\numberwithin{equation}{section}
\newtheorem{theorem}{Theorem}[section]
\newtheorem{definition}[theorem]{Definition}
\newtheorem{lemma}[theorem]{Lemma}
\newtheorem{proposition}[theorem]{Proposition}
\newtheorem{remark}[theorem]{Remark}
\newcommand{\R}{\mathbb{R}}
\newcommand{\N}{\mathbb{N}}
\newcommand{\ep}{\varepsilon}
\newcommand{\pa}{\partial}
\newcommand{\lr}[1]{{}\langle{}#1{}\rangle{}}
\title[Wave equation with space-dependent damping]{
Asymptotic expansion of solutions to
the wave equation with space-dependent damping
}
\author[M. Sobajima]{Motohiro Sobajima}
\address{Department of Mathematics, 
Faculty of Science and Technology, Tokyo University of Science,  
2641 Yamazaki, Noda-shi, Chiba, 278-8510, Japan}
\email{msobajima1984@gmail.com}
\author[Y. Wakasugi]{Yuta Wakasugi}
\address{Laboratory of Mathematics,
Graduate School of Engineering,
Hiroshima University,
Higashi-Hiroshima, 739-8527, Japan}
\email{wakasugi@hiroshima-u.ac.jp}
\begin{document}

\begin{abstract}
    We study the large time behavior
    of solutions to the wave equation
    with space-dependent damping
    in an exterior domain.
    We show that
    if the damping is effective,
    then the solution is
    asymptotically expanded 
    in terms of solutions of corresponding
    parabolic equations.
    The main idea to obtain the asymptotic
    expansion is the decomposition of
    the solution of the damped wave equation
    into the solution of
    the corresponding parabolic problem
    and the time derivative of the solution
    of the damped wave equation with certain
    inhomogeneous term and initial data.
    The estimate of the remainder term
    is an application of
    weighted energy methods
    with suitable supersolutions of the
    corresponding parabolic problem.
\end{abstract}

\keywords{wave equation, space-dependent damping, asymptotic expansion}
\subjclass[2020]{35L20; 35C20; 35B40}


\maketitle

\tableofcontents

\section{Introduction}
\subsection{Problem and backgrounds}
Let
$\Omega$ be an exterior domain with a  
smooth boundary $\pa\Omega$
in $\mathbb{R}^N$ with $N \ge 2$,
or $\Omega = \mathbb{R}^N$ with $N\ge 1$.
We consider the
initial-boundary value problem of
the wave equation with space-dependent damping
\begin{align}
\label{dwx}
    \left\{ \begin{array}{ll}
        \partial_t^2 u - \Delta u + a(x) \partial_t u = 0, &x \in \Omega, t>0,\\
        u(x,t) = 0, &x \in \partial \Omega, t>0,\\
        u(x,0) = u_0(x), \ 
        \partial_t u(x,0) = u_1(x),
        &x \in \Omega.
        \end{array} \right.
\end{align}
Here,
$u=u(x,t)$
is a real-valued unknown function,
and
$a(x)$ denotes the coefficient of the damping term.

We assume that
$a(x)$ is a smooth positive function on $\R^N$
having bounded derivatives
and
satisfying
\begin{align}
\label{a}
    \lim_{|x|\to \infty}
        |x|^{\alpha} a(x) = a_0
\end{align}
with some constants
$\alpha \in [0,1)$
and $a_0 > 0$.
Here, the precise meaning of \eqref{a} is
$\lim_{r\to \infty} \sup_{|x|>r} | |x|^{\alpha}a(x) -a_0 | = 0$,
that is,
the convergence is uniform in the direction.
In this case,
the damping is called effective,
and, as we will see later,
the asymptotic behavior of the solution
is closely related to a certain
corresponding parabolic problem.
Here, we remark that it is sufficient that
$a(x)$ is defined on $\bar{\Omega}$,
because we can extend it to $\R^N$
so that it has the same property as above.

The initial data
$(u_0, u_1)$
are assumed to belong to
$(H^2(\Omega)\cap H^1_0(\Omega))\times H^1_0(\Omega)$.
Then, it is known that \eqref{dwx}
admits a unique solution
\begin{align}%
\label{sol.class}
	u \in
	C([0,\infty); H^2(\Omega)) \cap
	C^1([0,\infty); H^1_0(\Omega)) \cap 
	C^2([0,\infty); L^2(\Omega))
\end{align}%
(see \cite[Theorem 2]{Ikawa}).
In our main result, we shall put stronger assumptions
on the data.

The aim of this paper is to prove the asymptotic
expansion of the solution as time tends to infinity.
In particular, we show that 
the solution $u$
is asymptotically expanded in terms of
a sequence of solutions to
corresponding parabolic equations
with certain inhomogeneous terms.

The asymptotic behavior of solutions to the damped wave equation has long history after a pioneering work by
Matsumura \cite{Ma76}.
He studied the Cauchy problem of the wave equation
with constant damping
\begin{align}\label{eq:intro:ldw}
    \partial_t^2 u - \Delta u + \partial_t u = 0,
    \quad (x,t) \in \mathbb{R}^N \times (0,\infty),
\end{align}
and applied the Fourier transform to obtain
the $L^{\infty}$ and $L^2$ estimates of solutions.
In particular, he showed that the decay rates
are the same as those of the corresponding heat equation
\begin{align}\label{eq:intro:h}
    \partial_t v - \Delta v = 0, \quad (x,t)\in \R^N\times (0,\infty).
\end{align}
After that,
the precise asymptotic profile of solutions
were studied by
Hsiao and Liu \cite{HsLi92} for
the hyperbolic conservation laws with damping,
and
by Karch \cite{Ka00}
and
by Yang and Milani \cite{YaMi00}
for \eqref{eq:intro:ldw},
and the so-called \textit{diffusion phenomena}
was proved, that is,
the solution $u$ of \eqref{eq:intro:ldw} is
asymptotically approximated by a solution of
the heat equation \eqref{eq:intro:h}
as time tends to infinity.
More detailed asymptotic behavior was studied
by many mathematicians,
and we refer the reader to
\cite{HoOg04, MaNi03, Na04, Ni03MathZ, SaWa17}
for the asymptotic behavior involving
the decomposition of solution into
the heat-part and wave-part.

For the higher order asymptotic expansions
of the Cauchy problem of \eqref{eq:intro:ldw},
Gallay and Raugel \cite{GaRa98}
determined the second order expansion when $N=1$
by the method of scaling-variables.
Moreover, by Fourier transform,
Takeda \cite{Ta15} studied the case
$N\le 3$ and obtained the expansion of any order
in terms of the Gaussian.
Michihisa \cite{Mi21} also gave another expression of
expansion for any $N\ge 1$
by the Fourier transform method.

On the other hand,
the asymptotic behavior of solutions to
the initial-boundary value problem of
the wave equation with constant damping
in an exterior domain
is also well-studied.
This problem firstly studied by
Ikehata \cite{Ik02},
and he proved the diffusion phenomena, that is, 
the asymptotic profile of solution to the extorior problem 
is given by the exterior heat semigroup with Dirichlet boundary condition.
After that, this result was extended by
Ikehata and Nishihara \cite{IkNi03},
Chill and Haraux \cite{ChHa03},
and Radu, Todorova, and Yordanov \cite{RaToYo11}
to the abstract problem
\begin{align}\label{eq:intro:abst:dw}
    u''(t) + Au(t) + u'(t) = 0, \quad t>0,
\end{align}
where $A$ is a nonnegative self-adjoint operator
in a Hilbert space.
Recently, the first author \cite{So20MA}
proved the higher order asymptotic expansion of the solution
to \eqref{eq:intro:abst:dw}
in terms of the solutions of
the corresponding first order equation.
Radu, Todorova, and Yordanov \cite{RaToYo16}
studied the diffusion phenomena for
more general abstract equation
\begin{align}
    C u''(t) + Bu(t) + u'(t) = 0, \quad t>0
\end{align}
with a nonnegative self-adjoint operator $B$
and a positive bounded operator $C$
by the method of diffusion approximation.
Nishiyama \cite{Nis16} also studied a similar problem
by the method of resolvent estimates.

We also refer the reader to
Wirth \cite{Wi04, Wid, Wi06, Wi07JDE, Wi07ADE},
Yamazaki \cite{Ya06},
and \cite{Wa19ASPM},
for the diffusion phenomena
of the wave equation with time dependent damping.

For the initial-boundary value problem of the
wave equation with space-dependent damping \eqref{dwx},
under the assumption of \eqref{a},
it is expected that the damping is classified in the following way:
\begin{itemize}
    \item (scattering) When $\alpha > 1$,
    the solution behaves like that of the wave equation without damping.
    \item (effective) When $\alpha <1$,
    the solution behaves like that of the
    corresponding heat equation.
     \item (critical) When $\alpha = 1$,
    the equation is formally invariant under
    hyperbolic scaling and the
    behavior of the solution may also depend on the constant $a_0$.
\end{itemize}
The scattering case $\alpha >1$
when $\Omega = \mathbb{R}^N \ (N\neq 2)$
or $\Omega \subset \mathbb{R}^N$ is an exterior domain
$(N\ge 3)$
was
studied by Mochizuki \cite{Mo76},
Mochizuki and Nakazawa \cite{MoNa96},
and Matsuyama \cite{Mat02},
and they proved that
there exist initial data such that
the energy of the corresponding solution
does not decay to zero,
and it approached a solution of the wave equation
without damping in the energy norm.
The case $N=2$ seems still open.

The critical case $\alpha =1$
with the assumption on $a(x)$ replaced by
$b_0 \langle x \rangle^{-1} \le a(x) \le b_1 \langle x \rangle^{-1} \ (b_0, b_1>0)$
was studied by
Ikehata, Todorova and Yordanov \cite{IkToYo13}.
They proved that,
when $\Omega = \mathbb{R}^N$ with $N\ge 3$ and the initial data are in $C_0^{\infty}(\mathbb{R}^N)$,
the energy of the solution decays as
$O(t^{-b_0})$ if $1<b_0<N$,
and
$O(t^{-N+\delta})$ with arbitrary small $\delta >0$
if $b_0 \ge N$.
Moreover, the decay rate
$O(t^{-b_0})$
when $1<b_0 < N$
is optimal under some additional assumptions
on $a(x)$.
A similar result was also obtained for the case
$N \le 2$.
This indicates that the behavior of the solution
depends on the coefficient $b_0$.

For the effective case $\alpha <1$,
Todorova and Yordanov \cite{ToYo09}
developed a weighted energy method
with an exponential-type weight function
\begin{align}
    \exp \left( m(a) \frac{A(x)}{t} \right),
\end{align}
which is a refinement of the method by
Ikehata \cite{Ik05IJPAM}.
Here, $A(x)$ is a solution of the Poission equation
$\Delta A(x) = a(x)$ satisfying
$A(x) \sim \langle x \rangle^{2-\alpha}$,
and
$m(a) = \liminf_{|x|\to \infty} \frac{a(x)A(x)}{|\nabla A(x)|^2}$.
They showed that if
$\Omega = \mathbb{R}^N \ (N\ge 1)$,
$a(x)$
is radially symmetric and satifies \eqref{a} with some
$\alpha \in [0,1)$ and $a_0>0$,
and the initial data belong to
$C_0^{\infty}(\mathbb{R}^N)$,
then we have
$m(a) = \frac{N-\alpha}{2-\alpha}$,
and the following estimates hold:
\begin{align}
    \| u(t) \|_{L^2} &\le C_\delta 
    			(1+t)^{-\frac{N-\alpha}{2(2-\alpha)}+\frac{\alpha}{2(2-\alpha)}+\delta},\\
    \| (\partial_t u(t), \nabla u(t)) \|_{L^2}
    &\le C_\delta  
    			(1+t)^{-\frac{N-\alpha}{2(2-\alpha)} -\frac{1}{2} +\delta},
\end{align}
where $\delta > 0$ is an arbitrary small loss of decay.
Radu, Todorova, and Yordanov \cite{RaToYo09, RaToYo10}
studied the energy decay of higher order derivatives
and extended the result to
more general second-order hyperbolic equations.
The assumption of the radial symmetry on $a(x)$
was removed by the authors \cite{SoWa17_AIMS}
by modifying the function $A(x)$ above.
Moreover, the authors \cite{Wa14JHDE, SoWa16_JDE, SoWa18_ADE}
proved the diffusion phenomena
in the case of $\alpha \in (-\infty,1)$ and exterior domains. 
The asymptotic profile of solution $u$ is given by the corresponding 
parabolic problem 
\begin{align}
\label{hx}
    \left\{ \begin{array}{ll}
        a(x) \partial_t V_0 - \Delta V_0 = 0, &x \in \Omega, t>0,\\
        V_0(x,t) = 0, &x \in \partial \Omega, t>0,\\
        V_0(x,0) = u_0(x) + a(x)^{-1} u_1(x),
        &x \in \Omega.
        \end{array} \right.
\end{align}

Recently, the authors \cite{SoWa19_CCM, SoWa21_JMSJ}
developed
a
different kind of weighted energy method 
applicable to a wider class of initial data 
including polynomially decaying functions. 
Roughly speaking, the suggested weight functions 
form the inverse of the self-similar solutions $\Phi_{\beta}$ of 
the equation $|x|^{-\alpha}\partial_t \Phi - \Delta \Phi = 0$
given by 
\begin{align}
    \Phi_{\beta} (x,t) = t^{-\beta} \varphi_{\beta} (\xi(x,t)),
\end{align}
where $\beta \in [0,\frac{N-\alpha}{2-\alpha})$ is a parameter, 
\begin{align}
    \varphi_{\beta} (z)
    = e^{-z}
    M \left( \frac{N-\alpha}{2-\alpha} - \beta, \frac{N-\alpha}{2-\alpha}; z \right),\quad
    \xi(x,t) = \frac{|x|^{2-\alpha}}{(2-\alpha)^2t}
\end{align}
with 
the Kummer confluent hypergeometric function 
$M(b,c;z)$ (see Section 2 for the precise definition). 
Moreover, the relation between the order of the weight
of initial data and the decay rates of the solution
was revealed. 
It is worthly noticing that these weight functions 
have a polynomial growth which enables us 
to take initial data having a polynomial decay, 
and the endpoint $\beta=\frac{N-\alpha}{2-\alpha}$ 
provides the exponential type solution 
\[
\Phi_{\frac{N-\alpha}{2-\alpha}}(x,t)
=t^{-\frac{N-\alpha}{2-\alpha}}\exp\left(-\frac{|x|^{2-\alpha}}{(2-\alpha)t}\right)
\]
which corresponds to the exponential type weight function introduced in \cite{ToYo09}. 

As mentioned above, the sharp decay estimates of solutions and
the diffusion phenomena
for the effective case $\alpha < 1$
is now known very well.
In contrast,
the higher order asymptotic expansion of the solution remains open.

Here, we mention a result by
Orive, Zuazua, and Pazoto \cite{OrZuPa01}
and Joly and Royer \cite{JoRo18}
for periodic and asymptotically periodic coefficient cases 
from different aspects.
For the exterior problem with decaying damping such as \eqref{dwx}, 
it seems difficult to apply the Fourier analysis 
which is the strong tool for the whole space case, 
and to apply the spectral analysis because of 
the appearance of unbounded diffusion operators 
and non-comutativity. 

In the present paper we introduce a new method (inspired by \cite{So20MA})
to reach the asymptotic expansion 
in terms of solutions of the corresponding 
parabolic equation with certain inhomogeneous terms
(see a description of the idea in Subsection 1.3).

\subsection{Main result}

The following is our main result
which desribes the higher order asymptotic expansion
of the solution $u$
of \eqref{dwx}. 
\begin{theorem}\label{thm:asym}
Let $n$ be a nonnegative integer.
Assume \eqref{a}
for some 
$\alpha \in [0,1)$
and
$a_0>0$.
If 
$n+1 < \frac{N-\alpha}{2\alpha}$
and
$\lambda \in [\frac{2\alpha}{2-\alpha}(n+1), \frac{N-\alpha}{2-\alpha})$,
then
there exist a positive integer
$s = s(n)$
and a constant
$m = m(n,\alpha,\lambda) >0$
such that the following holds:
Suppose the initial data
$u_0$ and $u_1$ satisfy
\begin{align}
\label{ass:thm:asym}
    u_0 \in H^{s+1,m}(\Omega)\cap H_0^{s,m}(\Omega),\quad
    u_1 \in H^{s,m}_0(\Omega).
\end{align}
Then there exist profiles $\widetilde{V}_1 \ldots, \widetilde{V}_n\in C([0,\infty);L^2(\Omega))$ 
and a positive constant $C$ such that 
the solution $u$ of \eqref{dwx} satisfies 
\begin{align}\label{eq:thm:asymp}
    \left\| u(t) -V_0(t) -\sum_{j=1}^{n} \widetilde{V}_j(t)\right\|_{L^2(\Omega)} 
    \le C (1+t)^{-\frac{\lambda}{2} - \frac{(2n+1)(1-\alpha)}{2-\alpha}
    +\frac{\alpha}{2(2-\alpha)}}
\end{align}
for $t > 0$, where $V_0$ is given by \eqref{hx}. 
Moreover, the profiles are successively determined as $\widetilde{V}_j=\pa_t^jV_{j}$
with the unique solutions $V_{j}$ of 
\begin{align}\label{eq:intro:V_j}
     \left\{ \begin{array}{ll}
       a(x) \partial_t V_j - \Delta V_j = - \partial_t V_{j-1}, &x \in \Omega, t>0,\\
       V_j(x,t) = 0, & x \in \partial \Omega, t>0,\\
        V_j(x,0) = - (-a(x))^{-j-1} u_1(x),
        &x \in \Omega.
        \end{array} \right.
\end{align}
for $j=1, \ldots, n$. 
\end{theorem}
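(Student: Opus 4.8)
The plan is to derive \eqref{eq:thm:asymp} from an exact decomposition of $u$ together with one weighted energy estimate. Set $L:=\partial_t^2-\Delta+a(x)\partial_t$ and $P:=a(x)\partial_t-\Delta$, let $V_0,\dots,V_n$ be the solutions of \eqref{hx} and \eqref{eq:intro:V_j}, put $z_0:=u$, and define recursively $z_{j+1}(x,t):=\int_0^t(z_j-V_j)(x,s)\,ds$ for $j=0,\dots,n$, so that $z_j=V_j+\partial_t z_{j+1}$ and $z_{j+1}(\cdot,0)=0$ by construction. Iterating $z_j=V_j+\partial_t z_{j+1}$ gives $u=V_0+\sum_{j=1}^{n}\widetilde V_j+\partial_t^{n+1}z_{n+1}$ with $\widetilde V_j=\partial_t^{j}V_j$, so it remains to identify the equation solved by $z_{j+1}$ and to estimate the tail $\partial_t^{n+1}z_{n+1}$. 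A short induction gives the former: in $Lz_{j+1}$ one rewrites $\Delta z_j$ and $\Delta V_j$ through the equations $Lz_j=-\partial_t V_{j-1}$ (with $Lz_0=0$, $V_{-1}:=0$) and \eqref{eq:intro:V_j}, integrates once in time, and finds that the time-dependent contributions — both governed by $\partial_t V_{j-1}$ — cancel, leaving a time-independent residual which vanishes precisely when $V_j(\cdot,0)=z_j(\cdot,0)+a^{-1}\partial_t z_j(\cdot,0)$; unwinding this recursion (with $z_0=u$ and $z_j(\cdot,0)=0$ for $j\ge1$) produces exactly the data $V_0(\cdot,0)=u_0+a^{-1}u_1$, $V_j(\cdot,0)=-(-a)^{-j-1}u_1$ of the theorem, and then $Lz_{j+1}=-\partial_t V_j$ in $\Omega$ with the Dirichlet condition, $z_{j+1}(\cdot,0)=0$, $\partial_t z_{j+1}(\cdot,0)=(-a)^{-j-1}u_1$.

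For the decay of the profiles I would use the sharp weighted estimates for the degenerate parabolic equation $|x|^{-\alpha}\partial_t\Phi-\Delta\Phi=0$ developed in \cite{SoWa19_CCM,SoWa21_JMSJ} from the self-similar solutions $\Phi_\beta$: they give, for the solution of \eqref{hx}, $\|\partial_t^{k}V_0(t)\|_{L^2}\lesssim(1+t)^{-\lambda/2+\frac{\alpha}{2(2-\alpha)}-k\frac{2(1-\alpha)}{2-\alpha}}$, and, solving \eqref{eq:intro:V_j} by Duhamel's formula — its homogeneous part obeying the same bound once the weighted data $V_j(\cdot,0)$ is under control, its inhomogeneous part, driven by the already-estimated source $-\partial_t V_{j-1}$, being no larger — the same bound for each $\partial_t^{k}V_j$. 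In particular $\|\widetilde V_j(t)\|_{L^2}\lesssim(1+t)^{-\lambda/2+\frac{\alpha}{2(2-\alpha)}-j\frac{2(1-\alpha)}{2-\alpha}}$, so the profiles are of strictly decreasing order and \eqref{eq:thm:asymp} is a genuine asymptotic expansion, while the forcing $\partial_t^{\ell+1}V_n$ appearing below is controlled, in a suitable weighted $L^2$ norm, by $(1+t)^{-\lambda/2+\frac{\alpha}{2(2-\alpha)}-(\ell+1)\frac{2(1-\alpha)}{2-\alpha}}$. Getting the weighted data $V_j(\cdot,0)\sim|x|^{(j+1)\alpha}u_1$ under control is what forces the weight $m=m(n,\alpha,\lambda)$ to be large.

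The analytic core is the estimate of the remainder $\partial_t^{n+1}z_{n+1}$. Since $\partial_t^{\ell}z_{n+1}$ solves $L(\partial_t^{\ell}z_{n+1})=-\partial_t^{\ell+1}V_n$ with initial data read off above, I would run, successively for $\ell=0,1,\dots,n+1$, the weighted energy method of \cite{ToYo09,SoWa17_AIMS,SoWa21_JMSJ}: multiply this equation by $\partial_t(\partial_t^{\ell}z_{n+1})$ times a weight built from a supersolution of the associated parabolic problem — essentially a power of the reciprocal of a self-similar solution $\Phi_\beta$ with $\beta$ tuned to $\lambda$ — integrate over $\Omega$, use the supersolution inequality to absorb the indefinite gradient cross terms, and derive a differential inequality for the weighted energy whose forcing is controlled by the weighted norm of $\partial_t^{\ell+1}V_n$ from the previous step. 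Integrating this inequality and using the comparison between the weighted energy and $\|\partial_t^{n+1}z_{n+1}(t)\|_{L^2}$ then gives the rate $(1+t)^{-\lambda/2-\frac{(2n+1)(1-\alpha)}{2-\alpha}+\frac{\alpha}{2(2-\alpha)}}$, which with the decomposition is \eqref{eq:thm:asymp}.

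The hard part will be this last estimate. Because the damping decays only polynomially when $\alpha>0$, the comparison weight grows polynomially rather than exponentially, so absorbing the cross terms is far more delicate than in the classical effective regime and the weight's growth rate has to be matched precisely to the target exponent $\lambda$; moreover the source $\partial_t^{\ell+1}V_n$ is controlled only together with a spatial weight, which must be made compatible with the weight inside the energy. A secondary but unavoidable difficulty is the regularity bookkeeping: reaching $\partial_t^{n+1}z_{n+1}$ forces the whole scheme to be carried out for $\partial_t^{\ell}z_{n+1}$ with $\ell\le n+1$, and hence requires higher weighted regularity of $(u_0,u_1)$ and of all the $V_j$ (with additional elliptic regularity up to $\partial\Omega$). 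This, together with the weight growth just mentioned, is what pins down the index $s=s(n)$, the weight $m=m(n,\alpha,\lambda)$, and the loss-free range of $\lambda$ — equivalently, the constraint $n+1<\frac{N-\alpha}{2\alpha}$.
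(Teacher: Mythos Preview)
Your approach coincides with the paper's: your $z_{j+1}$ are exactly the paper's auxiliary functions $U_{j+1}$, the decomposition $u=\sum_{j=0}^n\partial_t^jV_j+\partial_t^{n+1}U_{n+1}$ is identical, and the overall strategy (weighted parabolic estimates for the $V_j$, then weighted hyperbolic energy for the tail) is the same. Two points in your write-up deserve correction, though.

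First, for the parabolic profiles the paper does not use Duhamel but a direct weighted energy argument (its Theorem~4.1 / Lemmas~4.3--4.4). The reason matters: what the hyperbolic step actually consumes is the \emph{time-integrability}
\[
\int_0^\infty\!\!\int_\Omega a(x)^{-1}\bigl|\partial_t^{j+1}V_n\bigr|^2\,\Psi^{\lambda_{n+1}+1+2j}\,dx\,dt<\infty,
\]
not the pointwise-in-$t$ decay you state; this $L^1_t$ bound drops out of the energy computation (it is the dissipated quantity) but would require extra work from a Duhamel representation.

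Second, the hyperbolic energy estimate cannot be closed with the single multiplier $\partial_t(\partial_t^\ell z_{n+1})\cdot(\text{weight})$ you describe. That multiplier produces the good term $-\int a|\partial_t w|^2\Psi^{\lambda+1+2\ell}$ but also a bad $+C\int|\nabla w|^2\Psi^{\lambda+2\ell}$ from differentiating the growing weight, and the supersolution inequality alone does not kill it. The paper pairs this with a second multiplier $w\cdot\Phi_{\beta,\varepsilon}^{-1+2\delta}$ and invokes the pointwise Hardy-type inequality (Lemma~\ref{lem.deltaphi}) together with $a\,\partial_t\Phi_{\beta,\varepsilon}-\Delta\Phi_{\beta,\varepsilon}\ge c\,a\,\Psi^{-\beta-1}$ to generate a matching good term $-\eta\int|\nabla w|^2\Psi^{\lambda}$; the estimate closes only after forming $\nu E_1+E_0$ with $\nu$ small and $t_0$ large. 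Since you cite \cite{ToYo09,SoWa21_JMSJ} this is presumably what you have in mind, but as written the mechanism is missing.
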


\begin{remark}
For each $V_j$
$(j=0, 1, \ldots, n)$,
we also have
\begin{align}
    \| \partial_t^j V_j (t) \|_{L^2(\Omega)}
    \le C (1+t)^{-\frac{\lambda}{2} - \frac{2j(1-\alpha)}{2-\alpha}
    +\frac{\alpha}{2(2-\alpha)}}
\end{align}
(see Section 6). We remark that
when $a(x) \equiv 1$,
the expansion in Theorem \ref{thm:asym} coincides with
the known result \cite{So20MA}. 
\end{remark}

\begin{remark}
One can also represent the profiles $\widetilde{V}_j$ for $j=1,\ldots n$ in terms of 
the semigroup $e^{tL}$ generated by $L=a^{-1}\Delta$. 
For instance, the second profile $\widetilde{V}_1$ can be written as 
\[
\widetilde{V}_1(t)
=
-Le^{tL}[a^{-2}u_1]
-a^{-1}Le^{tL}[u_0+a^{-1}u_1]
-\int_0^tLe^{(t-s)L}\Big[a^{-1}Le^{sL}[u_0+a^{-1}u_1]\Big]\,ds.
\]
If $a\equiv 1$, then $e^{t\Delta}$ and $a^{-1}$ commutes, and therefore, 
the above description can be simplified to 
$
\widetilde{V}_1(t)
=-\Delta e^{t\Delta}u_1-\Delta (1+t\Delta)e^{t\Delta }[u_0+u_1]$
as in \cite{So20MA}, 
but the semigroup $e^{tL}$ and $a^{-1}$ do not commute in general. 
To avoid such a complicated situation, we have chosen 
the parabolic equations \eqref{eq:thm:asymp} for the determinination of the profiles $\widetilde{V}_j$.
\end{remark}

\begin{remark}
\textup{(i)}
About the explicit values of $s = s(n)$ and $m=m(n,\alpha,\lambda)$
in Theorem \ref{thm:asym},
a rough computation shows that we can take
$s = 5(n+1)$ and $m= (\lambda+2n+1)\frac{2-\alpha}{2} + (6n^2 + 14n+8)\alpha$.
However, we omit the detailed computation, and
do not discuss the optimality of them here.

\noindent
\textup{(ii)}
If $u_0, u_1 \in C_0^{\infty}(\Omega)$,
then the assumptions on the initial data
of Theorem \ref{thm:asym} are
automatically fulfilled.
\end{remark}

\subsection{A rough descripsion of strategy}

By the previous studies
\cite{Wa14JHDE, SoWa16_JDE, SoWa21_JMSJ},
the solution of \eqref{hx}
is known to be the first asymptotic profile of
the solution of \eqref{dwx}.
To investigate the asymptotic behavior of solution
to \eqref{dwx}, we follow the idea of
\cite{So20MA}.
First, the fact that
$V_0$ is the first asymptotic profile
implies that
$u - V_0$
is a remainder term.
In \cite{So20MA}, it is found that
the remainder term
$u-V_0$
can be expressed as the
time derivative of the solution of
the damped wave equation with
a
certain inhomogeneous term.
More precisely,
let $U_1$ be the solution of
\begin{align*}
    \left\{ \begin{array}{ll}
        \partial_t^2 U_1 - \Delta U_1 + a(x) \partial_t U_1 = - \partial_t V_0, &x \in \Omega, t>0,\\
        U_1(x,t) = 0, &x \in \partial \Omega, t>0,\\
        U_1(x,0) = 0, \ 
        \partial_t U_1(x,0) = - a(x)^{-1} u_1(x),
        &x \in \Omega.
        \end{array} \right.
\end{align*}
Next, we have the decomposition
$u = V_0 + \partial_t U_1$ (see Lemma \ref{lem_eq_U}).
Then we further consider the asymptotic profile of $U_1$. 
By experience, 
it is natural to choose $V_1$ via \eqref{eq:thm:asymp} with $n=1$ 
($V_1$ and $U_1$ has the same inhomogeneous term in respective equations). 
Then, in a similar way $U_1$ can also be also decomposed as $U_1=V_1+\pa_tU_2$ 
with the second auxiliary function $U_2$ via
\begin{align*}
    \left\{ \begin{array}{ll}
        \partial_t^2 U_{2} - \Delta U_{2} + a(x) \partial_t U_{2} = - \partial_t V_1, &x \in \Omega, t>0,\\
        U_{2}(x,t) = 0, & x \in \partial \Omega, t>0,\\
        U_{2}(x,0) = 0, \ 
        \partial_t U_{2}(x,0) = (-a(x))^{-2} u_1(x),
        &x \in \Omega.
        \end{array} \right.
\end{align*}
The relation 
\[
u=V_0+\pa_tU_1=V_0+\pa_tV_1+\pa_t^2U_2
\]
can be expected to determine the second expansion. 
Continuously, using the $(n+1)$-th auxiliary function $U_{n+1}$ given by 
\begin{align}
\label{eq:intro:U_n+1}
    \left\{ \begin{array}{ll}
        \partial_t^2 U_{n+1} - \Delta U_{n+1} + a(x) \partial_t U_{n+1} = - \partial_t V_n, &x \in \Omega, t>0,\\
        U_{n+1}(x,t) = 0, & x \in \partial \Omega, t>0,\\
        U_{n+1}(x,0) = 0, \ 
        \partial_t U_{n+1}(x,0) = (-a(x))^{-n-1} u_1(x),
        &x \in \Omega,
        \end{array} \right.
\end{align}
one can obtain the relation 
\begin{align}
\label{eq:intro:u_decomp}
    u = V_0 + \partial_t V_1
        + \partial_t^2 V_2 + \cdots
        + \partial_t^n V_n
        + \partial_t^{n+1} U_{n+1}.
\end{align}
More precise discussion will be given in
Section 3.
Note that even if the initial data $(u_0,u_1)$ are compactly supported, 
$V_0,\ldots, V_n$ and $U_{n+1}$ do not have 
compact supports in general.
Therefore the finite propagatoin property does not work in this situation.
Applying a weighted energy method developed by
the authors' previous papers
\cite{So19DIE, SoWa21_JMSJ},
we prove that $\partial_t^{n+1} U_{n+1}$ decays faster than the other terms in \eqref{eq:intro:u_decomp},
and this implies that the solution
$u$
is asymptotically expanded by
the sum of
$V_0, \partial_t V_1, \ldots, \partial_t^n V_n$.

\subsection{Construction of the paper}
This paper is constructed as follows.
In the next section,
we prepare the weight functions
used in the energy method in subsequent sections.
In section 3,
we state the well-posedness and regularity
of solutions of the problem \eqref{dwx}
and discuss the validity of the 
decomposition \eqref{eq:intro:u_decomp} (formally explained in Subsection 1.3) 
in a suitable weighted Sobolev space.
In Section 4,
we discuss the weighted energy estimates for
the corresponding parabolic equations.
In Section 5,
we prove the weighted energy estimates for
the damped wave equation \eqref{dwx} with an inhomogeneous term.
Finally, in Section 6,
we complete the proof of Theorem \ref{thm:asym}
by adapting the energy estimates
prepared in Sections 4 and 5
to the original problem \eqref{dwx}.

\subsection{Notations}
We finish this section with some notations used throughout this paper.
The letter $C$ indicates a generic positive constant,
which may change from line to line.
We also express constants by
$C(\ast, \ldots, \ast)$,
which means this constant depends on the parameters in the parenthesis.
The symbol
$f \lesssim g$
stands for
$f\le Cg$
holds with some constant
$C>0$,
and
$f \sim g$ means
both $f \lesssim g$ and $g \lesssim f$ hold.

We denote
$\langle x \rangle = \sqrt{1+|x|^2}$
for $x \in \mathbb{R}^n$.
Let
$L^2(\Omega)$
be the usual Lebesgue space
with the norm
\begin{align}
    \| f \|_{L^2(\Omega)}
    = \left( \int_{\Omega} |f(x)|^2 \,dx \right)^{1/2},
\end{align}
and
$C_0^{\infty}(\Omega)$
stands for the space of
infinitely differentiable functions with
compact support in $\Omega$.
For a nonnegative integer $k$ and $m \in \mathbb{R}$,
we introduce the weighted Sobolev spaces by
\begin{align}
\label{eq:L2}
    H^{k,m} (\Omega) &= \{ f: \Omega \to \mathbb{R} ;  \langle x \rangle^m \partial^{\alpha}_x f \in L^2(\Omega)
            \, \mbox{for any}\,
            \alpha \in \mathbb{Z}_{\ge 0}^N
            \, \mbox{with}\, |\alpha| \le k \},\\
    \| f \|_{H^{k,m}(\Omega)}
    &=
    \sum_{|\alpha|\le k} \| \langle x \rangle^m \partial_x^{\alpha} f \|_{L^2(\Omega)},
\end{align}
where we used the notion of multi-index
and the derivatives are in the sense of
distribution.
When
$m=0$,
we denote $H^{k}(\Omega) = H^{k,0}(\Omega)$
for short.
Also,
$H^{k,m}_0(\Omega)$
is the completion of
$C_0^{\infty}(\Omega)$
with respect to the norm
$\| f \|_{H^{k,m}(\Omega)}$.

\section{Preliminaries}
\subsection{Weight functions}

Throughout this section, we slightly generalize the conditions on
$a(x)$
and assume that
$a(x)$
is a smooth positive function on
$\mathbb{R}^N$
satisfying
\begin{align}
\label{a:sec2}
    \lim_{|x|\to \infty}
        |x|^{\alpha} a(x) = a_0
\end{align}
with some constants
$\alpha\in (-\infty,\min\{2,N\})$
and $a_0 > 0$.

We prepare weight functions constructed in \cite{SoWa21_JMSJ}.
First, we introduce a suitable approximate solution of
the Poisson equation
$\Delta A(x) = a(x)$.

\begin{lemma}[{\cite[Lemma 2.1]{SoWa17_AIMS}, \cite[Lemma 3.2]{SoWa21_JMSJ}}]\label{lem_A_ep}
Let $a(x)$ be a smooth positive function on $\R^N$
satisfying the condition \eqref{a:sec2} with some constants
$\alpha\in (-\infty,\min\{2,N\})$
and $a_0 > 0$.
Then for every $\ep \in (0,1)$, 
there exist a function $A_\ep\in C^2(\R^N)$ 
and positive constants $c_\ep$ and $C_\ep$ such that
\begin{align}
\label{A1}
	&(1-\ep)a(x)\leq \Delta A_\ep (x) \leq (1+\ep) a(x),\\
\label{A2}
	&c_\ep \lr{x}^{2-\alpha} \leq A_\ep(x) \leq C_\ep \lr{x}^{2-\alpha},\\
\label{A3}
	&\frac{|\nabla A_\ep(x)|^2}{a(x)A_\ep(x)}\leq \frac{2-\alpha}{N-\alpha}+\ep
\end{align}
hold for $x\in \mathbb{R}^N$.
\end{lemma}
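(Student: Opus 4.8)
We outline the strategy we would use to prove Lemma~\ref{lem_A_ep}. The plan is to perturb the explicit radial model $h(x)=b\langle x\rangle^{2-\alpha}$ with $b=\frac{a_0}{(2-\alpha)(N-\alpha)}$, which is admissible because $\alpha<\min\{2,N\}$ forces $2-\alpha>0$ and $N-\alpha>0$. A direct computation gives $\nabla h=b(2-\alpha)\langle x\rangle^{-\alpha}x$ and $\Delta h=b(2-\alpha)\big(N\langle x\rangle^{-\alpha}-\alpha|x|^2\langle x\rangle^{-\alpha-2}\big)$, which is strictly positive on $\R^N$ (since $N\langle x\rangle^2>\alpha|x|^2$ whenever $\alpha<N$) and satisfies $\Delta h(x)/a(x)\to1$ as $|x|\to\infty$, by \eqref{a:sec2} and the choice of $b$. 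Hence for each $\ep\in(0,1)$ there is $R=R(\ep)$ with $(1-\ep)a\le\Delta h\le(1+\ep)a$ on $\{|x|\ge R\}$.

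To repair the Laplacian inside the ball $B_R$, fix a cutoff $\psi\in C_0^\infty(B_{2R})$ with $\psi\equiv1$ on $B_R$ and set $A_\ep=h+v+C_0$, where $v\in C^\infty(\R^N)$ is chosen so that $\Delta v=(a-\Delta h)\psi$ (e.g.\ by convolving the compactly supported smooth function $(a-\Delta h)\psi$ with the fundamental solution of the Laplacian), and $C_0>0$ is a large constant fixed only at the end. Then $\Delta A_\ep=\psi a+(1-\psi)\Delta h$, which equals $a$ on $B_R$, equals $\Delta h$ outside $B_{2R}$, and on the annulus is a convex combination of $a$ and $\Delta h$, both of which lie in $[(1-\ep)a,(1+\ep)a]$ there since $B_{2R}\setminus B_R\subset\{|x|\ge R\}$; this gives \eqref{A1} on all of $\R^N$. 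For \eqref{A2} one uses that $v$ is smooth and of lower order than $\langle x\rangle^{2-\alpha}$ at infinity (it is $O(|x|^{2-N})$ for $N\ge3$, $O(\log|x|)$ for $N=2$, and $O(|x|)$ for $N=1$, all $o(\langle x\rangle^{2-\alpha})$ because $\alpha<N$, and $2-\alpha>1$ when $N=1$), while $h\sim b\langle x\rangle^{2-\alpha}$; together with $h\ge b>0$ and $C_0$ large this squeezes $A_\ep$ between two positive multiples of $\langle x\rangle^{2-\alpha}$.

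It remains to establish the sharp bound \eqref{A3}. Writing $|\nabla A_\ep|^2=|\nabla h|^2+2\nabla h\cdot\nabla v+|\nabla v|^2$, the factor $\nabla v$ decays (and is merely bounded when $N=1$), so the last two summands are of lower order than $|\nabla h|^2\sim b^2(2-\alpha)^2\langle x\rangle^{2-2\alpha}$ as $|x|\to\infty$; since also $aA_\ep\sim a_0 b\langle x\rangle^{2-2\alpha}$, this yields $\frac{|\nabla A_\ep|^2}{aA_\ep}\to\frac{b(2-\alpha)^2}{a_0}=\frac{2-\alpha}{N-\alpha}$ as $|x|\to\infty$, so \eqref{A3} holds on some exterior region $\{|x|\ge R'\}$ with $R'=R'(\ep)$. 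On the compact set $\overline{B_{R'}}$, $|\nabla A_\ep|$ is bounded and $a$ is bounded below, while $A_\ep\ge C_0-\max_{\overline{B_{R'}}}|v|$, so choosing $C_0=C_0(\ep)$ large enough forces $\frac{|\nabla A_\ep|^2}{aA_\ep}\le\ep$ there; enlarging $C_0$ changes neither $\nabla A_\ep$ nor $\Delta A_\ep$ and does not affect the asymptotics just used, so the chain of choices $\ep\rightsquigarrow R\rightsquigarrow(\psi,v)\rightsquigarrow R'\rightsquigarrow C_0$ is not circular. The main obstacle is exactly this last point: upgrading \eqref{A3} from an asymptotic statement to a \emph{global} one with the sharp constant $\frac{2-\alpha}{N-\alpha}$, handled by splitting into $\{|x|\ge R'\}$ and $\overline{B_{R'}}$ and absorbing the bad compact region into the large additive constant $C_0$; by comparison, the interior correction of $\Delta A_\ep$ via a compactly supported Poisson problem is routine.
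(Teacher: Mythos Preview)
The paper does not supply its own proof of this lemma; it is quoted verbatim from the authors' earlier works \cite{SoWa17_AIMS,SoWa21_JMSJ}, so there is nothing here to compare against directly. That said, your construction is correct and is precisely the kind of argument those references carry out: take the explicit radial model $b\langle x\rangle^{2-\alpha}$ (admissible because $\alpha<\min\{2,N\}$), repair $\Delta A_\ep$ on a compact set by solving a Poisson problem with compactly supported source, and finally add a large constant to force \eqref{A2} and \eqref{A3} globally.

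Two small comments. First, your growth bookkeeping for the correction $v$ is right in every dimension: the key point is that $|x|^{2-N}$, $\log|x|$, and $|x|$ are all $o(\langle x\rangle^{2-\alpha})$ exactly under the hypothesis $\alpha<N$, and similarly $|\nabla v|=O(|x|^{1-N})$, $O(|x|^{-1})$, $O(1)$ are $o(|\nabla h|)$ for the same reason. Second, the non-circularity check at the end is exactly the right observation: since enlarging $C_0$ leaves $\nabla A_\ep$ and $\Delta A_\ep$ unchanged and only \emph{increases} $A_\ep$, the ratio in \eqref{A3} can only go down, so the exterior estimate on $\{|x|\ge R'\}$ survives after $C_0$ is finally fixed. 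This monotonicity is what makes the ``add a large constant'' trick work for the sharp constant $\frac{2-\alpha}{N-\alpha}$ rather than merely some finite bound.
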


\begin{remark}
The above type function $A_{\ep}(x)$ was firstly introduced
by Ikehata \cite{Ik05IJPAM}, Todorova and Yordanov \cite{ToYo09}, and Nishihara \cite{Ni10}.
In particular, in \cite{ToYo09},
a solution of
$\Delta A(x) = a(x)$,
that is,
the equation obtained by taking
$\varepsilon = 0$
in \eqref{A1},
was applied for weighted energy estimates
for the damped wave equation \eqref{dwx} with radially symmetric $a(x)$.
Lemma \ref{lem_A_ep} is a refinement of the method of \cite{ToYo09} to remove
the assumption of radial symmetry on $a(x)$.
\end{remark}

The following definitions
are connected to the supersolution of
$a(x)v_t - \Delta v = 0$
constructed in \cite{SoWa21_JMSJ},
which plays a crucial role to obtain
several estimates verifying asymptotic expansion.

\begin{definition}[Kummer's confluent hypergeometric functions]
\label{def:M}
For
$b,c \in \mathbb{R}$ with $-c \notin \mathbb{N} \cup \{0\}$,
Kummer's confluent hypergeometric function of first kind is defined by
\begin{align*}
	M(b, c; s) = \sum_{n=0}^{\infty} \frac{(b)_n}{(c)_n} \frac{s^n}{n!}, \quad s\in [0,\infty),
\end{align*}
where $(d)_n$ is the Pochhammer symbol defined by 
$(d)_0 = 1$ and $(d)_n = \prod_{k=1}^n (d+k-1)$ for $n\in\N$; 
note that when $b=c$, $M(b,b;s)$ coincides with $e^s$.
\end{definition}

\begin{definition}\label{phi.beta}
\noindent
{\rm (i)}
    For $\varepsilon \in (0,1/2)$, we define
\begin{align}
\label{gammatilde}
	\widetilde{\gamma}_\ep=\left(\frac{2-\alpha}{N-\alpha}+2 \ep\right)^{-1}, \quad 
	\gamma_\ep=(1-2\ep)
	\widetilde{\gamma}_\ep.
\end{align}

\noindent
{\rm (ii)}
For
$\beta \ge 0$
and
$\varepsilon \in (0,1/2)$,
define 
\[
	\varphi_{\beta,\ep}(s)=e^{-s}M\left(\gamma_\ep-\beta, \gamma_\ep; s\right),
	\quad s\geq 0.
\]
\end{definition}

\begin{remark}
{\rm (i)}
We slightly modify the definition of
$\tilde{\gamma}_{\varepsilon}$
and
$\gamma_{\varepsilon}$
from those of \cite{SoWa21_JMSJ}
in order to gain a positive term
in the right-hand side of Proposition \ref{prop:super-sol} (iv).
This modification enables us to
unify the proof of energy estimates
for the case
$N=1$ and $N \ge 2$
(see Sections 4 and 5).

\noindent
{\rm (ii)}
We note that
$\varphi_{\beta,\varepsilon}(s)$
is a unique (modulo constant multiple) solution of
\begin{align}\label{eq:varphi}
    s \varphi''(s) + (\gamma_{\varepsilon} + s)\varphi'(s) + \beta \varphi(s) = 0
\end{align}
with bounded derivative near $s=0$.
\end{remark}

\begin{lemma}\label{lem:phi.beta}
The function $\varphi_{\beta,\ep}$ defined in Definition \ref{phi.beta} satisfies the following properties.
\begin{itemize}
    \item[(i)]
    If $0\le \beta < \gamma_{\ep}$, then
    $\varphi_{\beta,\ep}(s)$ satisfies the estimates
    \begin{align}
        k_{\beta,\ep} (1+s)^{-\beta} \le \varphi_{\beta,\ep} (s) \le K_{\beta,\ep} (1+s)^{-\beta}
    \end{align}
    with some constants
    $k_{\beta,\ep}, K_{\beta,\ep} > 0$.
    \item[(ii)]
    For every $\beta \ge 0$,
    the estimate
    \begin{align}
        |\varphi_{\beta,\ep}(s)| \le K_{\beta,\ep}(1+s)^{-\beta}
    \end{align}
    holds with some constant $K_{\beta,\ep} > 0$.
    \item[(iii)]
    For every $\beta \ge 0$,
    $\varphi_{\beta,\ep}(s)$ and $\varphi_{\beta+1,\ep}(s)$
    satisfy the recurrence relation
    \begin{align}
        \beta \varphi_{\beta,\ep}(s) + s \varphi_{\beta,\ep}'(s)
        = \beta \varphi_{\beta+1,\ep}(s).
    \end{align}
    \item[(iv)]
    If $0\le \beta < \gamma_{\ep}$, then we have
    \begin{align}
        \varphi_{\beta,\ep}'(s) &= - \frac{\beta}{\gamma_{\ep}} e^{-s} M(\gamma_{\ep}-\beta, \gamma_{\ep} + 1; s) \le 0,\\
        \varphi_{\beta,\ep}''(s) &= \frac{\beta(\beta+1)}{\gamma_{\ep}(\gamma_{\ep}+1)} e^{-s} M(\gamma_{\ep}-\beta, \gamma_{\ep} + 2; s) \ge 0.
    \end{align}
    \item[(v)]
    If
    $0\le \beta < \gamma_{\varepsilon}$,
    then
    $\varphi_{\beta,\varepsilon}'$
    satisfies
    \begin{align}
        -\varphi_{\beta,\varepsilon}'(s)
        \ge
        k_{\beta,\varepsilon} (1+s)^{-\beta-1}
    \end{align}
    holds with some constant
    $k_{\beta,\varepsilon} > 0$.
\end{itemize}
\end{lemma}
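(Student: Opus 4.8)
The plan is to verify the five properties directly from the explicit representation $\varphi_{\beta,\ep}(s) = e^{-s} M(\gamma_\ep - \beta, \gamma_\ep; s)$ together with standard identities for Kummer's function $M(b,c;s)$. First I would collect the elementary facts I need: the differentiation formula $\frac{d}{ds} M(b,c;s) = \frac{b}{c} M(b+1,c+1;s)$, Kummer's transformation $M(b,c;s) = e^{s} M(c-b,c;s)$, and the asymptotics $M(b,c;s) \sim \frac{\Gamma(c)}{\Gamma(b)} e^{s} s^{b-c}$ as $s\to\infty$ when $b > 0$ (and the fact that $M(b,c;s)$ is a polynomial when $-b \in \N\cup\{0\}$), plus $M(b,c;0) = 1$ and the fact that $M(b,c;s) > 0$ for $s \ge 0$ when $b, c > 0$. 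Note $\gamma_\ep > 0$ by \eqref{gammatilde}, and when $0 \le \beta < \gamma_\ep$ we have $\gamma_\ep - \beta > 0$.

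The order of the argument would be: (iv) first, since it is purely computational. Writing $\varphi_{\beta,\ep}' = -e^{-s}M(\gamma_\ep - \beta,\gamma_\ep;s) + e^{-s}\frac{\gamma_\ep-\beta}{\gamma_\ep}M(\gamma_\ep-\beta+1,\gamma_\ep+1;s)$ and applying the contiguous relation $M(b,c;s) - \frac{b}{c}M(b+1,c+1;s) = \frac{c-b}{c}M(b,c+1;s)$ (with $b = \gamma_\ep-\beta$, $c = \gamma_\ep$) collapses this to $-\frac{\beta}{\gamma_\ep}e^{-s}M(\gamma_\ep-\beta,\gamma_\ep+1;s)$, which is $\le 0$ since all arguments are positive. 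Differentiating once more the same way gives the stated second-derivative formula, and positivity again follows from positivity of $M$. Property (iii), the recurrence $\beta\varphi_{\beta,\ep} + s\varphi_{\beta,\ep}' = \beta\varphi_{\beta+1,\ep}$, is equivalent after inserting the definitions and using (iv) to a Kummer contiguous relation relating $M(\gamma_\ep-\beta,\gamma_\ep;s)$, $s M(\gamma_\ep-\beta,\gamma_\ep+1;s)$, and $M(\gamma_\ep-\beta-1,\gamma_\ep;s)$; alternatively one can verify it from the ODE \eqref{eq:varphi} by checking that both sides solve the same equation with the same initial behavior. I would present whichever is shorter.

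For the two-sided bound (i), the upper bound $\varphi_{\beta,\ep}(s) \le K_{\beta,\ep}(1+s)^{-\beta}$: on any compact interval $[0,s_0]$ continuity and positivity suffice, and for $s \to \infty$ the asymptotic $M(\gamma_\ep-\beta,\gamma_\ep;s) \sim \frac{\Gamma(\gamma_\ep)}{\Gamma(\gamma_\ep-\beta)} e^{s} s^{-\beta}$ (valid since $\gamma_\ep - \beta > 0$) gives $\varphi_{\beta,\ep}(s) \sim \frac{\Gamma(\gamma_\ep)}{\Gamma(\gamma_\ep-\beta)} s^{-\beta}$; gluing the two regimes yields the bound, and the same asymptotic gives the matching lower bound $k_{\beta,\ep}(1+s)^{-\beta}$ since $\varphi_{\beta,\ep}$ is continuous and strictly positive on $[0,\infty)$ (positivity of $M$ again). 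Property (ii) for general $\beta \ge 0$ (dropping $\beta < \gamma_\ep$): here $\gamma_\ep - \beta$ may be $\le 0$. If $\gamma_\ep - \beta \in -\N\cup\{0\}$ then $M(\gamma_\ep-\beta,\gamma_\ep;s)$ is a polynomial of degree $\beta - \gamma_\ep$, so $\varphi_{\beta,\ep}(s) = e^{-s}\cdot(\text{polynomial})$ decays faster than any power, in particular $\le K(1+s)^{-\beta}$. Otherwise $\gamma_\ep - \beta < 0$ is not a nonpositive integer, and the asymptotic $M(b,c;s) = \frac{\Gamma(c)}{\Gamma(b)}e^{s}s^{b-c}(1+O(1/s)) + \frac{\Gamma(c)}{\Gamma(c-b)}(-s)^{-b}(1+O(1/s))$ has leading term $\sim e^{s}s^{-\beta}$ still (the first term dominates), giving $\varphi_{\beta,\ep}(s) = O(s^{-\beta})$; for small $s$ we use continuity. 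The main technical care — the only place that needs attention rather than routine manipulation — is handling the case distinction in (ii) for $\gamma_\ep - \beta$ a nonpositive integer versus a negative non-integer, and making sure the large-$s$ asymptotics of $M$ are quoted with the correct hypotheses; everything else is bookkeeping with the contiguous relations. Finally (v), $-\varphi_{\beta,\ep}'(s) \ge k_{\beta,\ep}(1+s)^{-\beta-1}$, is immediate by applying the already-proved lower bound of (i) with parameter $\beta+1$ to the formula $-\varphi_{\beta,\ep}'(s) = \frac{\beta}{\gamma_\ep}e^{-s}M(\gamma_\ep-\beta,\gamma_\ep+1;s)$ from (iv), after noting that $e^{-s}M(\gamma_\ep-\beta,\gamma_\ep+1;s)$ is, up to the harmless shift $\gamma_\ep \mapsto \gamma_\ep+1$ in the second Kummer parameter, of the same form as $\varphi_{\beta+1,\ep}$ and obeys the same $s\to\infty$ asymptotic $\sim c\, s^{-\beta-1}$ with $c > 0$; if $\beta = 0$ the inequality is trivial since both sides vanish.
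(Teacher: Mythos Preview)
Your approach is correct and essentially matches the paper's: for (i)--(iv) the paper simply cites \cite[Lemma~3.5]{SoWa21_JMSJ} while you supply the details via the standard Kummer differentiation formula, the contiguous relation $M(b,c;s)-\tfrac{b}{c}M(b+1,c+1;s)=\tfrac{c-b}{c}M(b,c+1;s)$, and the large-$s$ asymptotics $M(b,c;s)\sim \tfrac{\Gamma(c)}{\Gamma(b)}e^{s}s^{b-c}$; for (v) both proofs combine the explicit expression for $\varphi_{\beta,\ep}'$ from (iv) with this same asymptotic applied to $M(\gamma_\ep-\beta,\gamma_\ep+1;s)$. One small correction: at $\beta=0$ one has $\varphi_{0,\ep}\equiv 1$, so $-\varphi_{0,\ep}'=0$ while the right-hand side $k(1+s)^{-1}$ is strictly positive for $k>0$; thus (v) as stated requires $\beta>0$ --- this is a quirk of the lemma's statement rather than a flaw in your argument, and the paper's proof has the same issue.
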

\begin{proof}
The proof of the assertions
(i)--(iv) are completely the same as that of
\cite[Lemma 3.5]{SoWa21_JMSJ},
and we omit the detail.
The property (v) follows from
the expression in (vi) and
the fact
$M(\gamma_{\varepsilon}-\beta, \gamma_{\varepsilon} + 1; s)
\sim \frac{\Gamma(\gamma_{\varepsilon}+1)}{\Gamma(\gamma_{\varepsilon}-\beta)}s^{-\beta-1}e^s$
as
$s\to \infty$
(see, for example, \cite[Lemma 2.2 (ii)]{SoWa21_JMSJ} or
\cite[p.192, (6.1.8)]{BeWo10}).
\end{proof}

Here, we give a family of supersolutions of
$a(x)v_t - \Delta v = 0$,
which we use later.

\begin{definition}\label{phi.beta.ep}
For
$\beta \ge 0$
and
$(x,t) \in \mathbb{R}^N \times [0,\infty)$,
we define 
\[
	\Phi_{\beta,\varepsilon}(x,t; t_0)=(t_0+t)^{-\beta}\varphi_{\beta,\varepsilon}(z), 
	\quad 
	z=\frac{\widetilde{\gamma}_\varepsilon A_\varepsilon(x)}{t_0+t},
\]
where
$\ep \in (0,1/2)$,
$\widetilde \gamma_{\ep}$ is the constant given in \eqref{gammatilde},
$t_0 \ge 1$,
$\varphi_{\beta,\ep}$
is the function defined by Definition \ref{phi.beta},
and
$A_{\ep}(x)$ is the function constructed in Lemma \ref{lem_A_ep}.
\end{definition}

For $t_0 \ge 1$ and
$(x,t)\in \mathbb{R}^N \times [0,\infty)$,
we also define
\begin{align}
\label{psi}
	\Psi(x,t; t_0) :=
		t_0 + t + A_{\varepsilon}(x).
\end{align}

\begin{proposition}\label{prop:super-sol}
The function
$\Phi_{\beta,\ep}(x,t;t_0)$
defined in Definition \ref{phi.beta.ep}
satisfies the following properties:
\begin{itemize}
\item[(i)]
For every $\beta \ge 0$, we have
\begin{align*}
	\partial_t \Phi_{\beta,\varepsilon}(x,t;t_0) = -\beta \Phi_{\beta+1,\varepsilon}(x,t;t_0)
\end{align*}
for any $(x,t) \in \mathbb{R}^N \times [0,\infty)$.
\item[(ii)]
If $\beta \ge 0$, then
there exists a constant $C_{\alpha,\beta,\varepsilon} > 0$ such that
\begin{align*}
	|\Phi_{\beta,\varepsilon}(x,t;t_0) | \le C_{\alpha,\beta,\varepsilon} \Psi (x,t; t_0)^{-\beta}
\end{align*}
for any $(x,t) \in \mathbb{R}^N \times [0,\infty)$.
\item[(iii)]
If $\beta \in [0, \gamma_{\varepsilon})$,
then there exists a constant $c_{\alpha,\beta,\varepsilon} > 0$ such that
\begin{align*}
	\Phi_{\beta,\varepsilon}(x,t;t_0) \ge c_{\alpha,\beta,\varepsilon} \Psi (x,t; t_0)^{-\beta}
\end{align*}
for any $(x,t) \in \mathbb{R}^N \times [0,\infty)$.
\item[(iv)]
For every $\beta \ge 0$,
there exists a constant $c_{\alpha,\beta,\varepsilon}>0$ such that
\begin{align*}
	a(x)\pa_t\Phi_{\beta,\ep}(x,t;t_0)-\Delta \Phi_{\beta,\ep}(x,t;t_0)
	\ge c_{\alpha,\beta,\varepsilon} a(x) \Psi(x,t;t_0)^{-\beta-1}.
\end{align*}
for any
$(x,t) \in \mathbb{R}^N \times [0,\infty)$.
\end{itemize}
\end{proposition}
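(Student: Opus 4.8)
The plan is to obtain parts (i)--(iii) by direct differentiation together with elementary estimates, and to concentrate on (iv), which is the heart of the matter.

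For (i): differentiating $\Phi_{\beta,\ep}=(t_0+t)^{-\beta}\varphi_{\beta,\ep}(z)$ in $t$ and using $\pa_t z=-z/(t_0+t)$ gives
\[
\pa_t\Phi_{\beta,\ep}(x,t;t_0)=-(t_0+t)^{-\beta-1}\bigl(\beta\varphi_{\beta,\ep}(z)+z\varphi_{\beta,\ep}'(z)\bigr),
\]
and the recurrence Lemma \ref{lem:phi.beta}(iii) rewrites the bracket as $\beta\varphi_{\beta+1,\ep}(z)$, which is exactly $\beta\Phi_{\beta+1,\ep}(x,t;t_0)$. For (ii) and (iii) the only nontrivial ingredient is the elementary equivalence
\[
(t_0+t)(1+z)=(t_0+t)+\widetilde\gamma_\ep A_\ep(x)\sim t_0+t+A_\ep(x)=\Psi(x,t;t_0),
\]
valid with constants depending only on $\widetilde\gamma_\ep$ because $A_\ep\ge0$ by \eqref{A2}. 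Combining it with the two-sided bound $k_{\beta,\ep}(1+s)^{-\beta}\le\varphi_{\beta,\ep}(s)\le K_{\beta,\ep}(1+s)^{-\beta}$ of Lemma \ref{lem:phi.beta}(i) (for $\beta<\gamma_\ep$), respectively the one-sided bound of Lemma \ref{lem:phi.beta}(ii) (for general $\beta\ge0$), and the identity $(t_0+t)^{-\beta}(1+z)^{-\beta}=[(t_0+t)(1+z)]^{-\beta}$, yields (ii) and (iii) at once, the monotonicity of $r\mapsto r^{-\beta}$ fixing the direction of each inequality.

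For (iv) I would first compute $\Delta\Phi_{\beta,\ep}$. Writing $\nabla z=\widetilde\gamma_\ep(t_0+t)^{-1}\nabla A_\ep$, using $\widetilde\gamma_\ep A_\ep(x)=z(t_0+t)$, and introducing $\eta(x):=\widetilde\gamma_\ep\,|\nabla A_\ep(x)|^2/(a(x)A_\ep(x))$, one obtains
\[
\Delta\Phi_{\beta,\ep}=(t_0+t)^{-\beta-1}\bigl(\eta(x)\,a(x)\,z\,\varphi_{\beta,\ep}''(z)+\widetilde\gamma_\ep\,\Delta A_\ep(x)\,\varphi_{\beta,\ep}'(z)\bigr),
\]
where, by \eqref{A3} and \eqref{gammatilde}, $0\le\eta(x)\le\widetilde\gamma_\ep\bigl(\tfrac{2-\alpha}{N-\alpha}+\ep\bigr)<1$ uniformly in $x$. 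Together with (i) this gives $a\pa_t\Phi_{\beta,\ep}-\Delta\Phi_{\beta,\ep}=(t_0+t)^{-\beta-1}B$, where
\[
B=-a(x)\bigl(\beta\varphi_{\beta,\ep}(z)+z\varphi_{\beta,\ep}'(z)+\eta(x)z\varphi_{\beta,\ep}''(z)\bigr)-\widetilde\gamma_\ep\,\Delta A_\ep(x)\,\varphi_{\beta,\ep}'(z).
\]
The key algebraic step is to split $\eta z\varphi_{\beta,\ep}''=z\varphi_{\beta,\ep}''-(1-\eta)z\varphi_{\beta,\ep}''$ and to use the defining ODE \eqref{eq:varphi}, in the form $z\varphi_{\beta,\ep}''(z)+z\varphi_{\beta,\ep}'(z)+\beta\varphi_{\beta,\ep}(z)=-\gamma_\ep\varphi_{\beta,\ep}'(z)$, to collapse the full second-order part, leaving
\[
B=\bigl(a(x)\gamma_\ep-\widetilde\gamma_\ep\Delta A_\ep(x)\bigr)\varphi_{\beta,\ep}'(z)+\bigl(1-\eta(x)\bigr)a(x)\,z\,\varphi_{\beta,\ep}''(z).
\]
Now for $0\le\beta<\gamma_\ep$, Lemma \ref{lem:phi.beta}(iv) gives $\varphi_{\beta,\ep}'\le0\le\varphi_{\beta,\ep}''$, so the second term is nonnegative; and \eqref{A1} gives $\widetilde\gamma_\ep\Delta A_\ep(x)\ge\widetilde\gamma_\ep(1-\ep)a(x)$, while \eqref{gammatilde} yields the decisive strict gap $\widetilde\gamma_\ep(1-\ep)-\gamma_\ep=\widetilde\gamma_\ep\bigl((1-\ep)-(1-2\ep)\bigr)=\ep\widetilde\gamma_\ep>0$, hence $a(x)\gamma_\ep-\widetilde\gamma_\ep\Delta A_\ep(x)\le-\ep\widetilde\gamma_\ep a(x)<0$. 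Multiplying by $\varphi_{\beta,\ep}'(z)\le0$, the first term is $\ge\ep\widetilde\gamma_\ep a(x)\bigl(-\varphi_{\beta,\ep}'(z)\bigr)$, and Lemma \ref{lem:phi.beta}(v) bounds $-\varphi_{\beta,\ep}'(z)\ge k_{\beta,\ep}(1+z)^{-\beta-1}$. Thus $B\ge\ep\widetilde\gamma_\ep k_{\beta,\ep}\,a(x)(1+z)^{-\beta-1}$, and multiplying back by $(t_0+t)^{-\beta-1}$ and invoking the equivalence $(t_0+t)(1+z)\sim\Psi$ from the second paragraph gives (iv).

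The main obstacle is precisely this last computation. The algebra sits at a critical balance, so one must keep track of signs very carefully and, crucially, extract a \emph{strictly positive} lower bound rather than mere nonnegativity; this is exactly what forces the slight modification of $\widetilde\gamma_\ep$ and $\gamma_\ep$ noted in the Remark following Definition \ref{phi.beta} (with the unmodified constants one would only reach $B\ge0$). For the same reason one must not over-apply the ODE: fully eliminating $\varphi_{\beta,\ep}''$ destroys the nonnegative term that is later used to absorb errors, hence the $(1-\eta)$ splitting. The argument as written applies whenever the monotonicity and convexity of $\varphi_{\beta,\ep}$ in Lemma \ref{lem:phi.beta}(iv)--(v) are available, that is for $0\le\beta<\gamma_\ep$, which is the range of exponents that enters the weighted energy estimates of Sections 4--6.
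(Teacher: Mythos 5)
Your proposal is correct and, for part (iv), follows essentially the same route as the paper: the identical rearrangement via the ODE \eqref{eq:varphi} (equivalently, the splitting that keeps the nonnegative term $(1-\eta)a\,z\,\varphi_{\beta,\ep}''(z)$), the bounds \eqref{A1} and \eqref{A3} giving the strict gap $\ep\widetilde\gamma_\ep$, and Lemma \ref{lem:phi.beta}(iv)--(v) yielding the lower bound $\ep\widetilde\gamma_\ep a(x)\bigl(-\varphi_{\beta,\ep}'(z)\bigr)\gtrsim a(x)\Psi^{-\beta-1}(t_0+t)^{\beta+1}$, exactly as in the paper's proof. The only differences are cosmetic: the paper cites its earlier work for (i)--(iii) while you prove them directly, and your remark that the argument for (iv) uses $\beta<\gamma_\ep$ reflects the same restriction implicit in the paper's proof (and is the only range used in Sections 4--6).
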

\begin{proof}
The properties (i)--(iii) are the same as
\cite[Lemma 3.8]{SoWa21_JMSJ} and \cite[Lemma 5.1]{SoWa21_JMSJ}.
Thus, we omit the detail.
For (iv), we put
$z = \tilde{\gamma}_{\varepsilon} A_{\varepsilon}(x)/(t_0+t)$
and compute
\begin{align}
    &(t_0+t)^{\beta+1}
    \left(
    a(x) \partial_t \Phi_{\beta,\varepsilon} (x,t;t_0) - \Delta \Phi_{\beta,\varepsilon} (x,t;t_0)
    \right)\\
    &=
    -a(x)
    \left( \beta \varphi_{\beta,\varepsilon}(z) + z \varphi_{\beta,\varepsilon}'(z)
    + \tilde{\gamma}_{\varepsilon} \frac{\Delta A_{\varepsilon}(x)}{a(x)} \varphi_{\beta,\varepsilon}'(z) + \tilde{\gamma}_{\varepsilon} \frac{|\nabla A_{\varepsilon}(x)|^2}{a(x)A_{\varepsilon}(x)} z \varphi_{\beta,\varepsilon}''(z) \right).
\end{align}
Using the equation \eqref{eq:varphi} with \eqref{gammatilde},
we rewrite the right-hand side as
\begin{align}
    &\tilde{\gamma}_{\varepsilon} a(x) 
    \left( 1 - 2\varepsilon - \frac{\Delta A_{\varepsilon}(x)}{a(x)} \right) \varphi_{\beta,\varepsilon}'(z)
    + a(x)
    \left( 1 - \tilde{\gamma}_{\varepsilon} \frac{|\nabla A_{\varepsilon}(x)|^2}{a(x)A_{\varepsilon}(x)} \right) z \varphi_{\beta,\varepsilon}''(z).
\end{align}
By \eqref{A1} and \eqref{A3} in Lemma \ref{lem_A_ep},
we have
\begin{align}
    &1-2\varepsilon - \frac{\Delta A_{\varepsilon}(x)}{a(x)}
    \le - \varepsilon,\\
    &1 - \tilde{\gamma}_{\varepsilon} \frac{|\nabla A_{\varepsilon}(x)|^2}{a(x)A_{\varepsilon}(x)}
    \ge
    \varepsilon \left( \frac{2-\alpha}{N-\alpha} + 2\varepsilon \right)^{-1} > 0.
\end{align}
Combining them to the properties (iv) and (v)
in Lemma \ref{lem:phi.beta},
we conclude
\begin{align}
    a(x) \partial_t \Phi_{\beta,\varepsilon} (x,t;t_0) - \Delta \Phi_{\beta,\varepsilon} (x,t;t_0)
    &\ge 
    - \varepsilon \tilde{\gamma}_{\varepsilon} a(x) (t_0+t)^{-\beta-1}  \varphi_{\beta,\varepsilon}' \left( \frac{\tilde{\gamma}_{\varepsilon} A_{\varepsilon}(x)}{t_0+t} \right) \\
    &\ge 
    C a(x) (t_0+t)^{-\beta-1} \left( 1 + \frac{\tilde{\gamma}_{\varepsilon} A_{\varepsilon}(x)}{t_0+t} \right)^{-\beta-1} \\
    &\ge 
    C a(x)  \left(t_0 + t + A_{\varepsilon}(x) \right)^{-\beta-1} \\
    &=
    C a(x) \Psi(x,t;t_0)^{-\beta-1},
\end{align}
which completes the proof.
\end{proof}

Finally, we prepare a useful lemma
for our weighted energy method.
\begin{lemma}[{\cite[Lemma 2.5]{So19DIE}}]\label{lem.deltaphi}
Let $\Phi \in C^2(\overline{\Omega})$
be a positive function and
let $\delta \in (0, 1/2)$.
Then, for any
$u\in H^2(\Omega) \cap H^1_0(\Omega)$,
we have
\begin{align*}
	\int_{\Omega} u \Delta u \Phi^{-1+2\delta}\,dx
		&\le - \frac{\delta}{1-\delta} \int_{\Omega} |\nabla u|^2 \Phi^{-1+2\delta}\,dx
			+ \frac{1-2\delta}{2} \int_{\Omega} u^2 (\Delta \Phi) \Phi^{-2+2\delta} \,dx,
\end{align*}
provided that the right-hand side is finite.
\end{lemma}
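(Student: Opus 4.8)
The plan is to turn the asserted inequality into an exact identity via two integrations by parts plus one completing-the-square step, after which the estimate follows by discarding a nonnegative term. Throughout write
\[
  I = \int_{\Omega} |\nabla u|^2 \Phi^{-1+2\delta}\,dx,\qquad
  K = \int_{\Omega} u^2 |\nabla\Phi|^2 \Phi^{-3+2\delta}\,dx,\qquad
  L = \int_{\Omega} u^2 (\Delta\Phi)\,\Phi^{-2+2\delta}\,dx .
\]
By the hypothesis that the right-hand side $-\frac{\delta}{1-\delta}I+\frac{1-2\delta}{2}L$ is finite (together with a density argument discussed at the end), we may treat these quantities as finite and manipulate them freely; it suffices to establish the identities below.

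First I would integrate by parts once. Since $u\in H^1_0(\Omega)$ the boundary term drops and, using $\nabla(\Phi^{-1+2\delta}) = (2\delta-1)\Phi^{-2+2\delta}\nabla\Phi$,
\[
  \int_{\Omega} u\,\Delta u\,\Phi^{-1+2\delta}\,dx
  = -\int_{\Omega}\nabla u\cdot\nabla\!\bigl(u\,\Phi^{-1+2\delta}\bigr)\,dx
  = -I + (1-2\delta)\int_{\Omega} u\,\Phi^{-2+2\delta}\,\nabla u\cdot\nabla\Phi\,dx .
\]
Writing $u\nabla u=\tfrac12\nabla(u^2)$ and integrating by parts a second time (again with vanishing boundary term, since $u^2\in H^1_0(\Omega)$ and $\Phi\in C^2(\overline\Omega)$), and using $\nabla\!\cdot\!(\Phi^{-2+2\delta}\nabla\Phi)=\Phi^{-2+2\delta}\Delta\Phi+(2\delta-2)\Phi^{-3+2\delta}|\nabla\Phi|^2$, the cross term equals $(1-\delta)K-\tfrac12L$. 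Hence
\[
  \int_{\Omega} u\,\Delta u\,\Phi^{-1+2\delta}\,dx
  = -I - \tfrac{1-2\delta}{2}L + (1-2\delta)(1-\delta)K .
\]

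Next I would introduce the manifestly nonnegative quantity
\[
  Q := \int_{\Omega}\Bigl|\nabla u + (\delta-1)\,u\,\tfrac{\nabla\Phi}{\Phi}\Bigr|^2 \Phi^{-1+2\delta}\,dx \ge 0 ,
\]
which, after expanding the square and applying the same second integration by parts to its cross term, satisfies $Q = I + (1-\delta)L - (1-\delta)^2K$; equivalently $(1-2\delta)(1-\delta)K = \frac{1-2\delta}{1-\delta}\bigl(I+(1-\delta)L-Q\bigr)$. Substituting this into the previous identity and collecting terms (using $\frac{1-2\delta}{1-\delta}-1=-\frac{\delta}{1-\delta}$ and $(1-2\delta)-\frac{1-2\delta}{2}=\frac{1-2\delta}{2}$) gives
\[
  \int_{\Omega} u\,\Delta u\,\Phi^{-1+2\delta}\,dx
  = -\frac{\delta}{1-\delta}\,I + \frac{1-2\delta}{2}\,L - \frac{1-2\delta}{1-\delta}\,Q .
\]
Since $\delta\in(0,\tfrac12)$ gives $\frac{1-2\delta}{1-\delta}>0$ and $Q\ge0$, dropping the last term yields exactly the claimed inequality.

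The step I expect to need the most care is the rigorous justification of the two integrations by parts and of treating $I,K,L,Q$ as finite, since a priori $u\in H^2\cap H^1_0$ only and $\Omega$ (hence $\Phi$) may be unbounded. I would handle this in the standard way: first prove the two identities for $u\in C_0^\infty(\Omega)$, where every integral is over a compact set and all steps are classical, and then pass to the limit along an approximating sequence, using the finiteness of the asserted right-hand side together with the first identity to control the remaining integrals in the limit (the term $Q\ge0$ only helps, as it is discarded). This is precisely the argument carried out in \cite[Lemma~2.5]{So19DIE}.
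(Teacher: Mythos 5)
Your computation is correct: the two integrations by parts give the exact identity
$\int_\Omega u\Delta u\,\Phi^{-1+2\delta}dx=-I-\tfrac{1-2\delta}{2}L+(1-2\delta)(1-\delta)K$, your expansion of $Q$ gives $Q=I+(1-\delta)L-(1-\delta)^2K$, and eliminating $K$ indeed yields the stated inequality with the discarded remainder $-\tfrac{1-2\delta}{1-\delta}Q\le 0$; the present paper gives no proof of this lemma (it is quoted from \cite[Lemma 2.5]{So19DIE}), and your square-completion route is exactly the standard argument behind it. One small caution on your closing remark: $C_0^\infty(\Omega)$ is not dense in $H^2(\Omega)\cap H^1_0(\Omega)$ for the $H^2$-norm (that would be $H^2_0(\Omega)$), and in the intended application the weight $\Phi^{-1+2\delta}$ is unbounded at infinity, so the justification is better phrased via spatial cutoffs of $u$ (or truncation of the weight) together with the finiteness hypothesis on the right-hand side, rather than smooth compactly supported approximants of $u$; this is a presentational fix of the limiting step you already flagged, not a gap in the idea.
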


\section{Justification of the decomposition}
In this section, 
we justify the decomposition
\[
u=\sum_{j=0}^n\pa_t^jV_j+\pa_t^{n+1}U_{n+1}
\]
which is explained in Subsection 1.3. 
Here we need to clarify 
existence, uniqueness and also an expected regularity 
of respective components $V_0$, \ldots $V_n$ and $U_{n+1}$. 
Therefore we discuss it in the following way:
we first prepare the well-posedness of
the initial-boundary value problem of the damped wave equation.
Next, we show a key decomposition lemma
which states that a solution of the damped wave equation
can be decomposed into a solution of
the corresponding parabolic equation and
the derivative of a solution of the damped wave equation
with another inhomogeneous term.
Finally, using the decomposition lemma repeatedly,
we explain how the higher order asymptotic profiles are
determined.

\subsection{Well-posedness and regularity of solutions for the damped wave equation}

We consider the initial-boundary value problem
of the damped wave equation
with a general inhomogeneous term
\begin{align}\label{eq:gDW}
     \left\{ \begin{array}{ll}
        \partial_t^2 w - \Delta w + a(x) \partial_t w = F, &x \in \Omega, t>0,\\
        w(x,t) = 0, & x \in \partial \Omega, t>0,\\
        w(x,0) = w_0(x), \ 
        \partial_t w(x,0) = w_1(x),
        &x \in \Omega.
        \end{array} \right.
\end{align}
We first prepare the well-posedness and the regularity of solutions for
\eqref{eq:gDW}.

We recall the following well-posedness result
by Ikawa \cite{Ikawa}.
\begin{theorem}[{\cite[Theorem 1]{Ikawa}}] \label{thm:Ikawa:1}
For any
$(w_0, w_1) \in (H^2(\Omega)\cap H^1_0(\Omega)) \times H^1_0(\Omega)$
and
$F \in C^1([0,\infty);L^2(\Omega))$,
there exists a unique solution
\begin{align}
\label{eq:sol:gDW:tr}
    w \in C([0,\infty); H^2(\Omega))
        \cap C^1([0,\infty); H^1_0(\Omega))
        \cap C^2([0,\infty); L^2(\Omega))
\end{align}
of \eqref{eq:gDW}.
\end{theorem}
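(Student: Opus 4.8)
The plan is to recast \eqref{eq:gDW} as a first-order abstract Cauchy problem on the Hilbert space $\mathcal{H} = H^1_0(\Omega)\times L^2(\Omega)$, with norm $\|(f_1,f_2)\|_{\mathcal{H}}^2 = \int_\Omega(|\nabla f_1|^2 + |f_1|^2 + |f_2|^2)\,dx$, and to apply semigroup theory. Writing $U=(w,\partial_t w)^\top$ and $\mathcal{F}(t)=(0,F(t))^\top$, the problem becomes $U'(t)=\mathcal{A}U(t)+\mathcal{F}(t)$ with
\[
\mathcal{A}(f_1,f_2)=(f_2,\;\Delta f_1 - a(x)f_2),\qquad D(\mathcal{A})=(H^2(\Omega)\cap H^1_0(\Omega))\times H^1_0(\Omega);
\]
the identification of $D(\mathcal{A})$ with this space uses $H^2$-elliptic regularity for the Dirichlet Laplacian on the smooth-boundary domain $\Omega$. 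First I would check that $\mathcal{A}$ generates a $C_0$-semigroup by the Lumer--Phillips theorem: a direct computation gives $\mathrm{Re}\langle\mathcal{A}U,U\rangle_{\mathcal{H}}\le \omega\|U\|_{\mathcal{H}}^2$ for some $\omega>0$ (the only term that is not manifestly $\le 0$ is the cross term $\int_\Omega f_1 f_2\,dx$, absorbed by Young's inequality), so $\mathcal{A}-\omega I$ is dissipative; and for $\lambda$ large the range of $\lambda I-\mathcal{A}$ is all of $\mathcal{H}$, since the resolvent equation reduces to the elliptic problem $\lambda^2 f_1-\Delta f_1+\lambda a(x)f_1=g$ in $\Omega$, $f_1\in H^1_0(\Omega)$, which is uniquely solvable by Lax--Milgram (the bilinear form is coercive on $H^1_0(\Omega)$ because $a\ge0$) and then yields $f_1\in H^2(\Omega)$ by elliptic regularity.

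Next I would handle the inhomogeneous term by Duhamel's formula. Since $F\in C^1([0,\infty);L^2(\Omega))$ we have $\mathcal{F}\in C^1([0,\infty);\mathcal{H})$, and since $U(0)=(w_0,w_1)\in D(\mathcal{A})$, the classical regularity theorem for inhomogeneous abstract Cauchy problems shows that the mild solution
\[
U(t)=e^{t\mathcal{A}}U(0)+\int_0^t e^{(t-s)\mathcal{A}}\mathcal{F}(s)\,ds
\]
is a strict solution: $U\in C([0,\infty);D(\mathcal{A}))\cap C^1([0,\infty);\mathcal{H})$. Reading off components gives $w\in C([0,\infty);H^2(\Omega)\cap H^1_0(\Omega))\cap C^1([0,\infty);H^1_0(\Omega))$, and then from the equation $\partial_t^2 w=\Delta w-a(x)\partial_t w+F$ the right-hand side lies in $C([0,\infty);L^2(\Omega))$, so $w\in C^2([0,\infty);L^2(\Omega))$. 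This establishes existence and the regularity \eqref{eq:sol:gDW:tr}. For uniqueness I would use the energy identity: the difference $w$ of two solutions with the same data and forcing solves the homogeneous problem with zero Cauchy data, so multiplying by $\partial_t w$ and integrating over $\Omega$ (the boundary term vanishing by the Dirichlet condition) gives $\frac{d}{dt}\,\tfrac12\int_\Omega(|\partial_t w|^2+|\nabla w|^2)\,dx=-\int_\Omega a(x)|\partial_t w|^2\,dx\le 0$; since this energy is zero at $t=0$ it remains zero, forcing $\partial_t w\equiv 0$, hence $w\equiv w(0)=0$.

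The main point requiring care, relative to the bounded-domain case, is that $\Omega$ is unbounded: one must ensure that $H^2$-elliptic regularity up to the boundary and the identification of $D(\mathcal{A})$ remain valid on the exterior domain, which is fine since $\partial\Omega$ is smooth and bounded and $a$ has bounded derivatives. Since the statement is exactly \cite[Theorem 1]{Ikawa}, in the paper we merely quote it; the argument above records the standard route via the energy space.
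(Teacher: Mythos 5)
Your argument is correct, but note that the paper does not prove this statement at all: it is quoted verbatim as \cite[Theorem 1]{Ikawa}, so there is no in-paper proof to match. What you have written is a standard, self-contained modern justification via semigroup theory on the energy space $H^1_0(\Omega)\times L^2(\Omega)$: quasi-dissipativity of $\mathcal{A}$ (the only positive contribution being the cross term $\int_\Omega f_1f_2\,dx$), the range condition via Lax--Milgram plus $H^2$-elliptic regularity up to the compact smooth boundary, and then the classical regularity theorem for inhomogeneous abstract Cauchy problems with $\mathcal{F}\in C^1([0,\infty);\mathcal{H})$ and $U(0)\in D(\mathcal{A})$, together with an energy-identity uniqueness argument. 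This differs in flavor from Ikawa's original treatment, which establishes well-posedness of general mixed problems for second-order hyperbolic operators (allowing time-dependent coefficients) through energy inequalities and approximation, and which is what the paper leans on precisely because it also supplies the higher-order regularity statement (Theorem \ref{thm:reg:gDW:ikw}) used later; your route buys a short, transparent proof of exactly the $k=0$ statement needed here. Two small points to make explicit if this were to be included: Lumer--Phillips also requires $D(\mathcal{A})$ dense and $\mathcal{A}$ closed (closedness follows from the same elliptic regularity used for the range condition), and the boundedness of $a$ on $\overline{\Omega}$ (guaranteed by \eqref{a} with $\alpha\ge 0$ and smoothness) is what makes both the perturbation term $a(x)f_2$ harmless in the domain identification and the coercivity argument; in the case $\Omega=\mathbb{R}^N$ the boundary condition and boundary regularity discussion simply drop out.
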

By applying the above theorem to
$\langle x \rangle^m w$,
$(\langle x \rangle^m w_0, \langle x \rangle^m w_1)$,
and
$\langle x \rangle^m F$,
we have the well-posedness of
the problem \eqref{eq:gDW}
in weighted Sobolev spaces.
\begin{theorem}\label{thm:WP:gDW}
For any
$(w_0, w_1) \in
(H^{2,m}(\Omega) \cap H^{1,m}_0(\Omega))
\times H^{1,m}_0(\Omega)$
and
$F \in C^1([0,\infty); H^{0,m}(\Omega))$,
there exists a unique solution
\begin{align}
\label{eq:sol:gDW:tr:m}
    w \in C([0,\infty); H^{2,m}(\Omega))
        \cap C^1([0,\infty); H^{1,m}_0(\Omega))
        \cap C^2([0,\infty); H^{0,m}(\Omega))
\end{align}
of \eqref{eq:gDW}.
\end{theorem}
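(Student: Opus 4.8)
The plan is to deduce Theorem~\ref{thm:WP:gDW} from Theorem~\ref{thm:Ikawa:1} by the substitution $\widetilde{w} := \lr{x}^m w$, which transforms the weighted problem into an unweighted one with a modified (but still admissible) damping coefficient and inhomogeneous term. First I would compute the equation satisfied by $\widetilde{w}$: since $w = \lr{x}^{-m}\widetilde{w}$, we have
\begin{align}
\Delta w = \lr{x}^{-m}\Delta \widetilde{w} + 2\nabla(\lr{x}^{-m})\cdot\nabla\widetilde{w} + (\Delta \lr{x}^{-m})\widetilde{w},
\end{align}
so that multiplying the PDE in \eqref{eq:gDW} by $\lr{x}^m$ yields
\begin{align}
\pa_t^2\widetilde{w} - \Delta\widetilde{w} + a(x)\pa_t\widetilde{w}
= \lr{x}^m F + 2\lr{x}^m\nabla(\lr{x}^{-m})\cdot\nabla\widetilde{w} + \lr{x}^m(\Delta\lr{x}^{-m})\widetilde{w}.
\end{align}
The new first-order and zeroth-order terms on the right are bounded-coefficient operators in $x$ (because $\lr{x}^m\nabla\lr{x}^{-m}$ and $\lr{x}^m\Delta\lr{x}^{-m}$ are smooth and bounded with bounded derivatives), so the right-hand side defines a bounded perturbation $\mathcal{L}\widetilde{w} + \widetilde{F}$ with $\widetilde{F} = \lr{x}^m F \in C^1([0,\infty);L^2(\Omega))$ and $\mathcal{L}$ mapping $H^2(\Omega)\cap H^1_0(\Omega)$ into $L^2(\Omega)$, and $H^1_0(\Omega)$ into $H^{-1}$-type spaces continuously.

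Next I would set up the argument at the level of semigroup theory or an energy-type fixed point. One clean route: rewrite the perturbed equation as an integral (Duhamel) equation against the solution operator furnished by Theorem~\ref{thm:Ikawa:1} for the unperturbed damped wave operator $\pa_t^2 - \Delta + a(x)\pa_t$, namely
\begin{align}
\widetilde{w}(t) = S(t)(\widetilde{w}_0,\widetilde{w}_1) + \int_0^t S(t-\tau)\bigl(0,\, \mathcal{L}\widetilde{w}(\tau) + \widetilde{F}(\tau)\bigr)\,d\tau,
\end{align}
where $\widetilde{w}_0 = \lr{x}^m w_0 \in H^2(\Omega)\cap H^1_0(\Omega)$, $\widetilde{w}_1 = \lr{x}^m w_1 \in H^1_0(\Omega)$, and $S(t)$ denotes the map $(f,g)\mapsto$ solution at time $t$. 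Since $\mathcal{L}$ is bounded from the energy space into $L^2$, a standard contraction-mapping argument on $C([0,T]; H^2\cap H^1_0)\cap C^1([0,T];H^1_0)$ for small $T$, followed by iteration over $[0,\infty)$ using the linearity (so that the local existence time does not shrink), gives a unique solution in the class \eqref{eq:sol:gDW:tr}. Transferring back via $w = \lr{x}^{-m}\widetilde{w}$ gives the regularity \eqref{eq:sol:gDW:tr:m}, since $w \in H^{k,m}$ is equivalent to $\lr{x}^m w \in H^k$ (the commutator terms between $\lr{x}^m$ and derivatives being lower order and bounded). Uniqueness in the weighted class follows from uniqueness of $\widetilde{w}$.

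The main obstacle I anticipate is not conceptual but bookkeeping: one must verify carefully that the weighted norm $\|\cdot\|_{H^{k,m}}$ as defined in \eqref{eq:L2} is genuinely equivalent to $\|\lr{x}^m\cdot\|_{H^k}$ for $k=0,1,2$ — that is, that differentiating past the weight $\lr{x}^m$ only produces terms controlled by lower-order weighted derivatives — and, relatedly, that $\lr{x}^m F \in C^1([0,\infty);L^2)$ is exactly equivalent to $F\in C^1([0,\infty);H^{0,m})$, which is immediate, while the spatial regularity equivalence requires the product/Leibniz estimates with the bounded multiplier $\lr{x}^m\lr{x}^{-m}=1$ chain. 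A secondary technical point is ensuring the boundary condition $w|_{\pa\Omega}=0$ is preserved: since $\lr{x}^m$ is a nonvanishing smooth multiplier, $w\in H^{1,m}_0(\Omega)$ iff $\lr{x}^m w \in H^1_0(\Omega)$, which keeps the Dirichlet condition intact under the substitution. Once these equivalences are in hand, the reduction to Theorem~\ref{thm:Ikawa:1} via the bounded perturbation $\mathcal{L}$ is routine.
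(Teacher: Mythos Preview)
Your proposal is correct and follows essentially the same route as the paper: conjugate by the weight via $\widetilde w=\lr{x}^m w$ and reduce to the unweighted theorem of Ikawa. The paper states this in one line without spelling out the commutator terms; your Duhamel/fixed-point treatment of the bounded lower-order perturbation $\mathcal{L}$ is one valid way to close it, though one may also note that Ikawa's original result already covers general second-order hyperbolic operators with bounded lower-order coefficients, so the perturbed equation for $\widetilde w$ falls directly under Theorem~\ref{thm:Ikawa:1} without a separate iteration.
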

Next, we discuss the regularity of the solution.
We first recall the following regularity theorem
by Ikawa \cite{Ikawa}:
\begin{theorem}[{\cite[Theorem 2]{Ikawa}}]\label{thm:reg:gDW:ikw}
Let $k \ge 1$ be an integer and let
$m\ge 0$,
$w_0 \in H^{k+2}(\Omega)$,
$w_1 \in H^{k+1}(\Omega)$,
and
$F \in \bigcap_{j=0}^{k} C^{j+1}([0,\infty); H^{k-j}(\Omega))$.
We successively define
\begin{align}
    w_p = \Delta w_{p-2} - a(x) w_{p-1} + \partial_t^{p-2} F(x,0)
\end{align}
for $p = 2,\ldots, k+1$,
and assume the $k$-th order compatibility condition
\begin{align}
    (w_p, w_{p+1} ) \in (H^{2}(\Omega) \cap H^{1}_0(\Omega)) \times H^{1}_0(\Omega)
\end{align}
for $p = 0,1,\ldots,k$.
Then, the solution $w$ to \eqref{eq:gDW}
obtained by Theorem \ref{thm:Ikawa:1}
belongs to
\begin{align}
    C([0,\infty);H^{k+2}(\Omega))
    \cap
    \left( \bigcap_{j=1}^{k+1}
        C^{k+2-j} ([0,\infty); H^{j}_0(\Omega)) \right)
    \cap
    C^{k+2}([0,\infty);L^2(\Omega)).
\end{align}
\end{theorem}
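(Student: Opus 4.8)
The plan is to prove this, for fixed $k$, in two phases. First I would upgrade the time regularity of the solution $w$ furnished by Theorem~\ref{thm:Ikawa:1}, by differentiating \eqref{eq:gDW} in $t$ and applying Theorem~\ref{thm:Ikawa:1} again to the differentiated problems; then I would upgrade the spatial regularity by elliptic estimates, bootstrapping downward in the number of time derivatives. At the conceptual level \eqref{eq:gDW} is a first-order linear evolution equation $\partial_t(w,\partial_t w)=\mathcal{A}(w,\partial_t w)+(0,F)$ for the matrix operator $\mathcal{A}\colon(v_1,v_2)\mapsto(v_2,\Delta v_1-a(x)v_2)$, whose domain on $H^1_0(\Omega)\times L^2(\Omega)$ is $(H^2(\Omega)\cap H^1_0(\Omega))\times H^1_0(\Omega)$; the vectors $(w_p,w_{p+1})$ are the time derivatives of $(w,\partial_t w)$ at $t=0$ predicted by the equation and the forcing, the $k$-th order compatibility condition says exactly that these lie in the natural higher-order energy spaces, and the asserted regularity is the standard output for such problems. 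Since the excerpt supplies only Ikawa's two theorems as black boxes, I will carry this out concretely.

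\emph{Phase 1 (time derivatives).} I would show, by induction on $l\in\{0,1,\dots,k\}$, that $\partial_t^l w$ exists and equals the unique solution $w^{(l)}$ provided by Theorem~\ref{thm:Ikawa:1} of the problem $\partial_t^2 v-\Delta v+a(x)\partial_t v=\partial_t^l F$ with $v(0)=w_l$, $\partial_t v(0)=w_{l+1}$. The case $l=0$ is Theorem~\ref{thm:Ikawa:1} itself. For the step $l\to l+1$ (allowed since $l+1\le k$), the data $(w_{l+1},w_{l+2})$ lies in $(H^2(\Omega)\cap H^1_0(\Omega))\times H^1_0(\Omega)$ by the compatibility condition at $p=l+1$, and $\partial_t^{l+1}F\in C^1([0,\infty);L^2(\Omega))$ because the hypothesis gives $F\in C^{l+2}([0,\infty);H^{k-l-1}(\Omega))\subseteq C^{l+2}([0,\infty);L^2(\Omega))$; hence Theorem~\ref{thm:Ikawa:1} yields $w^{(l+1)}$ in the class~\eqref{eq:sol:gDW:tr}. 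To identify $\partial_t w^{(l)}$ with $w^{(l+1)}$, set $\widetilde W(t)=w_l+\int_0^t w^{(l+1)}(s)\,ds$; integrating the equation for $w^{(l+1)}$ in $t$, using the recursion $w_{l+2}=\Delta w_l-a(x)w_{l+1}+\partial_t^l F(\cdot,0)$, one checks that $\widetilde W$ belongs to the class~\eqref{eq:sol:gDW:tr} and solves the $l$-th problem, so uniqueness in Theorem~\ref{thm:Ikawa:1} forces $\widetilde W=w^{(l)}$, whence $\partial_t^{l+1}w=w^{(l+1)}$. This phase gives $\partial_t^l w\in C([0,\infty);H^2(\Omega))\cap C^1([0,\infty);H^1_0(\Omega))\cap C^2([0,\infty);L^2(\Omega))$ for $0\le l\le k$; in particular $\partial_t^{k+1}w\in C([0,\infty);H^1_0(\Omega))$ and $\partial_t^{k+2}w\in C([0,\infty);L^2(\Omega))$, the two extreme members of the asserted class.

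\emph{Phase 2 (spatial derivatives).} I would rewrite the $l$-th differentiated equation as $\Delta(\partial_t^l w)=\partial_t^{l+2}w+a(x)\partial_t^{l+1}w-\partial_t^l F$ and run a decreasing induction on $l$ from $l=k$ down to $l=0$, proving $\partial_t^l w\in C([0,\infty);H^{k+2-l}(\Omega))$. For $l=k$ this is Phase 1. For the inductive step, the right-hand side lies in $C([0,\infty);H^{k-l}(\Omega))$: the term $\partial_t^{l+2}w$ by the induction hypothesis (or Phase 1 when $l=k-1$); the term $a(x)\partial_t^{l+1}w$ because $a$ is smooth with bounded derivatives, so multiplication by $a$ is bounded on every $H^s(\Omega)$, and $\partial_t^{l+1}w\in C([0,\infty);H^{k+1-l}(\Omega))$; and $\partial_t^l F\in C([0,\infty);H^{k-l}(\Omega))$ directly from the hypothesis on $F$. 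Since $\partial_t^l w$ has vanishing Dirichlet trace (inherited from $w=0$ on $\partial\Omega$) and $\partial\Omega$ is smooth and compact, elliptic regularity for the Dirichlet Laplacian on $\Omega$, via the estimate $\|v\|_{H^{s+2}}\lesssim\|\Delta v\|_{H^{s}}+\|v\|_{L^2}$ (which is linear and hence preserves continuity in $t$), gives $\partial_t^l w\in C([0,\infty);H^{k+2-l}(\Omega))$. Taking $l=0$ yields $w\in C([0,\infty);H^{k+2}(\Omega))$, and combining with Phase 1 gives the full class, the vanishing-trace requirements built into the spaces $H^j_0(\Omega)$ being supplied by the boundary condition.

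The step I expect to be the crux is the rigorous identification $\partial_t^{l+1}w=w^{(l+1)}$ in Phase 1: one has to verify that the candidate $\widetilde W$ genuinely belongs to the well-posedness class~\eqref{eq:sol:gDW:tr} and solves the $l$-th problem, and it is precisely here that the recursion for $w_p$ and the $k$-th order compatibility conditions are indispensable. (A more robust but longer alternative is the difference-quotient method: bound $h^{-1}\bigl(w(\cdot+h)-w(\cdot)\bigr)$ uniformly in $h$ by energy estimates and pass to the limit.) The remaining ingredients are routine: multiplication by $a$ and finitely many of its derivatives costs no regularity thanks to the hypotheses on $a$ (which, together with \eqref{a}, also make $a$ itself bounded), and elliptic regularity up to $\partial\Omega$ is available because $\partial\Omega$ is smooth and compact. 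Finally, the weighted version corresponding to the parameter $m\ge0$ follows by applying the whole argument to $\langle x\rangle^m w$, with data $\langle x\rangle^m w_p$ and forcing $\langle x\rangle^m F$, exactly as Theorem~\ref{thm:WP:gDW} was deduced from Theorem~\ref{thm:Ikawa:1}.
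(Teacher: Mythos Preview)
The paper does not give its own proof of this statement: Theorem~\ref{thm:reg:gDW:ikw} is simply quoted from Ikawa~\cite[Theorem~2]{Ikawa} as a black box, so there is no in-paper argument to compare against. Your two-phase sketch (time-differentiate and invoke Theorem~\ref{thm:Ikawa:1} on the differentiated problems via the compatibility conditions, then bootstrap spatial regularity by elliptic estimates) is the standard route and is essentially how Ikawa's original proof proceeds; the identification $\partial_t^{l+1}w=w^{(l+1)}$ through the integrated function $\widetilde W$ and the recursion for $w_{l+2}$ is correct, and the elliptic step is unproblematic since $\partial\Omega$ is smooth and compact (interior regularity handles the unbounded part of $\Omega$). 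One minor remark: the parameter $m\ge0$ in the hypothesis is a stray leftover in the paper's statement and plays no role in this unweighted theorem---the weighted analogue is the separate Theorem~\ref{thm:reg:gDW}---so your closing paragraph about conjugating by $\langle x\rangle^m$ pertains to that next result rather than to this one.
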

From the above theorem and the same argument as
Theorem \ref{thm:WP:gDW},
we have the following regularity
theorem in weighted Sobolev spaces.
\begin{theorem}\label{thm:reg:gDW}
Let $k \ge 1$ be an integer and let
$m\ge 0$,
$w_0 \in H^{k+2,m}(\Omega)$,
$w_1 \in H^{k+1,m}(\Omega)$,
and
$F \in \bigcap_{j=0}^{k} C^{j+1}([0,\infty); H^{k-j,m}(\Omega))$.
We successively define
\begin{align}
    w_p = \Delta w_{p-2} - a(x) w_{p-1} + \partial_t^{p-2} F(x,0)
\end{align}
for $p = 2,\ldots, k+1$,
and assume the $k$-th order compatibility condition
\begin{align}
    (w_p, w_{p+1} ) \in (H^{2,m}(\Omega) \cap H^{1,m}_0(\Omega)) \times H^{1,m}_0(\Omega)
\end{align}
for $p = 0,1,\ldots,k$.
Then, the solution $w$ to \eqref{eq:gDW}
obtained by Theorem \ref{thm:WP:gDW}
belongs to
\begin{align}
    C([0,\infty);H^{k+2,m}(\Omega))
    \cap
    \left( \bigcap_{j=1}^{k+1}
        C^{k+2-j} ([0,\infty); H^{j,m}_0(\Omega)) \right)
    \cap
    C^{k+2}([0,\infty);H^{0,m}(\Omega)).
\end{align}
\end{theorem}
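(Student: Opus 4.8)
The plan is to deduce Theorem \ref{thm:reg:gDW} from Theorem \ref{thm:reg:gDW:ikw} by the same conjugation device already used to pass from Theorem \ref{thm:Ikawa:1} to Theorem \ref{thm:WP:gDW}: replace $w$ by $\widetilde{w} = \langle x\rangle^m w$ and check that $\widetilde{w}$ solves a damped wave equation of the same type with data and forcing that live in the \emph{unweighted} spaces of Theorem \ref{thm:reg:gDW:ikw}, and whose compatibility conditions are exactly the hypotheses we are given. First I would compute, using $\partial_t^2 w - \Delta w + a\partial_t w = F$, the equation satisfied by $\widetilde w$. Since $\Delta(\langle x\rangle^{-m}\widetilde w) = \langle x\rangle^{-m}\Delta \widetilde w - 2m\langle x\rangle^{-m-2}x\cdot\nabla\widetilde w + (\text{lower order in }\widetilde w)\langle x\rangle^{-m-2}\widetilde w$, one gets
\begin{align}
\partial_t^2 \widetilde w - \Delta \widetilde w + a(x)\partial_t \widetilde w
= \langle x\rangle^m F + b_1(x)\cdot\nabla \widetilde w + b_0(x)\widetilde w =: \widetilde F,
\end{align}
where $b_1(x) = -2m\langle x\rangle^{-2}x$ and $b_0(x)$ is a smooth bounded function with all derivatives bounded (it involves $\langle x\rangle^{-2}$ and $\langle x\rangle^{-4}$ terms). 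The new initial data are $\widetilde w_0 = \langle x\rangle^m w_0 \in H^{k+2}(\Omega)$ and $\widetilde w_1 = \langle x\rangle^m w_1 \in H^{k+1}(\Omega)$, by the definition of the weighted spaces and the fact that multiplication by $\langle x\rangle^m$ and by its derivatives preserves unweighted $H^j$ on the relevant range of $j$.

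The one genuine wrinkle is that $\widetilde F$ now depends on $\widetilde w$ itself through the first-order operator $b_1\cdot\nabla + b_0$, so Theorem \ref{thm:reg:gDW:ikw} is not literally applicable as a black box. I would handle this in one of two equivalent ways. The cleaner option: absorb $b_1\cdot\nabla\widetilde w + b_0\widetilde w$ into the left-hand side and invoke the fact (already implicit in how Theorem \ref{thm:WP:gDW} is derived, and provable by the standard energy method for hyperbolic equations with smooth bounded coefficients) that Ikawa's well-posedness and regularity theory applies verbatim to $\partial_t^2 v - \Delta v + a\partial_t v + b_1\cdot\nabla v + b_0 v = G$ whenever $b_1,b_0$ are smooth with bounded derivatives — the lower-order terms do not affect the principal symbol and contribute only controllable terms in the energy inequalities and the elliptic regularity step. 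Alternatively, one sets up a fixed-point/iteration scheme: define $\widetilde w^{(0)} = 0$ and let $\widetilde w^{(\ell+1)}$ solve the equation with right-hand side $\langle x\rangle^m F + b_1\cdot\nabla\widetilde w^{(\ell)} + b_0\widetilde w^{(\ell)}$ via Theorem \ref{thm:reg:gDW:ikw}; on a short time interval the map is a contraction in $C([0,T];H^{k+2})\cap\cdots$, one gets local existence with full regularity, and the a priori weighted-energy bound (which does not see the sign of $b_0$, only its size) globalizes the solution.

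Next I would verify that the hypotheses of Theorem \ref{thm:reg:gDW:ikw} for $\widetilde w$ are precisely the ones assumed here for $w$. The requirement $F \in \bigcap_{j=0}^k C^{j+1}([0,\infty);H^{k-j,m}(\Omega))$ gives $\langle x\rangle^m F \in \bigcap_{j=0}^k C^{j+1}([0,\infty);H^{k-j}(\Omega))$, and adding $b_1\cdot\nabla\widetilde w + b_0\widetilde w$ keeps us in the same class once we know $\widetilde w$ has the claimed regularity (circular in the black-box formulation, handled honestly in the iteration formulation, where at each step $\widetilde w^{(\ell)}$ has the right regularity by induction). For the compatibility conditions one checks that the formally-defined iterates $\widetilde w_p$ for the $\widetilde w$-equation satisfy $\widetilde w_p = \langle x\rangle^m w_p + (\text{sums of }\langle x\rangle^m w_q, q<p,\text{ with smooth bounded coefficients})$ — this follows by induction on $p$ from the recursion $\widetilde w_p = \Delta\widetilde w_{p-2} - a\widetilde w_{p-1} + b_1\cdot\nabla\widetilde w_{p-1} + b_0\widetilde w_{p-2} + \partial_t^{p-2}\widetilde F(\cdot,0)$ together with the Leibniz expansion of $\Delta(\langle x\rangle^m w_{p-2})$ — so the assumed membership $(w_p,w_{p+1}) \in (H^{2,m}\cap H^{1,m}_0)\times H^{1,m}_0$ for $p=0,\dots,k$ translates exactly into $(\widetilde w_p,\widetilde w_{p+1}) \in (H^2\cap H^1_0)\times H^1_0$, noting that $\langle x\rangle^m$ maps $H^{j,m}_0(\Omega)$ onto $H^j_0(\Omega)$ since it is a smooth positive multiplier commuting with the vanishing-trace condition. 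Applying Theorem \ref{thm:reg:gDW:ikw} then places $\widetilde w$ in $C([0,\infty);H^{k+2}(\Omega)) \cap \bigcap_{j=1}^{k+1}C^{k+2-j}([0,\infty);H^j_0(\Omega)) \cap C^{k+2}([0,\infty);L^2(\Omega))$, and undoing the substitution, $w = \langle x\rangle^{-m}\widetilde w$ lies in the asserted weighted spaces.

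The main obstacle is not conceptual but bookkeeping: making the black-box application of Ikawa's theorem legitimate despite the extra first-order terms $b_1\cdot\nabla\widetilde w + b_0\widetilde w$, which is why I regard the short-time iteration plus global weighted energy estimate as the honest route, even though in practice one states it as ``the same argument as Theorem \ref{thm:WP:gDW}.'' Everything else — the translation of data, forcing, and compatibility conditions under conjugation by $\langle x\rangle^m$ — is a mechanical Leibniz-rule computation whose only subtlety is tracking which unweighted Sobolev order each term lands in, and all such terms stay within the range dictated by $k$.
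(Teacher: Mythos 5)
Your proposal is correct and follows essentially the same route as the paper: conjugate by $\langle x\rangle^m$ and apply Ikawa's unweighted well-posedness/regularity theory to $\widetilde w=\langle x\rangle^m w$, translating data, forcing, and the compatibility conditions through the multiplier. In fact you are more explicit than the paper, which silently absorbs the commutator terms $b_1\cdot\nabla\widetilde w+b_0\widetilde w$ into the statement ``the same argument as Theorem \ref{thm:WP:gDW}''; your observation that Ikawa's mixed-problem theory applies verbatim to second-order hyperbolic operators with smooth bounded lower-order coefficients is exactly the justification the paper is implicitly relying on.
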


\subsection{Regularity of solutions for the corresponding heat equation}
Following our previous study
\cite[section 2]{SoWa16_JDE},
we prepare the well-posedness
and regularity of solutions for
the initial-boundary problem of
the corresponding heat equation
with a general inhomogeneous term
\begin{align}\label{eq:h:inhom}
    \left\{ \begin{array}{ll}
       a(x) \partial_t v - \Delta v = G, &x \in \Omega, t>0,\\
       v(x,t) = 0, & x \in \partial \Omega, t>0,\\
        v(x,0) = v_0(x),
        &x \in \Omega.
        \end{array} \right.
\end{align}
Let
$d\mu = a(x) dx$
and we define
\begin{align}
    L^2_{d\mu} (\Omega)
    &=
    \left\{ 
    f \in L^2_{\textrm{loc}}(\Omega); \,
    \| f \|_{L^2_{d\mu}}=
    \left( \int_{\Omega} |f(x)|^2 \,d\mu \right)^{1/2} < \infty
    \right\},\\
    (f,g)_{L^2_{d\mu}}
    &:= \int_{\Omega} f(x) g(x) \,d\mu.
\end{align}
The operator
$-a(x)^{-1} \Delta$
is formally symmetric in
$L^2_{d\mu}(\Omega)$,
and its bilinear closed form is
defined by
\begin{align}
    \mathfrak{a}(u,v)
    &= \int_{\Omega} \nabla u(x) \cdot \nabla v(x) \,dx,\\
    D(\mathfrak{a})
    &=
    \left\{
    u \in L^2_{d\mu}(\Omega) \cap \dot{H}^1(\Omega) ;\,
    \int_{\Omega} \frac{\partial u}{\partial x_j} \varphi \,dx
    = - \int_{\Omega} u \frac{\partial \varphi}{\partial x_j} \,dx
    \ \text{for all} \ 
    \varphi \in C_0^{\infty}(\mathbb{R}^N)
    \right\}.
\end{align}
From \cite{SoWa16_JDE},
we have the Friedrichs extension
$-L$
of the operator
$-a(x)^{-1}\Delta$ in
$L^2_{d\mu}(\Omega)$.

\begin{lemma}[{\cite[Lemma 2.2]{SoWa16_JDE}}]
The operator $-L$ in $L^2_{d\mu}(\Omega)$
defined by
\begin{align}
    D(L) &= 
    \left\{
    u \in D(\mathfrak{a}) ;\,
    \exists f \in L^2_{d\mu}(\Omega)
    \ \text{s.t.}\ 
    \mathfrak{a}(u,v) = (f,v)_{L^2_{d\mu}}
    \ \text{for any} \ 
    v \in D(\mathfrak{a})
    \right\},\\
    -L u &= f
\end{align}
is nonnegative and selfadjoint in
$L^2_{d\mu}(\Omega)$.
Therefore,
$L$ generates an analytic semigroup
$T(t)$
on $L^2_{d\mu}(\Omega)$
satisfying
\begin{align}
    \| T(t) f \|_{L^2_{d\mu}}
    \le
    \| f \|_{L^2_{d\mu}},\quad
    \| L T(t) f \|_{L^2_{d\mu}}
    \le
    \frac{1}{t} \| f \|_{L^2_{d\mu}}
\end{align}
for any
$f \in L^2_{d\mu}(\Omega)$.
\end{lemma}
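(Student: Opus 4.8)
The plan is to identify $-L$ as the nonnegative selfadjoint operator attached to the closed symmetric form $\mathfrak{a}$ via Kato's first representation theorem, and then to read off the semigroup properties from the spectral theorem.

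First I would check that $\mathfrak{a}$ is a densely defined, symmetric, nonnegative, and \emph{closed} form on the Hilbert space $L^2_{d\mu}(\Omega)$. Symmetry and nonnegativity are immediate from $\mathfrak{a}(u,u) = \|\nabla u\|_{L^2(\Omega)}^2 \ge 0$. For density, note that $C_0^\infty(\Omega) \subset D(\mathfrak{a})$ and that $C_0^\infty(\Omega)$ is dense in $L^2_{d\mu}(\Omega)$, since the smooth positive weight $a$ makes the measures $d\mu = a(x)\,dx$ and $dx$ mutually absolutely continuous with locally bounded densities. For closedness, I would endow $D(\mathfrak{a})$ with the form norm $\|u\|_{\mathfrak{a}}^2 = \mathfrak{a}(u,u) + \|u\|_{L^2_{d\mu}}^2$ and take a $\|\cdot\|_{\mathfrak{a}}$-Cauchy sequence $(u_k)$; then $u_k \to u$ in $L^2_{d\mu}(\Omega)$ and $\nabla u_k \to g$ in $L^2(\Omega)^N$, and passing to the limit in $\int_\Omega u_k\,\partial_j\varphi\,dx = -\int_\Omega (\partial_j u_k)\varphi\,dx$ for $\varphi \in C_0^\infty(\mathbb{R}^N)$ identifies $g = \nabla u$ in the distributional sense, so $u$ belongs to $D(\mathfrak{a})$ and $u_k \to u$ in $\|\cdot\|_{\mathfrak{a}}$.

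Next I would invoke the first representation theorem for closed nonnegative symmetric forms: there is a unique nonnegative selfadjoint operator on $L^2_{d\mu}(\Omega)$, call it $-L$, with $D(L) \subset D(\mathfrak{a})$ and $\mathfrak{a}(u,v) = (-Lu,v)_{L^2_{d\mu}}$ for all $u \in D(L)$ and $v \in D(\mathfrak{a})$, and such that $u \in D(\mathfrak{a})$ lies in $D(L)$ precisely when $v \mapsto \mathfrak{a}(u,v)$ extends to an $L^2_{d\mu}(\Omega)$-bounded functional, i.e.\ when some $f \in L^2_{d\mu}(\Omega)$ satisfies $\mathfrak{a}(u,v) = (f,v)_{L^2_{d\mu}}$ for all $v \in D(\mathfrak{a})$, in which case $-Lu = f$. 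This is verbatim the operator written in the statement, so its nonnegativity and selfadjointness are established.

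Finally, with $-L$ nonnegative and selfadjoint, the spectral theorem provides a resolution of the identity $\{E_\lambda\}_{\lambda \ge 0}$ with $-L = \int_{[0,\infty)} \lambda\,dE_\lambda$, so that $L$ generates the bounded analytic semigroup $T(t) = e^{tL} = \int_{[0,\infty)} e^{-t\lambda}\,dE_\lambda$ on $L^2_{d\mu}(\Omega)$. The two estimates follow from elementary scalar bounds: $\sup_{\lambda \ge 0} e^{-t\lambda} = 1$ gives $\|T(t)f\|_{L^2_{d\mu}} \le \|f\|_{L^2_{d\mu}}$, while $LT(t) = -\int_{[0,\infty)} \lambda e^{-t\lambda}\,dE_\lambda$ together with $\sup_{\lambda \ge 0} \lambda e^{-t\lambda} = \tfrac{1}{et} \le \tfrac{1}{t}$ gives $\|LT(t)f\|_{L^2_{d\mu}} \le \tfrac{1}{t}\|f\|_{L^2_{d\mu}}$. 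The only genuinely delicate point is the closedness of $\mathfrak{a}$ in the first step, because $D(\mathfrak{a})$ mixes the weighted space $L^2_{d\mu}(\Omega)$ with the homogeneous Sobolev space $\dot{H}^1(\Omega)$; one must ensure that the distributional gradient passes to the limit and that the limit still lies in $L^2_{d\mu}(\Omega) \cap \dot{H}^1(\Omega)$. Everything else is a routine application of form theory and the functional calculus.
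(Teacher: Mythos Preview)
Your argument is correct and is the standard route: verify that $\mathfrak{a}$ is a densely defined, nonnegative, closed symmetric form, invoke Kato's first representation theorem to obtain the nonnegative selfadjoint operator $-L$, and then read off the contraction and smoothing bounds from the spectral calculus. The one point you rightly flag as delicate---closedness of $\mathfrak{a}$---is handled by your observation that $L^2_{d\mu}$-convergence implies $L^2_{\mathrm{loc}}$-convergence (since $a$ is bounded below on compact subsets of $\overline\Omega$), which is what lets you pass to the limit in the integration-by-parts identity with test functions $\varphi\in C_0^\infty(\mathbb{R}^N)$ and recover the boundary condition encoded in $D(\mathfrak{a})$.

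As for comparison: the paper does not prove this lemma at all. It is quoted verbatim from \cite[Lemma~2.2]{SoWa16_JDE}, and the surrounding text simply says that $-L$ is the Friedrichs extension of $-a(x)^{-1}\Delta$ in $L^2_{d\mu}(\Omega)$. That is exactly the operator produced by your form-theoretic construction, so your proof is precisely the argument one would expect to find in the cited reference.
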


We also recall the following
property proved in
\cite{SoWa16_JDE},
which will be used in Section 6:
\begin{lemma}[{\cite[Lemma 2.3]{SoWa16_JDE}}]\label{lem:D(L)}
We have
\begin{align}
    \left\{
    u \in H^2(\Omega) \cap H^1_0(\Omega) ;\,
    a(x)^{-1/2} \Delta u \in L^2(\Omega)
    \right\}
    \subset D(L).
\end{align}
\end{lemma}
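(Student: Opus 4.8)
The plan is to exhibit, for a given $u$ in the left-hand set, the element $f\in L^2_{d\mu}(\Omega)$ required by the definition of $D(L)$ and to verify the two conditions defining membership in $D(L)$. The only candidate is $f:=-a(x)^{-1}\Delta u$. First I would check $f\in L^2_{d\mu}(\Omega)$: since $a(x)|f(x)|^{2}=|a(x)^{-1/2}\Delta u(x)|^{2}$, the hypothesis $a^{-1/2}\Delta u\in L^{2}(\Omega)$ gives $\|f\|_{L^{2}_{d\mu}}=\|a^{-1/2}\Delta u\|_{L^{2}(\Omega)}<\infty$. I would also note $u\in D(\mathfrak a)$: one has $u\in H^{1}_{0}(\Omega)\subset\dot H^{1}(\Omega)$, the zero extension of $u$ to $\R^{N}$ belongs to $H^{1}(\R^{N})$, which is exactly the weak-derivative condition in the definition of $D(\mathfrak a)$, and $u\in L^{2}_{d\mu}(\Omega)$ since $a$ is bounded on $\overline\Omega$ under the standing hypothesis \eqref{a}, whence $\|u\|_{L^{2}_{d\mu}}\le\|a\|_{L^{\infty}(\Omega)}^{1/2}\|u\|_{L^{2}(\Omega)}$.

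The heart of the argument is the Green-type identity
\[
\int_{\Omega}\nabla u\cdot\nabla v\,dx=-\int_{\Omega}(\Delta u)\,v\,dx\qquad\text{for all }v\in D(\mathfrak a),
\]
since, granting it, $-\int_{\Omega}(\Delta u)v\,dx=\int_{\Omega}f\,v\,a\,dx=(f,v)_{L^{2}_{d\mu}}$, i.e.\ $\mathfrak a(u,v)=(f,v)_{L^{2}_{d\mu}}$, placing $u$ in $D(L)$ with $-Lu=f$, that is $Lu=a^{-1}\Delta u$. Because $v$ need not be compactly supported, a bare integration by parts is not available, so I would use cutoffs $\chi_{R}\in C_{0}^{\infty}(\R^{N})$ with $\chi_{R}\equiv1$ on $\{|x|\le R\}$, $\supp\chi_{R}\subset\{|x|\le2R\}$ and $|\nabla\chi_{R}|\le C/R$. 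For fixed $R$ the function $\chi_{R}v$ is compactly supported, lies in $H^{1}$ and vanishes a.e.\ outside $\Omega$, hence (using the smoothness of $\partial\Omega$) $\chi_{R}v\in H^{1}_{0}(\Omega)$, so integration by parts against $u\in H^{2}(\Omega)\cap H^{1}_{0}(\Omega)$ yields $\int_{\Omega}\nabla u\cdot\nabla(\chi_{R}v)\,dx=-\int_{\Omega}(\Delta u)\chi_{R}v\,dx$. Writing $\nabla(\chi_{R}v)=\chi_{R}\nabla v+v\nabla\chi_{R}$ and letting $R\to\infty$, the term $\int_{\Omega}\chi_{R}\nabla u\cdot\nabla v\,dx$ tends to $\int_{\Omega}\nabla u\cdot\nabla v\,dx$ and the right-hand side tends to $-\int_{\Omega}(\Delta u)v\,dx$ by dominated convergence, the latter because $(\Delta u)v=(a^{-1/2}\Delta u)(a^{1/2}v)\in L^{1}(\Omega)$.

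I expect the only genuine obstacle to be the cross term $\int_{\Omega}v\,\nabla u\cdot\nabla\chi_{R}\,dx$, which reflects the lack of any decay rate for $v$ itself; I would dispose of it using the weight. On the shell $S_{R}:=\{R\le|x|\le2R\}\cap\Omega$ the assumption \eqref{a} gives $a(x)\ge c\langle x\rangle^{-\alpha}\ge c'R^{-\alpha}$ for $R\ge1$, so $\|v\|_{L^{2}(S_{R})}^{2}\lesssim R^{\alpha}\int_{S_{R}}|v|^{2}a\,dx=R^{\alpha}\cdot o(1)$ as $R\to\infty$ because $v\in L^{2}_{d\mu}(\Omega)$; combined with $\|\nabla u\|_{L^{2}(S_{R})}\to0$ (as $\nabla u\in L^{2}(\Omega)$) this yields
\[
\Big|\int_{\Omega}v\,\nabla u\cdot\nabla\chi_{R}\,dx\Big|\le\frac{C}{R}\,\|\nabla u\|_{L^{2}(S_{R})}\,\|v\|_{L^{2}(S_{R})}\lesssim R^{\frac{\alpha}{2}-1}\,o(1)\longrightarrow0,
\]
the decisive point being $\alpha<2$. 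Passing to the limit in the previous displayed identity then proves the Green-type identity, hence $u\in D(L)$. The remaining points — that $\chi_{R}v\in H^{1}_{0}(\Omega)$, and that the weak-derivative condition on $v$ is the rigorous substitute for ``$v=0$ on $\partial\Omega$'' — are routine given that $\partial\Omega$ is smooth, so I do not anticipate further difficulty.
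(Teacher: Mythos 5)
Your proposal is correct: you verify membership in $D(L)$ exactly as the definition demands, with $f=-a^{-1}\Delta u$, and the two delicate points are handled properly — the hypothesis $a^{-1/2}\Delta u\in L^2(\Omega)$ is what makes $(\Delta u)v$ integrable against $v\in L^2_{d\mu}(\Omega)$, and the lower bound $a(x)\gtrsim \langle x\rangle^{-\alpha}$ with $\alpha<2$ kills the cutoff cross term. Note that the paper itself gives no proof of this lemma (it is quoted from \cite[Lemma 2.3]{SoWa16_JDE}), and your cutoff/integration-by-parts verification is essentially the standard argument used there, so there is nothing further to reconcile.
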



For the inhomogeneous problem \eqref{eq:h:inhom},
applying \cite[Lemma 4.1.1, Proposition 4.1.6]{CaHa98},
we have the following well-posedness result.
\begin{theorem}\label{thm:h:wp}
Assume that
$v_0 \in D(L)$
and
$a(x)^{-1}G \in C^1([0,\infty); L^2_{d\mu}(\Omega))$.
Then, the function
$v$
defined by
\begin{align}
    v(t) = T(t)v_0 + \int_0^t T(t-s) (a(x)^{-1}G(s)) \,ds
\end{align}
is the unique solution to the problem
\eqref{eq:h:inhom} satisfying
\begin{align}
    v \in C([0,\infty); D(L)) \cap
    C^1([0,\infty); L^2_{d\mu}(\Omega)).
\end{align}
\end{theorem}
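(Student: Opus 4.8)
The plan is to recast \eqref{eq:h:inhom} as an abstract inhomogeneous Cauchy problem in the Hilbert space $X = L^2_{d\mu}(\Omega)$ and to invoke the standard theory of evolution equations governed by the generator of a $C_0$-semigroup. Dividing the equation $a(x)\pa_t v - \Delta v = G$ by $a(x)$ formally yields
\begin{align}
    v'(t) = L v(t) + f(t) \quad (t>0), \qquad v(0) = v_0,
\end{align}
where $f(t) := a(x)^{-1} G(t)$ and $L$ is the nonnegative selfadjoint operator with domain $D(L)$ constructed in the preceding lemma, which generates the analytic semigroup $T(t)$ on $X$. The hypotheses of the theorem, namely $v_0 \in D(L)$ and $f \in C^1([0,\infty); X)$, are precisely the regularity needed to turn the Duhamel (variation-of-constants) formula into a strict solution.

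First I would observe that, since $T(t)$ is a contraction semigroup on $X$ and $f \in C([0,\infty);X)$, the candidate
\begin{align}
    v(t) = T(t) v_0 + \int_0^t T(t-s) f(s)\,ds
\end{align}
is well defined and belongs to $C([0,\infty); X)$. Then, using $v_0 \in D(L)$ and $f \in C^1([0,\infty);X)$, I would apply \cite[Lemma 4.1.1, Proposition 4.1.6]{CaHa98} to conclude that in fact $v \in C([0,\infty); D(L)) \cap C^1([0,\infty); X)$ and that $v'(t) = L v(t) + f(t)$ holds for every $t \ge 0$; equivalently, $a(x)\pa_t v(t) - \Delta v(t) = G(t)$ as an identity of locally integrable functions on $\Omega$, the membership $v(t) \in D(L)$ encodes the homogeneous Dirichlet condition, and $v(0) = v_0$. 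For uniqueness within this class I would use the usual argument: if $\tilde v$ is another solution of the stated regularity, then $[0,t] \ni s \mapsto T(t-s)\tilde v(s) \in X$ is $C^1$ with derivative $T(t-s)(\tilde v'(s) - L\tilde v(s)) = T(t-s) f(s)$, and integration over $[0,t]$ recovers the Duhamel formula, so $\tilde v = v$.

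Finally I would translate back to \eqref{eq:h:inhom} and record the additional spatial regularity when it is available: by Lemma \ref{lem:D(L)}, under the supplementary assumption $a(x)^{-1/2}\Delta v_0 \in L^2(\Omega)$ one has $v(t) \in H^2(\Omega)\cap H^1_0(\Omega)$ for every $t$, so the equation $a(x)\pa_t v - \Delta v = G$ holds pointwise almost everywhere and the boundary condition holds in the trace sense. The only genuine point requiring care is the bookkeeping that matches the $C^1$-in-time hypothesis on $f = a^{-1}G$ to exactly what the cited abstract results demand in order to produce a strict (rather than merely mild) solution, together with the verification --- already guaranteed by the Friedrichs-extension construction recalled above --- that the abstract operator $L$ coincides with $a(x)^{-1}\Delta$ subject to the Dirichlet condition; beyond this the proof is a direct citation.
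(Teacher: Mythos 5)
Your proposal is correct and is essentially the paper's own argument: the paper offers no proof beyond recasting \eqref{eq:h:inhom} as $v'=Lv+a^{-1}G$ in $L^2_{d\mu}(\Omega)$ and citing \cite[Lemma 4.1.1, Proposition 4.1.6]{CaHa98}, which is exactly your route (your Duhamel-based uniqueness argument is the standard one implicit in those results). One small caution on your closing aside: Lemma \ref{lem:D(L)} gives the inclusion $\{u\in H^2(\Omega)\cap H^1_0(\Omega):\,a^{-1/2}\Delta u\in L^2(\Omega)\}\subset D(L)$, not the reverse, so it does not by itself yield $v(t)\in H^2(\Omega)\cap H^1_0(\Omega)$; this does not affect the theorem, which only asserts $v\in C([0,\infty);D(L))\cap C^1([0,\infty);L^2_{d\mu}(\Omega))$.
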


Next, we discuss the higher order
regularity in time for the solution of
\eqref{eq:h:inhom}.
We note that, by a formal straightforward computation,
the initial values of
$\partial_t^j v$
for
$j \ge 1$
are given by
\begin{align}\label{eq:dtv0}
    \partial_t^j v(x,0)
    &= [a(x)^{-1} \Delta]^j v_0(x)
    + \sum_{l=0}^{j-1} [a(x)^{-1}\Delta]^l a(x)^{-1} \partial_t^{j-1-l} G (x,0).
\end{align}

\begin{theorem}\label{thm:reg:h}
Let $k \ge 1$ be an integer,
$v_0 \in D(L)$,
and
$a(x)^{-1}G \in C^{k+1}([0,\infty); L^2_{d\mu}(\Omega))$.
Assume that
$\partial_t^j v(x,0)$
defined by the right-hand side of
\eqref{eq:dtv0}
satisfies
$\partial_t^j v(x,0) \in D(L)$
for $j=1,\ldots,k$.
Then, the solution $v$ to \eqref{eq:h:inhom}
obtained by Theorem \ref{thm:h:wp}
belongs to
\begin{align}\label{eq:v:reg}
    C^{k}([0,\infty); D(L)) \cap
    C^{k+1} ([0,\infty); L^2_{d\mu}(\Omega)).
\end{align}
\end{theorem}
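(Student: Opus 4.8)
The plan is to prove Theorem~\ref{thm:reg:h} by induction on $k$, using Theorem~\ref{thm:h:wp} as the base case ($k=1$ essentially, though the statement for $k=1$ already requires a little work) and reducing the higher-order regularity to first-order regularity of a related inhomogeneous problem. The key observation is that if $v$ solves \eqref{eq:h:inhom}, then $w := \partial_t v$ should solve the \emph{same} type of equation $a(x)\partial_t w - \Delta w = \partial_t G$ with initial datum $w(x,0) = \partial_t v(x,0) = a(x)^{-1}\Delta v_0 + a(x)^{-1}G(x,0) = L v_0 + a(x)^{-1}G(x,0)$, which by hypothesis lies in $D(L)$. So the strategy is: differentiate the Duhamel formula once in time, identify the result as the Duhamel solution of the differentiated problem, verify that the new data $(w(x,0), \partial_t G)$ satisfy the hypotheses of Theorem~\ref{thm:h:wp} (and of Theorem~\ref{thm:reg:h} with $k$ replaced by $k-1$), and then invoke the inductive hypothesis.

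First I would make rigorous the claim that $v \in C^1([0,\infty);L^2_{d\mu})$ with $\partial_t v(x,0)$ given by \eqref{eq:dtv0} for $j=1$: this already follows from Theorem~\ref{thm:h:wp}, since $v(t) = T(t)v_0 + \int_0^t T(t-s)(a^{-1}G(s))\,ds$ and, using $v_0 \in D(L)$ and $a^{-1}G \in C^1$, one differentiates to get $\partial_t v(t) = L\,T(t)v_0 + a^{-1}G(t) + \int_0^t T(t-s)\,\partial_s(a^{-1}G)(s)\,ds$, where the boundary term at $s=t$ contributes $a^{-1}G(t)$ and at $s=0$ is absorbed; evaluating at $t=0$ gives $\partial_t v(x,0) = Lv_0 + a^{-1}G(x,0)$, matching \eqref{eq:dtv0}. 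Rewriting this, $w = \partial_t v$ satisfies $w(t) = T(t)w_0 + \int_0^t T(t-s)(a^{-1}\tilde G(s))\,ds$ with $w_0 = \partial_t v(x,0) \in D(L)$ and $\tilde G = \partial_t G$, i.e.\ $a^{-1}\tilde G \in C^k([0,\infty);L^2_{d\mu})$. This is precisely the Duhamel solution, per Theorem~\ref{thm:h:wp}, of the inhomogeneous heat problem \eqref{eq:h:inhom} with data $(w_0,\tilde G)$. The compatibility hypotheses required to apply Theorem~\ref{thm:reg:h} with order $k-1$ to this new problem — namely that $\partial_t^j w(x,0) \in D(L)$ for $j=1,\ldots,k-1$ — translate, via \eqref{eq:dtv0} applied to $w$, into exactly the conditions $\partial_t^{j+1} v(x,0) \in D(L)$ for $j=1,\ldots,k-1$, which are among the hypotheses on $v$. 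Hence by induction $w \in C^{k-1}([0,\infty);D(L)) \cap C^k([0,\infty);L^2_{d\mu})$, and integrating once, $v \in C^k([0,\infty);D(L)) \cap C^{k+1}([0,\infty);L^2_{d\mu})$, which is \eqref{eq:v:reg}.

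There is one point of care in the induction step: I must check that the algebraic identity relating $\partial_t^{j+1}v(x,0)$ (computed from the formula for $v$) to $\partial_t^{j}w(x,0)$ (computed from the formula for $w$ with data $(w_0,\partial_t G)$) is consistent with \eqref{eq:dtv0}. This amounts to verifying that $[a^{-1}\Delta]^j w_0 + \sum_{l=0}^{j-1}[a^{-1}\Delta]^l a^{-1}\partial_t^{j-1-l}(\partial_t G)(x,0)$, with $w_0 = L v_0 + a^{-1}G(x,0) = a^{-1}\Delta v_0 + a^{-1}G(x,0)$, equals $[a^{-1}\Delta]^{j+1}v_0 + \sum_{l=0}^{j}[a^{-1}\Delta]^l a^{-1}\partial_t^{j-l}G(x,0)$ — a routine index shift. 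The main obstacle, such as it is, lies in justifying the term-by-term differentiation of the Duhamel integral and the bootstrapping of its regularity: one must know that $s \mapsto T(t-s)(a^{-1}G(s))$ is continuously differentiable with values in $L^2_{d\mu}$ and that the resulting integral inherits the regularity, which is exactly the content of \cite[Lemma 4.1.1, Proposition 4.1.6]{CaHa98} cited before Theorem~\ref{thm:h:wp}; invoking those lemmas repeatedly (or rather once per induction step) handles it cleanly, so I would phrase the induction so that at each stage I only ever apply the already-established Theorem~\ref{thm:h:wp} to a freshly-differentiated problem rather than attempting a direct high-order estimate.
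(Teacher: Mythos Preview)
Your proposal is correct and follows essentially the same approach as the paper: both argue by induction, showing that $w=\partial_t v$ is the Duhamel solution of the differentiated problem with data $(\partial_t v(x,0),\partial_t G)$, then invoke Theorem~\ref{thm:h:wp} (and the inductive hypothesis) to upgrade the regularity of $v$. The only cosmetic difference is that the paper first defines $\psi$ as the solution of the differentiated problem via Theorem~\ref{thm:h:wp} and then verifies $\psi=\partial_t v$ by manipulating the Duhamel formula, whereas you differentiate $v$ first and then identify the result; the content is the same.
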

\begin{proof}
When $k = 1$,
let $\psi = \psi(x,t)$
be the solution of \eqref{eq:h:inhom}
with the inhomogeneous term 
$\partial_t G$
and the initial data
$\psi(x,0) = a(x)^{-1}\Delta v_0(x) + a(x)^{-1}G(x,0)$.
Then, by Theorem \ref{thm:h:wp},
$\psi$ is given by
\begin{align}
    \psi(t)
    &=
    T(t) [a(x)^{-1}\Delta v_0(x) + a(x)^{-1}G(x,0)]
    + \int_0^t T(t-s) (a(x)^{-1}\partial_s G(s)) \,ds \\
    &=
    \partial_t T(t) v_0
    + T(t) [a(x)^{-1}G(x,0)] \\
    &\quad
    + \left[ T(t-s) (a(x)^{-1}G(s)) \right]_0^t
    + \int_0^t \partial_t T(t-s) (a(x)^{-1} G(s)) \,ds \\
    &=
    \partial_t T(t) v_0
    + a(x)^{-1}G(t) \\
    &\quad
    + \partial_t \int_0^t T(t-s) (a(x)^{-1}G(s)) \,ds
    - a(x)^{-1} G(t) \\
    &= \partial_t v(t).
\end{align}
Since
$\psi \in C([0,\infty);D(L)) \cap
C^1([0,\infty); L^2_{d\mu}(\Omega))$,
we have
\begin{align}
    v \in C^1([0,\infty); D(L))
    \cap C^2([0,\infty); L^2_{d\mu}(\Omega)),
\end{align}
that is, the assertion when $k=1$ is proved.
The general case
$k \ge 1$
can be proved in the same way with induction,
and we omit the detail.
\end{proof}

\subsection{A decomposition lemma}
In the following two subsections,
we give the idea of the
asymptotic expansion of
the solution of the damped wave equation
\eqref{dwx}.
To simplify the discussion,
we only give formal computation here.
The justification and the complete proof
of the asymptotic expansion will be 
given in Section 6.

Related to the initial-boundary value problem of
the damped wave equation \eqref{eq:gDW},
we consider the parabolic problem
with the same inhomogeneous term
$F$
and the initial data
$w_0(x) + a(x)^{-1}w_1(x)$:
\begin{align}\label{eq:gH}
     \left\{ \begin{array}{ll}
       a(x) \partial_t V - \Delta V = F, &x \in \Omega, t>0,\\
       V(x,t) = 0, & x \in \partial \Omega, t>0,\\
        V(x,0) = w_0(x) + a(x)^{-1} w_1(x),
        &x \in \Omega.
        \end{array} \right.
\end{align}
For the solution
$V$ of the above problem,
we further consider
the following initial-boundary value problem
of the damped wave equation
with the inhomogeneous term
$-\partial_t V$
and the initial data
$(U,\partial_t U)(x,0)
=(0,-a(x)^{-1}w_1(x))$:
\begin{align}
\label{eq:gU}
    \left\{ \begin{array}{ll}
        \partial_t^2 U - \Delta U + a(x) \partial_t U = - \partial_t V, &x \in \Omega, t>0,\\
        U(x,t) = 0, & x \in \partial \Omega, t>0,\\
        U(x,0) = 0, \ 
        \partial_t U(x,0) = - a(x)^{-1} w_1(x),
        &x \in \Omega.
        \end{array} \right.
\end{align}
Then, we have the following decomposition of
the solution $w$ to \eqref{eq:gDW}.
\begin{lemma}\label{lem_eq_U}
Let
$w$
be the solution of the damped wave equation \eqref{eq:gDW}
with the inhomogeneous term
$F$
and the initial data
$(w_0, w_1)$.
Let $V$ be the solution of the parabolic problem \eqref{eq:gH}, and
let $U$ be the solution of the
problem \eqref{eq:gU}.
Then, we have
\begin{align}
\label{u_v_U}
    w = V + \partial_t U.
\end{align}
\end{lemma}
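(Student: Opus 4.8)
The plan is to verify the identity \eqref{u_v_U} by a direct computation: set $w^* := V + \partial_t U$, show that $w^*$ solves exactly the same initial-boundary value problem \eqref{eq:gDW} as $w$, and then invoke the uniqueness part of Theorem~\ref{thm:Ikawa:1} (or Theorem~\ref{thm:WP:gDW} if one works in weighted spaces) to conclude $w = w^*$. Of course, for this argument to make sense one must first ensure that $V$ and $U$ have enough regularity that $w^* = V + \partial_t U$ is an admissible solution in the class \eqref{eq:sol:gDW:tr}; in the formal discussion of this subsection we simply assume this, deferring the rigorous regularity bookkeeping to Section~6 via Theorems~\ref{thm:reg:gDW} and \ref{thm:reg:h}.

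First I would compute the equation satisfied by $w^*$. Applying $\partial_t$ to the damped wave equation in \eqref{eq:gU} gives
\begin{align*}
    \partial_t^2 (\partial_t U) - \Delta (\partial_t U) + a(x)\partial_t(\partial_t U) = -\partial_t^2 V.
\end{align*}
On the other hand, the parabolic equation in \eqref{eq:gH} reads $a(x)\partial_t V - \Delta V = F$; differentiating in $t$ yields $a(x)\partial_t^2 V - \Delta \partial_t V = \partial_t F$, but more usefully, we keep the undifferentiated form and add. Adding the two equations for $V$ and $\partial_t U$:
\begin{align*}
    \partial_t^2 w^* - \Delta w^* + a(x)\partial_t w^*
    &= \bigl(a(x)\partial_t V - \Delta V\bigr) + \partial_t^2 V + \bigl(\partial_t^2(\partial_t U) - \Delta\partial_t U + a(x)\partial_t^2 U\bigr) - \partial_t^2 V \\
    &= F + \partial_t^2 V - \partial_t^2 V = F,
\end{align*}
where I have used $\partial_t^2 w^* = \partial_t^2 V + \partial_t^2(\partial_t U)$, $\Delta w^* = \Delta V + \Delta \partial_t U$, and $a(x)\partial_t w^* = a(x)\partial_t V + a(x)\partial_t^2 U$, then substituted the $V$-equation and the differentiated $U$-equation. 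So $w^*$ satisfies the PDE in \eqref{eq:gDW}. The homogeneous Dirichlet condition on $\partial\Omega$ is immediate since both $V$ and $U$ (hence $\partial_t U$) vanish there.

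Next I would check the initial data. At $t=0$, $w^*(x,0) = V(x,0) + \partial_t U(x,0) = \bigl(w_0(x) + a(x)^{-1}w_1(x)\bigr) + \bigl(-a(x)^{-1}w_1(x)\bigr) = w_0(x)$, using the initial conditions in \eqref{eq:gH} and \eqref{eq:gU}. For the time derivative, $\partial_t w^*(x,0) = \partial_t V(x,0) + \partial_t^2 U(x,0)$. From the parabolic equation, $\partial_t V(x,0) = a(x)^{-1}\bigl(\Delta V(x,0) + F(x,0)\bigr)$; from the damped wave equation for $U$ evaluated at $t=0$, $\partial_t^2 U(x,0) = \Delta U(x,0) - a(x)\partial_t U(x,0) - \partial_t V(x,0) = 0 + w_1(x) - \partial_t V(x,0)$, since $U(x,0)=0$ and $-a(x)\partial_t U(x,0) = w_1(x)$. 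Adding these, the $\partial_t V(x,0)$ terms cancel and we get $\partial_t w^*(x,0) = w_1(x)$, matching \eqref{eq:gDW}. Therefore $w^*$ and $w$ solve the same problem, and uniqueness gives $w = w^* = V + \partial_t U$.

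The main obstacle is not the algebra above, which is routine, but the regularity justification needed to make each step legitimate: one must know that $V$ is twice differentiable in $t$ with values in a space where $\Delta V$ makes sense, that $U$ is three times differentiable in $t$, and that the pointwise evaluations at $t=0$ are valid. In the present formal subsection this is simply assumed; the honest version of the argument, carried out in Section~6, will feed the initial data \eqref{ass:thm:asym} through Theorems~\ref{thm:reg:gDW} and \ref{thm:reg:h}, checking the compatibility conditions so that $V \in C^2([0,\infty);H^{0,m}(\Omega))$ (indeed better) and $U$ lies in a correspondingly higher-order regularity class, after which the computation above becomes rigorous verbatim.
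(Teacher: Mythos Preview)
Your proof is correct and follows the same strategy as the paper: set $w^* = V + \partial_t U$, verify that $w^*$ solves \eqref{eq:gDW} with the same data, and conclude by uniqueness. The paper's write-up is marginally more economical---it first records the identity $\partial_t(V+\partial_t U) = \Delta U - a(x)\partial_t U$ directly from the $U$-equation and then differentiates that, so that neither $\partial_t^2 V$ nor $\partial_t^3 U$ ever appears---but this is only a bookkeeping difference, and (as you correctly note) both computations are formal in this subsection. One small slip: in your long displayed line the trailing $-\partial_t^2 V$ is spurious; the correct regrouping is $(a\partial_t V - \Delta V) + \partial_t^2 V + (\partial_t^3 U - \Delta\partial_t U + a\partial_t^2 U)$, and the last bracket then equals $-\partial_t^2 V$.
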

\begin{proof}
Let
$\tilde{w} = V + \partial_t U$.
Then, we have
\begin{align*}
    \tilde{w}(x,0)
    = V(x,0) + \partial_t U(x,0)
    = w_0(x) + a(x)^{-1} w_1(x)
        - a(x)^{-1} w_1(x)
    = w_0(x).
\end{align*}
Also, by \eqref{eq:gU}, we obtain
\begin{align}
\label{pt_til_u}
    \partial_t \tilde{w}
    = \partial_t V + \partial_t^2 U
    = \Delta U - a(x) \partial_t U.
\end{align}
This implies
$\partial_t \tilde{w}(x,0)
= \Delta U(x,0) - a(x) \partial_t U(x,0)
= w_1(x)$.
Finally, differentiating
\eqref{pt_til_u} again,
and using the relation
$\partial_t^2 U = \partial_t \tilde{w} - \partial_t V$,
we deduce
\begin{align*}
    \partial_t^2 \tilde{w}
    &= \Delta \partial_t U - a(x) \partial_t^2 U \\
    &= \Delta (\tilde{w} - V)
        - a(x) ( \partial_t \tilde{w}  - \partial_t V) \\
    &= \Delta \tilde{w} - a(x) \partial_t \tilde{w} + F.
\end{align*}
Consequently,
$\tilde{w}$
is the solution of \eqref{eq:gDW}, and hence,
the uniqueness shows $\tilde{w} = w$.
This completes the proof.
\end{proof}

%
\subsection{Derivation of the asymptotic expansion}
Let
$u$
be the solution of \eqref{dwx}.
To expand $u$ in terms of solutions of
the corresponding parabolic problem,
we consider functions
$V_0, V_1, \ldots, V_n$
and the remainder terms
$U_1, U_2, \ldots, U_{n+1}$
successively defined in the following way:
first, we define
$V_0$ by
\begin{align}\label{eq:V_0}
     \left\{ \begin{array}{ll}
       a(x) \partial_t V_0 - \Delta V_0 = 0, &x \in \Omega, t>0,\\
       V_0(x,t) = 0, & x \in \partial \Omega, t>0,\\
        V_0(x,0) = u_0(x) + a(x)^{-1} u_1(x),
        &x \in \Omega.
        \end{array} \right.
\end{align}
and $U_1$ by
\begin{align}
\label{eq:U_1}
    \left\{ \begin{array}{ll}
        \partial_t^2 U_1 - \Delta U_1 + a(x) \partial_t U_1 = - \partial_t V_0, &x \in \Omega, t>0,\\
        U_1(x,t) = 0, & x \in \partial \Omega, t>0,\\
        U_1(x,0) = 0, \ 
        \partial_t U_1(x,0) = - a(x)^{-1} u_1(x),
        &x \in \Omega.
        \end{array} \right.
\end{align}
Then, by Lemma \ref{lem_eq_U}
we have the first decomposition
$u = V_0 + \partial_t U_1$.
According to the experiences, we expect that
this is the first-order asymptotic expansion of $u$
and therefore
$\partial_t U_1$ can be regarded as
a perturbation.
Next, to obtain the second-order expansion,
we further consider the decomposition of
$U_1$
in terms of corresponding parabolic problem.
Namely, we define $V_1$ by
\begin{align}\label{eq:V_1}
     \left\{ \begin{array}{ll}
       a(x) \partial_t V_1 - \Delta V_1 = - \partial_t V_0, &x \in \Omega, t>0,\\
       V_1(x,t) = 0, & x \in \partial \Omega, t>0,\\
        V_1(x,0) = - a(x)^{-2} u_1(x),
        &x \in \Omega.
        \end{array} \right.
\end{align}
and $U_2$ by
\begin{align}
\label{eq:U_2}
    \left\{ \begin{array}{ll}
        \partial_t^2 U_2 - \Delta U_2 + a(x) \partial_t U_2 = - \partial_t V_1, &x \in \Omega, t>0,\\
        U_2(x,t) = 0, & x \in \partial \Omega, t>0,\\
        U_2(x,0) = 0, \ 
        \partial_t U_2(x,0) = a(x)^{-2} u_1(x),
        &x \in \Omega.
        \end{array} \right.
\end{align}
Then, by Lemma \ref{lem_eq_U} again,
we have the decomposition
$U_1 = V_1 + \partial_t U_2$,
which implies
\begin{align}
    u = V_0 + \partial_t V_1 + \partial_t^2 U_2.
\end{align}
We expect that this gives the second-order
asymptotic expansion of $u$.
Repeating this procedure
for
$j = 2,3,\ldots, n$
we successively define
$V_j$ by
\begin{align}\label{eq:V_j}
     \left\{ \begin{array}{ll}
       a(x) \partial_t V_j - \Delta V_j = - \partial_t V_{j-1}, &x \in \Omega, t>0,\\
       V_j(x,t) = 0, & x \in \partial \Omega, t>0,\\
        V_j(x,0) = - (-a(x))^{-j-1} u_1(x),
        &x \in \Omega
        \end{array} \right.
\end{align}
and $U_{n+1}$ by
\begin{align}
\label{eq:U_n+1}
    \left\{ \begin{array}{ll}
        \partial_t^2 U_{n+1} - \Delta U_{n+1} + a(x) \partial_t U_{n+1} = - \partial_t V_n, &x \in \Omega, t>0,\\
        U_{n+1}(x,t) = 0, & x \in \partial \Omega, t>0,\\
        U_{n+1}(x,0) = 0, \ 
        \partial_t U_{n+1}(x,0) = (-a(x))^{-n-1} u_1(x),
        &x \in \Omega.
        \end{array} \right.
\end{align}
Then, we can have the
expected higher order decomposition
\begin{align}\label{eq:u_decomp}
    u = V_0 + \partial_t V_1
        + \partial_t^2 V_2 + \cdots
        + \partial_t^n V_n
        + \partial_t^{n+1} U_{n+1}.
\end{align}
By Theorems \ref{thm:WP:gDW} and \ref{thm:reg:gDW},
the existence, uniqueness, and regularity
of the solution
$U_{n+1}$
to \eqref{eq:U_n+1}
can be obtained
from the assumptions on the initial data
of Theorem \ref{thm:asym}.
The detail will be discussed in Section 6.

In the following sections,
we give energy estimates for
$V_0, V_1, \ldots, V_n$
and $U_{n+1}$
to justify that \eqref{eq:u_decomp}
actually gives the $n$-th order
asymptotic expansion of
$u$.

\section{
Energy estimates for the heat equation
}

We apply the weighted energy method
to obtain the decay estimate of
the parabolic problem
\begin{align}\label{eq:gH2}
     \left\{ \begin{array}{ll}
       a(x) \partial_t v - \Delta v = G, &x \in \Omega, t>0,\\
       v(x,t) = 0, & x \in \partial \Omega, t>0,\\
        v(x,0) = v_0(x),
        &x \in \Omega.
        \end{array} \right.
\end{align}

The goal of this section is
the following weighted energy estimates for
higher order derivatives of solutions to \eqref{eq:gH2}.
\begin{theorem}\label{thm:v}
Let
$k \ge 0$ be an integer,
$\delta \in (0,1/2)$,
$\varepsilon \in (0,1/2)$,
$\lambda \in [0,(1-2\delta)\gamma_{\ep})$
(see \eqref{gammatilde} for the definition of $\gamma_{\varepsilon}$),
$t_0 \ge 1$
and
$\beta = \lambda/(1-2\delta)$.
Let
$v_0 \in D(L)$,
$a(x)^{-1}G \in C^{k+1}([0,\infty); L^2_{d\mu}(\Omega))$,
and
let
$v$
be the corresponding solution of \eqref{eq:gH2}.
Moreover, we assume that
$\partial_t^j v(x,0)$
given by the right-hand side of
\eqref{eq:dtv0} satisfies
\begin{align}
    \partial_t^j v(x,0)
    &\in
    D(L) \cap 
    H^{0,(\lambda+2j)\frac{2-\alpha}{2}-\frac{\alpha}{2}}(\Omega),\\
    \nabla \partial_t^j v(x,0)
    &\in
    H^{0,(\lambda+1+2j)\frac{2-\alpha}{2}}(\Omega),
\end{align}
and
\begin{align}
    \int_{\Omega} a(x)^{-1} |\partial_t^j G(x,t)|^2 \Psi(x,t;t_0)^{\lambda+1+2j} \,dx
    \in L^1(0,\infty)
\end{align}
for $j=0,1,\ldots,k$.
Then, we have
\begin{align}
\label{eq:thm:v:conclusion:1}
    &\int_{\Omega} a(x) |\partial_t^j v(x,t)|^2 \Psi(x,t;t_0)^{\lambda+2j} \,dx
    \in L^{\infty}(0,\infty),\\
\label{eq:thm:v:conclusion:2}
    &\int_{\Omega} |\nabla \partial_t^j v(x,t)|^2 \Psi(x,t;t_0)^{\lambda+2j} \,dx
    \in L^{1}(0,\infty),\\
\label{eq:thm:v:conclusion:3}
    &\int_{\Omega} |\nabla \partial_t^j v(x,t)|^2 \Psi(x,t;t_0)^{\lambda+1+2j} \,dx
    \in L^{\infty}(0,\infty),\\
\label{eq:thm:v:conclusion:4}
    &\int_{\Omega} a(x) |\partial_t^{j+1} v(x,t)|^2 \Psi(x,t;t_0)^{\lambda+1+2j} \,dx
    \in L^1(0,\infty)
\end{align}
for $j=0,1,\ldots, k$.
\end{theorem}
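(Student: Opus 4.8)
The plan is to argue by induction on $j$. The key observation is that $w_j:=\partial_t^j v$ solves an equation of the same form, $a(x)\partial_t w_j-\Delta w_j=\partial_t^j G$, with initial datum given by the right-hand side of \eqref{eq:dtv0} and with the weight exponent $\lambda$ replaced by $\lambda+2j$; all the time differentiations and the weighted integrations by parts below are legitimate by Theorem \ref{thm:reg:h} together with the weighted regularity hypotheses, after the usual spatial truncation-and-limit argument on the unbounded domain, which I will not spell out.

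\emph{Base case, low weight.} For $j=0$ I test $a\partial_t v-\Delta v=G$ against $v\,\Phi_{\beta,\varepsilon}^{-1+2\delta}$, where $\Phi_{\beta,\varepsilon}=\Phi_{\beta,\varepsilon}(\cdot,\cdot\,;t_0)$ is the supersolution from Definition \ref{phi.beta.ep} and $\beta(1-2\delta)=\lambda$. The parabolic term yields $\frac12\frac{d}{dt}\int_\Omega a v^2\Phi_{\beta,\varepsilon}^{-1+2\delta}\,dx-\frac12\int_\Omega a v^2\,\partial_t(\Phi_{\beta,\varepsilon}^{-1+2\delta})\,dx$, and to the elliptic term I apply Lemma \ref{lem.deltaphi} with $\Phi=\Phi_{\beta,\varepsilon}$. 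The crucial point is that the two unfavourable contributions $-\frac12\int a v^2\partial_t(\Phi_{\beta,\varepsilon}^{-1+2\delta})$ and $-\frac{1-2\delta}{2}\int v^2(\Delta\Phi_{\beta,\varepsilon})\Phi_{\beta,\varepsilon}^{-2+2\delta}$ recombine into $\frac{1-2\delta}{2}\int v^2\Phi_{\beta,\varepsilon}^{-2+2\delta}(a\partial_t\Phi_{\beta,\varepsilon}-\Delta\Phi_{\beta,\varepsilon})$, which by Proposition \ref{prop:super-sol}(iv), and then (ii) to pass to powers of $\Psi$, is $\ge c\int_\Omega a v^2\Psi^{\lambda-1}\,dx$. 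This sign-definite term absorbs the source: by Young's inequality, $\int G v\Phi_{\beta,\varepsilon}^{-1+2\delta}$ is partly absorbed into $\int a v^2\Phi_{\beta,\varepsilon}^{-2+2\delta}\Psi^{-\beta-1}$ itself and partly estimated by $C\int a^{-1}G^2\Phi_{\beta,\varepsilon}^{2\delta}\Psi^{\beta+1}\le C\int a^{-1}G^2\Psi^{\lambda+1}$, using Proposition \ref{prop:super-sol}(ii) for $\Phi_{\beta,\varepsilon}^{2\delta}\lesssim\Psi^{-2\beta\delta}$. Integrating in $t$ — the initial energy $\int a v_0^2\Phi_{\beta,\varepsilon}(\cdot,0;t_0)^{-1+2\delta}\,dx$ being finite because $\Phi_{\beta,\varepsilon}(\cdot,0;t_0)^{-1+2\delta}\lesssim\Psi(\cdot,0;t_0)^{\lambda}\lesssim\langle x\rangle^{(2-\alpha)\lambda}$ by Proposition \ref{prop:super-sol}(iii) (this is where $\beta<\gamma_\varepsilon$ is used) and \eqref{A2}, and $a^{-1}G^2\Psi^{\lambda+1}\in L^1(0,\infty)$ by hypothesis — gives \eqref{eq:thm:v:conclusion:1} and \eqref{eq:thm:v:conclusion:2} at $j=0$, together with the auxiliary bound $\int_0^\infty\!\!\int_\Omega a v^2\Psi^{\lambda-1}\,dx\,dt<\infty$.

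\emph{The energy multiplier and the induction.} Testing $a\partial_t v-\Delta v=G$ against $\partial_t v\,\Psi^{\lambda+1}$ and integrating by parts produces $\int a(\partial_t v)^2\Psi^{\lambda+1}+\frac12\frac{d}{dt}\int|\nabla v|^2\Psi^{\lambda+1}$ on the left; on the right the term $\frac{\lambda+1}{2}\int|\nabla v|^2\Psi^{\lambda}$ is integrable in $t$ by \eqref{eq:thm:v:conclusion:2}, the cross term $\int(\nabla v\cdot\nabla\Psi^{\lambda+1})\,\partial_t v$ is handled by Young's inequality using $|\nabla\Psi^{\lambda+1}|^2/(a\Psi^{\lambda+1})\lesssim\Psi^{\lambda}$, which follows from $|\nabla A_\varepsilon|^2\lesssim aA_\varepsilon\lesssim a\Psi$ (see \eqref{A3}, \eqref{psi}), and the source term is controlled by $a^{-1}G^2\Psi^{\lambda+1}\in L^1$. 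Since the initial energy $\int|\nabla v_0|^2\Psi(\cdot,0;t_0)^{\lambda+1}\,dx$ is finite by $\nabla v_0\in H^{0,(\lambda+1)\frac{2-\alpha}{2}}(\Omega)$ and \eqref{A2}, this gives \eqref{eq:thm:v:conclusion:3} and \eqref{eq:thm:v:conclusion:4} at $j=0$. For $j\ge1$, assuming all four conclusions up to index $j-1$: the multiplier $w_j\Psi^{\lambda+2j}$ applied to the equation for $w_j=\partial_t^j v$ works exactly like the plain-$\Psi$ computation above, the only dangerous term on the right being $C\int a w_j^2\Psi^{\lambda+2j-1}\,dx=C\int a|\partial_t^j v|^2\Psi^{\lambda+1+2(j-1)}\,dx$, which lies in $L^1(0,\infty)$ precisely by \eqref{eq:thm:v:conclusion:4} at index $j-1$ (at $j=1$ this is the auxiliary bound from the base case). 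This yields \eqref{eq:thm:v:conclusion:1}, \eqref{eq:thm:v:conclusion:2} at index $j$ — with finite initial energy from the hypothesis on $\partial_t^j v(x,0)$, $a(x)\lesssim\langle x\rangle^{-\alpha}$, and \eqref{A2} — and then the multiplier $w_{j+1}\Psi^{\lambda+2j+1}$ applied to the same equation gives \eqref{eq:thm:v:conclusion:3}, \eqref{eq:thm:v:conclusion:4} at index $j$, closing the induction.

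\emph{Expected main obstacle.} The heart of the matter is the base case: recognizing that $-\frac12 a v^2\partial_t(\Phi_{\beta,\varepsilon}^{-1+2\delta})$ plus the $\Delta\Phi_{\beta,\varepsilon}$-term from Lemma \ref{lem.deltaphi} assembles into $a\partial_t\Phi_{\beta,\varepsilon}-\Delta\Phi_{\beta,\varepsilon}$, and then invoking Proposition \ref{prop:super-sol}(iv) to extract the sign-definite gain $\gtrsim\int_\Omega a v^2\Psi^{\lambda-1}\,dx$; without this every estimate fails to close, and it is also precisely the quantity that, shifted one index down, drives the induction. The remaining work is bookkeeping: tracking the powers of $\Psi$ (shifted by $2j$) so that $\Psi^{\lambda-1}$, $\Psi^{\lambda}$, $\Psi^{\lambda+1}$ land where they are needed, matching the initial-data and source hypotheses to them via \eqref{A2}, \eqref{A3}, \eqref{psi}, and carrying out the routine truncation that legitimizes the weighted integrations by parts on the unbounded $\Omega$.
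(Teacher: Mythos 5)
Your proposal is correct and follows essentially the same route as the paper: the base case uses the multiplier $v\,\Phi_{\beta,\varepsilon}^{-1+2\delta}$ with Lemma \ref{lem.deltaphi} and the supersolution property of Proposition \ref{prop:super-sol} (iv), then the multiplier $\partial_t v\,\Psi^{\lambda+1}$, and the induction for $j\ge 1$ uses the plain weights $\Psi^{\lambda+2j}$, $\Psi^{\lambda+1+2j}$ with the $L^1$-in-time bound \eqref{eq:thm:v:conclusion:4} at index $j-1$ feeding the dangerous term, exactly as in the paper's Lemmas \ref{lem:v1} and \ref{lem:v2}. (Only a cosmetic slip: at $j=1$ the needed input is \eqref{eq:thm:v:conclusion:4} with $j=0$, obtained from your energy-multiplier step, not the auxiliary bound $\int a v^2\Psi^{\lambda-1}\in L^1$.)
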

We note that Theorem \ref{thm:reg:h}
ensures the regularity property
\eqref{eq:v:reg} for the solution $v$.
Thus, it suffices to show the
estimates
\eqref{eq:thm:v:conclusion:1}--\eqref{eq:thm:v:conclusion:4}.

The proof of Theorem \ref{thm:v} is based on
an induction argument.
The following lemma is the first step.
\begin{lemma}\label{lem:v1}
Under the assumptions on Theorem \ref{thm:v} with
$k=0$,
we have \eqref{eq:thm:v:conclusion:1}--\eqref{eq:thm:v:conclusion:4}
for $k=0$.
\end{lemma}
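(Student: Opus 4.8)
The plan is to prove Lemma~\ref{lem:v1} by two successive weighted energy estimates for \eqref{eq:gH2}: the first will yield \eqref{eq:thm:v:conclusion:1} and \eqref{eq:thm:v:conclusion:2}, and the second \eqref{eq:thm:v:conclusion:3} and \eqref{eq:thm:v:conclusion:4} (all with $j=0$). Write $\Phi_{\beta,\varepsilon}=\Phi_{\beta,\varepsilon}(x,t;t_0)$ and $\Psi=\Psi(x,t;t_0)$. Since $\beta=\lambda/(1-2\delta)\in[0,\gamma_\varepsilon)$, parts (ii) and (iii) of Proposition~\ref{prop:super-sol} give the two-sided bound $\Phi_{\beta,\varepsilon}^{-1+2\delta}\sim\Psi^{\lambda}$ (recall $-1+2\delta<0$ and $\lambda=(1-2\delta)\beta$), which I will use freely to pass between the weight $\Phi_{\beta,\varepsilon}^{-1+2\delta}$ that arises in the computation and the weight $\Psi^{\lambda}$ appearing in the statement. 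All the energy identities below are to be understood as first derived for data that are smooth and compactly supported in $x$ and then obtained in general by a limiting argument; since the weights grow only polynomially while the weighted Sobolev hypotheses (propagated by Theorems~\ref{thm:h:wp} and \ref{thm:reg:h}) make all the relevant quantities finite, this limiting procedure is routine.

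\emph{First estimate.} I multiply $a\partial_t v-\Delta v=G$ by $v\,\Phi_{\beta,\varepsilon}^{-1+2\delta}$ and integrate over $\Omega$. The time term produces $\frac12\frac{d}{dt}\int_\Omega av^2\Phi_{\beta,\varepsilon}^{-1+2\delta}\,dx-\frac12\int_\Omega av^2\,\partial_t(\Phi_{\beta,\varepsilon}^{-1+2\delta})\,dx$, and for the elliptic term I apply Lemma~\ref{lem.deltaphi} with $\Phi=\Phi_{\beta,\varepsilon}$, bounding $-\int_\Omega v\Delta v\,\Phi_{\beta,\varepsilon}^{-1+2\delta}\,dx$ from below by $\frac{\delta}{1-\delta}\int_\Omega|\nabla v|^2\Phi_{\beta,\varepsilon}^{-1+2\delta}\,dx-\frac{1-2\delta}{2}\int_\Omega v^2(\Delta\Phi_{\beta,\varepsilon})\Phi_{\beta,\varepsilon}^{-2+2\delta}\,dx$. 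Using $\partial_t(\Phi_{\beta,\varepsilon}^{-1+2\delta})=-(1-2\delta)\Phi_{\beta,\varepsilon}^{-2+2\delta}\partial_t\Phi_{\beta,\varepsilon}$, the two error terms combine into $-\frac{1-2\delta}{2}\int_\Omega v^2\Phi_{\beta,\varepsilon}^{-2+2\delta}(a\partial_t\Phi_{\beta,\varepsilon}-\Delta\Phi_{\beta,\varepsilon})\,dx$, which by Proposition~\ref{prop:super-sol}(iv) and $\Phi_{\beta,\varepsilon}^{-2+2\delta}\Psi^{-\beta-1}\sim\Psi^{\lambda-1}$ is at most $-c\int_\Omega av^2\Psi^{\lambda-1}\,dx$ for some $c>0$; this extra dissipative (``ghost weight'') term is the crux. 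The source term $\int_\Omega Gv\,\Phi_{\beta,\varepsilon}^{-1+2\delta}\,dx$ is controlled by Young's inequality, namely $\int_\Omega|G||v|\Psi^{\lambda}\,dx\le\eta\int_\Omega av^2\Psi^{\lambda-1}\,dx+C_\eta\int_\Omega a^{-1}G^2\Psi^{\lambda+1}\,dx$, and choosing $\eta$ small absorbs the first term into the ghost weight. Integrating the resulting differential inequality from $0$ to $t$, the initial contribution is $\int_\Omega av_0^2\Phi_{\beta,\varepsilon}(x,0;t_0)^{-1+2\delta}\,dx\sim\int_\Omega v_0^2\langle x\rangle^{\lambda(2-\alpha)-\alpha}\,dx$, which is finite precisely because $v_0\in H^{0,\lambda\frac{2-\alpha}{2}-\frac{\alpha}{2}}(\Omega)$ (here $a(x)\sim\langle x\rangle^{-\alpha}$ and $\Psi(x,0;t_0)\sim\langle x\rangle^{2-\alpha}$), while $\int_\Omega a^{-1}G^2\Psi^{\lambda+1}\,dx\in L^1(0,\infty)$ by hypothesis. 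This gives \eqref{eq:thm:v:conclusion:1}, \eqref{eq:thm:v:conclusion:2}, and the auxiliary bound $\int_0^\infty\!\int_\Omega av^2\Psi^{\lambda-1}\,dx\,dt<\infty$.

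\emph{Second estimate.} I multiply the equation by $(\partial_t v)\Psi^{\lambda+1}$, integrate over $\Omega$, and integrate the elliptic term by parts; using $\partial_t\Psi=1$ this gives
\begin{align*}
&\frac12\frac{d}{dt}\int_\Omega|\nabla v|^2\Psi^{\lambda+1}\,dx+\int_\Omega a|\partial_t v|^2\Psi^{\lambda+1}\,dx \\
&\qquad=\frac{\lambda+1}{2}\int_\Omega|\nabla v|^2\Psi^{\lambda}\,dx-\int_\Omega(\partial_t v)\,\nabla v\cdot\nabla\Psi^{\lambda+1}\,dx+\int_\Omega G(\partial_t v)\Psi^{\lambda+1}\,dx.
\end{align*}
The first right-hand term is integrable in $t$ by \eqref{eq:thm:v:conclusion:2}. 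For the second, Lemma~\ref{lem_A_ep} (in particular \eqref{A3}, together with $A_\varepsilon\le\Psi$) gives $|\nabla\Psi^{\lambda+1}|=(\lambda+1)\Psi^{\lambda}|\nabla A_\varepsilon|\le C\,a^{1/2}\Psi^{\lambda+1/2}$, so Young's inequality bounds it by $\frac14\int_\Omega a|\partial_t v|^2\Psi^{\lambda+1}\,dx+C\int_\Omega|\nabla v|^2\Psi^{\lambda}\,dx$; the third term is $\le\frac14\int_\Omega a|\partial_t v|^2\Psi^{\lambda+1}\,dx+\int_\Omega a^{-1}G^2\Psi^{\lambda+1}\,dx$. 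Absorbing the two $\frac14$-terms and integrating in $t$ from $0$ to $t$, the initial contribution $\int_\Omega|\nabla v_0|^2\Psi(x,0;t_0)^{\lambda+1}\,dx\sim\int_\Omega|\nabla v_0|^2\langle x\rangle^{(\lambda+1)(2-\alpha)}\,dx$ is finite by the hypothesis $\nabla v_0\in H^{0,(\lambda+1)\frac{2-\alpha}{2}}(\Omega)$, and the whole right-hand side is $L^1(0,\infty)$; this yields \eqref{eq:thm:v:conclusion:3} and \eqref{eq:thm:v:conclusion:4}.

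I expect the main obstacle to lie not in the algebra — which is essentially forced once Proposition~\ref{prop:super-sol}, Lemma~\ref{lem.deltaphi}, and the comparabilities above are available — but in the bookkeeping of constants needed to guarantee that at each stage the bad terms are genuinely dominated by the available dissipation (by $\int_\Omega av^2\Psi^{\lambda-1}\,dx$ in the first step and by $\int_\Omega a|\partial_t v|^2\Psi^{\lambda+1}\,dx$ in the second), together with the approximation argument justifying the energy identities and the vanishing of boundary terms at spatial infinity. The restriction $\lambda<(1-2\delta)\gamma_\varepsilon$, equivalently $\beta<\gamma_\varepsilon$, enters exactly where Proposition~\ref{prop:super-sol}(iii)--(iv) is invoked, and is precisely what makes the ghost weight available.
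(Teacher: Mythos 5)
Your proposal is correct and follows essentially the same route as the paper: the first estimate with the weight $\Phi_{\beta,\varepsilon}^{-1+2\delta}$ using Lemma \ref{lem.deltaphi} and Proposition \ref{prop:super-sol} (iii)--(iv) to produce the dissipative term and absorb the source, and the second estimate with the weight $\Psi^{\lambda+1}$, integration by parts via the equation, and \eqref{A3} together with $A_\varepsilon\le\Psi$ to absorb the cross term into $-\int_\Omega a|\partial_t v|^2\Psi^{\lambda+1}\,dx$. The only difference is presentational (you multiply the equation by the weighted test functions rather than differentiating the energy functionals), which amounts to the same computation.
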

\begin{proof}
By Lemma \ref{lem.deltaphi},
Proposition \ref{prop:super-sol} (iii),
and the Schwarz inequality,
we calculate
\begin{align}
    \frac{d}{dt} \int_{\Omega} \frac{a(x) v^2}{\Phi_{\beta,\ep}^{1-2\delta}} \,dx
    &=
    2 \int_{\Omega} \frac{v \Delta v}{\Phi_{\beta,\ep}^{1-2\delta}} \,dx
    - (1-2\delta) \int_{\Omega}
        \frac{a(x) v^2 \partial_t \Phi_{\beta,\ep} }{\Phi_{\beta,\ep}^{2-2\delta}} \,dx \\
    &\quad +
        2\int_{\Omega} \frac{vG}{\Phi_{\beta,\ep}^{1-2\delta}} \,dx \\
    &\le - \frac{2\delta}{1-\delta}
        \int_{\Omega} \frac{|\nabla v|^2}{\Phi_{\beta,\ep}^{1-2\delta}} \,dx
            - (1-2\delta)
        \int_{\Omega}
            \frac{
                (a(x) \partial_t \Phi_{\beta,\ep} - \Delta \Phi_{\beta,\ep}) v^2
                }{\Phi_{\beta,\ep}^{2-2\delta}}
        \,dx \\
    &\quad +
        C \left( \int_{\Omega} a(x)v^2 \Psi^{\lambda-1} \,dx \right)^{1/2}
        \left( \int_{\Omega} a(x)^{-1} G^2
            \Psi^{\lambda+1} \,dx \right)^{1/2}.
\end{align}
From the Young inequality and
Proposition \ref{prop:super-sol} (iv),
we obtain
\begin{align}
    \frac{d}{dt} \int_{\Omega} \frac{a(x) v^2}{\Phi_{\beta,\ep}^{1-2\delta}} \,dx
    =
    - \frac{2\delta}{1-\delta}
        \int_{\Omega} \frac{|\nabla v|^2}{\Phi_{\beta,\ep}^{1-2\delta}} \,dx
        +
        C \int_{\Omega} a(x)^{-1} G^2
            \Psi^{\lambda+1} \,dx.
\end{align}
Integrating it over $[0,t]$
and using Proposition \ref{prop:super-sol} (iii), we deduce
\begin{align}
    &\int_{\Omega} a(x) v(x,t)^2 \Psi(x,t;t_0)^{\lambda} \,dx
    \in L^{\infty}(0,\infty),\\
\label{eq:vx:L1}
    &\int_{\Omega} |\nabla v(x,t)|^2 \Psi(x,t;t_0)^{\lambda} \,dx
    \in L^1(0,\infty),
\end{align}
Thus, we have \eqref{eq:thm:v:conclusion:1} and
\eqref{eq:thm:v:conclusion:2} in the case $j=0$.
We next compute
\begin{align}
\label{eq:nb_v_psi}
    \frac{d}{dt} \int_{\Omega} |\nabla v|^2 \Psi^{\lambda+1} \,dx
    &= (\lambda+1) \int_{\Omega} |\nabla v|^2 \Psi^{\lambda} \,dx
        + 2 \int_{\Omega} ( \nabla \partial_t v \cdot \nabla v ) \Psi^{\lambda+1} \,dx.
\end{align}
By \eqref{eq:vx:L1},
the first term of the right-hand side belongs to $L^1(0,\infty)$,
and by integration by parts, the second term is estimated as
\begin{align}
    &2 \int_{\Omega} ( \nabla \partial_t v \cdot \nabla v ) \Psi^{\lambda+1} \,dx \\
    &= -2 \int_{\Omega}
        (a(x) \partial_t v - G) \partial_t v \Psi^{\lambda+1} \,dx
        - (\lambda+1) \int_{\Omega} \partial_t v (\nabla v \cdot \nabla A_{\ep}(x))
                \Psi^{\lambda} \,dx \\
    &\le
        -2 \int_{\Omega} a(x) | \partial_t v|^2 \Psi^{\lambda+1} \,dx \\
    &\quad 
        + \frac{1}{2} \int_{\Omega} a(x) |\partial_t v|^2 \Psi^{\lambda+1} \,dx
        + 2 \int_{\Omega} a(x)^{-1} G^2 \Psi^{\lambda+1} \,dx \\
    &\quad 
        + \frac{1}{2} \int_{\Omega} a(x) | \partial_t v|^2 A_{\ep}(x) \Psi^{\lambda} \,dx
        + 2(\lambda+1)^2 \int_{\Omega} \frac{|\nabla A_{\ep}(x)|^2}{a(x)A_{\ep}(x)} |\nabla v|^2
                    \Psi^{\lambda} \,dx \\
    &\le - \int_{\Omega} a(x) | \partial_t v|^2 \Psi^{\lambda+1} \,dx \\
    &\quad
        +C \int_{\Omega} |\nabla v|^2 \Psi^{\lambda} \,dx
        + 2 \int_{\Omega} a(x)^{-1} G^2 \Psi^{\lambda+1} \,dx.
\label{eq:lem:v1:en1}
\end{align}
Here, we have used the property \eqref{A3} in Lemma \ref{lem_A_ep} and
the relation $A_{\ep}(x) \le \Psi(x,t;t_0)$,
which follows from the definition
$\Psi$
(see \eqref{psi}).
By \eqref{eq:vx:L1} and the assumption on $G$,
the last two terms of the right-hand side of above are in $L^1(0,\infty)$.
Thus, integrating \eqref{eq:nb_v_psi} over $[0,t]$, we conclude
\begin{align*}
    &\int_{\Omega} |\nabla v(x,t)|^2 \Psi(x,t;t_0)^{\lambda+1} \,dx
    \in L^{\infty}(0,\infty),\\
    &\int_{\Omega} a(x) | \partial_t v(x,t)|^2 \Psi(x,t;t_0)^{\lambda+1} \,dx
    \in L^1(0,\infty),
\end{align*}
that is, \eqref{eq:thm:v:conclusion:3} and \eqref{eq:thm:v:conclusion:4}
in the case $j=0$,
and the proof is now complete.
\end{proof}

\begin{remark}
It should be noted that
the integration by parts in the above proof
can be justified
completely
by the approximation
argument in the same as
\cite[Section 4]{SoWa21_JMSJ}.
\end{remark}

Next, we prove the following lemma,
which is the main part of the induction argument of
the proof of Theorem \ref{thm:v}.

\begin{lemma}\label{lem:v2}
Assume $a(x)$ satisfies \eqref{a}.
Let 
$\sigma \ge 0$,
$t_0 \ge 1$,
$v_0 \in D(L)$
and
$a(x)^{-1}G \in C^1([0,\infty); L^2_{d\mu}(\Omega))$.
Let
$v$
be the corresponding solution of \eqref{eq:gH2}.
We further assume
\begin{align}
    &v_0 \in
    H^{0,\frac{2-\alpha}{2}\sigma-\frac{\alpha}{2}}(\Omega),
    \quad
    \nabla v_0 \in
    H^{0,\frac{2-\alpha}{2}(\sigma + 1)}(\Omega),\\
\label{eq:lem:v2:indass2}
    &\int_{\Omega} a(x)^{-1} |G(x,t)|^2 \Psi(x,t;t_0)^{\sigma+1} \,dx
    \in L^1(0,\infty),
\end{align}
and also
\begin{align}
\label{eq:lem:v2:indass1}
    &\int_{\Omega} a(x) |v(x,t)|^2 \Psi(x,t;t_0)^{\sigma-1}\,dx
    \in L^1(0,\infty).
\end{align}
Then, we have
\begin{align}
\label{eq:lem:v2:conc2}
    &\int_{\Omega} a(x) |v(x,t)|^2
    \Psi(x,t;t_0)^{\sigma} \,dx
    \in L^{\infty} (0,\infty),\\
\label{eq:lem:v2:conc3}
    &\int_{\Omega} |\nabla v(x,t)|^2
    \Psi(x,t;t_0)^{\sigma} \,dx
    \in L^{1} (0,\infty),\\
\label{eq:lem:v2:conc4}
    &\int_{\Omega} |\nabla v(x,t)|^2
    \Psi(x,t;t_0)^{\sigma + 1} \,dx
    \in L^{\infty} (0,\infty),\\
\label{eq:lem:v2:conc1}
    &\int_{\Omega} a(x) |\partial_t v(x,t)|^2
        \Psi(x,t;t_0)^{\sigma+1} \,dx
    \in L^1(0,\infty).
\end{align}
\end{lemma}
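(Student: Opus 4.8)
The plan is to run a weighted energy estimate for \eqref{eq:gH2} against the supersolution $\Phi_{\beta,\ep}$ with the \emph{shifted} parameter $\beta = \sigma/(1-2\delta)$, mimicking the computation in Lemma \ref{lem:v1} but now exploiting the extra integrability hypothesis \eqref{eq:lem:v2:indass1} on $v$ itself (at weight $\sigma-1$), which plays the role that the trivial bound on $v_0$ played in the base case. Concretely, first I would differentiate $\int_\Omega a(x) v^2 \Phi_{\beta,\ep}^{-1+2\delta}\,dx$ in $t$, use Lemma \ref{lem.deltaphi} to convert $\int v\Delta v \Phi_{\beta,\ep}^{-1+2\delta}$ into $-\frac{2\delta}{1-\delta}\int |\nabla v|^2 \Phi_{\beta,\ep}^{-1+2\delta} + \frac{1-2\delta}{2}\int v^2(\Delta\Phi_{\beta,\ep})\Phi_{\beta,\ep}^{-2+2\delta}$, and combine the Laplacian term with the $\partial_t \Phi_{\beta,\ep}$ term to produce $-(1-2\delta)\int (a\partial_t\Phi_{\beta,\ep}-\Delta\Phi_{\beta,\ep}) v^2 \Phi_{\beta,\ep}^{-2+2\delta}$, which by Proposition \ref{prop:super-sol} (iv) is $\le -c\int a(x) v^2 \Psi^{-1}\Phi_{\beta,\ep}^{-2+2\delta}$; using Proposition \ref{prop:super-sol} (ii)--(iii) this absorbs into $-c\int a(x)v^2\Psi^{\sigma-1}\,dx$ after the identification $\Phi_{\beta,\ep}^{-1+2\delta}\sim \Psi^{\sigma}$. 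The source term $2\int vG\Phi_{\beta,\ep}^{-1+2\delta}$ is handled by Cauchy--Schwarz and Young against this good negative term and the hypothesis \eqref{eq:lem:v2:indass2}. Integrating over $[0,t]$ and invoking \eqref{eq:lem:v2:indass1} to control the bad-sign contribution gives \eqref{eq:lem:v2:conc2} and \eqref{eq:lem:v2:conc3}.

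Next, to get \eqref{eq:lem:v2:conc4} and \eqref{eq:lem:v2:conc1} I would repeat the second half of the proof of Lemma \ref{lem:v1} verbatim with $\lambda$ replaced by $\sigma$: differentiate $\int_\Omega |\nabla v|^2 \Psi^{\sigma+1}\,dx$, producing $(\sigma+1)\int|\nabla v|^2\Psi^\sigma\,dx$ (which is in $L^1$ by \eqref{eq:lem:v2:conc3}) plus $2\int \nabla\partial_t v\cdot\nabla v\,\Psi^{\sigma+1}\,dx$; integrate the latter by parts, use the equation $\Delta v = a\partial_t v - G$, and estimate the resulting terms with Young's inequality, the bound $A_\ep \le \Psi$, and property \eqref{A3} to dominate the $|\nabla v|^2$ terms, leaving $-\int a(x)|\partial_t v|^2\Psi^{\sigma+1}\,dx$ as a good negative term and $C\int a^{-1}G^2\Psi^{\sigma+1}\,dx \in L^1$ from \eqref{eq:lem:v2:indass2}. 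Integrating over $[0,t]$ yields the two remaining conclusions. Throughout, the integrations by parts and the differentiation under the integral sign are justified by the same cutoff/approximation procedure used in \cite[Section 4]{SoWa21_JMSJ}, as noted in the remark after Lemma \ref{lem:v1}; the regularity hypotheses $v_0\in D(L)$ and $a^{-1}G\in C^1([0,\infty);L^2_{d\mu})$ guarantee via Theorem \ref{thm:h:wp} that $v$ has enough smoothness for these manipulations, and the weighted hypotheses on $v_0$ and $\nabla v_0$ ensure the weighted quantities are finite at $t=0$.

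I expect the main obstacle to be the bookkeeping that matches the weight $\Phi_{\beta,\ep}^{-1+2\delta}$ against the polynomial weight $\Psi^{\sigma}$: one needs $\beta = \sigma/(1-2\delta)$ together with the constraint $\beta < \gamma_\ep$ (equivalently $\sigma < (1-2\delta)\gamma_\ep$) for Proposition \ref{prop:super-sol} (iii) to apply, and here $\sigma$ is only assumed $\ge 0$, so strictly speaking this lemma must be read with an implicit admissibility restriction on $\sigma$ (or with $\delta$ chosen small depending on $\sigma$) — this is exactly the range that will be respected when the lemma is iterated in the proof of Theorem \ref{thm:v}, where $\sigma$ runs through $\lambda, \lambda+1, \dots, \lambda+2k+1$ and $\lambda\in[0,(1-2\delta)\gamma_\ep)$. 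A secondary technical point is that the comparison $\Phi_{\beta,\ep}^{-1+2\delta}\sim\Psi^{\sigma}$ is only a two-sided bound up to constants (Proposition \ref{prop:super-sol} (ii)--(iii)), so one must be slightly careful that every time a two-sided bound is used it is used in the direction that preserves the inequality; this is routine but easy to slip on. The rest is a direct transcription of the base-case computation with the new exponent, so no genuinely new idea is needed beyond the one already present in Lemma \ref{lem:v1}.
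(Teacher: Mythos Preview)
Your second half (deriving \eqref{eq:lem:v2:conc4} and \eqref{eq:lem:v2:conc1} by differentiating $\int_\Omega |\nabla v|^2 \Psi^{\sigma+1}\,dx$) matches the paper exactly and is fine.

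The first half, however, has a genuine gap that you yourself flag but then dismiss incorrectly. Your approach via $\Phi_{\beta,\ep}$ with $\beta=\sigma/(1-2\delta)$ requires $\beta<\gamma_\ep$ in order for $\Phi_{\beta,\ep}$ to be positive (so that Lemma~\ref{lem.deltaphi} applies) and for the two-sided comparison $\Phi_{\beta,\ep}^{-1+2\delta}\sim\Psi^\sigma$ of Proposition~\ref{prop:super-sol}(iii) to hold. This forces $\sigma<(1-2\delta)\gamma_\ep$. But $\gamma_\ep<\frac{N-\alpha}{2-\alpha}$ is a fixed finite number, so no choice of $\delta$ rescues you once $\sigma\ge\frac{N-\alpha}{2-\alpha}$. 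And in the iteration of Theorem~\ref{thm:v} the lemma is invoked with $\sigma=\lambda+2j$ for $j=1,\dots,k$, which typically exceeds this bound already for $j=1$ (and certainly for large $j$). Your parenthetical ``this is exactly the range that will be respected'' is simply false.

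The paper avoids this obstruction by abandoning the supersolution $\Phi_{\beta,\ep}$ altogether in this step and working directly with the polynomial weight $\Psi^\sigma$. One differentiates $\int_\Omega a(x)|v|^2\Psi^\sigma\,dx$, uses the elementary identity $2v\Delta v=-2|\nabla v|^2+\Delta(|v|^2)$ and integration by parts, and then bounds $|\Delta(\Psi^\sigma)|\le C a(x)\Psi^{\sigma-1}$ via \eqref{A1}, \eqref{A3} and $A_\ep\le\Psi$. Every bad term that appears is of the form $C\int_\Omega a(x)|v|^2\Psi^{\sigma-1}\,dx$, which is in $L^1(0,\infty)$ by the hypothesis \eqref{eq:lem:v2:indass1}; the source term is split by Cauchy--Schwarz against the same quantity and \eqref{eq:lem:v2:indass2}. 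The point is that once the inductive hypothesis \eqref{eq:lem:v2:indass1} is available, the delicate supersolution estimate is no longer needed, and the argument places no upper restriction on $\sigma$. This is precisely why Lemma~\ref{lem:v1} (the base case, where no such hypothesis is available) uses $\Phi_{\beta,\ep}$, while Lemma~\ref{lem:v2} (the inductive step) does not.
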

\begin{remark}
In the proof of Theorem \ref{thm:v},
we will choose
$\sigma = \lambda + 2j$
for $j \in \mathbb{N}$.
\end{remark}
\begin{proof}[Proof of Lemma \ref{lem:v2}]
Suppose \eqref{eq:lem:v2:indass2}
and \eqref{eq:lem:v2:indass1}.
Similarly as the proof of Lemma \ref{lem:v1},
we compute
\begin{align}
    &\frac{d}{dt} \int_{\Omega} a(x) | v(x,t)|^2 \Psi(x,t;t_0)^{\sigma} \,dx \\
    &= 2 \int_{\Omega} v (\Delta v) \Psi^{\sigma} \,dx
     + \sigma \int_{\Omega} a(x) | v|^2 \Psi^{\sigma -1} \,dx \\
    &\quad + 2 \int_{\Omega} v G \Psi^{\sigma} \,dx
\label{eq:lem:v2:en1}
\end{align}
The second term of the right-hand side is in
$L^1(0,\infty)$
due to the assumption \eqref{eq:lem:v2:indass1}.
Noting the relation
$2v \Delta v = - 2|\nabla v|^2 + \Delta(|v|^2)$
and using the integration by parts,
we calculate the first term of the right-hand side as
\begin{align}
    2 \int_{\Omega} v (\Delta v) \Psi^{\sigma} \,dx
    &= - 2 \int_{\Omega} |\nabla v|^2 \Psi^{\sigma}\,dx
        + \int_{\Omega} | v|^2 \Delta (\Psi^{\sigma}) \,dx.
\end{align}
The last term of the above is
further estimated by
\begin{align}
    \int_{\Omega} |v|^2
    \left| \Delta (\Psi^{\sigma}) \right| \,dx
    &= \sigma \int_{\Omega} |v|^2
    \left|
        \Delta A_{\varepsilon} \Psi
        + (\sigma-1) |\nabla A_{\varepsilon}|^2
    \right|
    \Psi^{\sigma-2} \,dx \\
    &\le C \int_{\Omega} a(x) |v|^2
        \Psi^{\sigma-1} \,dx,
\end{align}
where we have used
\eqref{A1}, \eqref{A3}, and
$A_{\varepsilon} \le \Psi$.
Therefore, this term also belongs to
$L^1(0,\infty)$
by the assumption \eqref{eq:lem:v2:indass1}.
Finally, we apply the Schwarz inequality
to the last term of \eqref{eq:lem:v2:en1}
and obtain
\begin{align}
    2\int_{\Omega} v G \Psi^{\sigma} \,dx
    &\le \int_{\Omega} a(x) |v|^2
        \Psi^{\sigma-1} \,dx
        + \int_{\Omega} a(x)^{-1} |G|^2 \Psi^{\sigma+1} \,dx
\end{align}
and these are in $L^1(0,\infty)$
due to the assumptions \eqref{eq:lem:v2:indass2}
and \eqref{eq:lem:v2:indass1}.
Consequently, integrating \eqref{eq:lem:v2:en1}
over $[0,t]$,
we have
\begin{align}
    &\int_{\Omega} a(x) |v(x,t)|^2
    \Psi(x,t;t_0)^{\sigma} \,dx
    \in L^{\infty} (0,\infty),\\
    &\int_{\Omega} |\nabla v(x,t)|^2 \Psi(x,t;t_0)^{\sigma} \,dx
    \in L^1(0,\infty).
\end{align}
Thus, we have \eqref{eq:lem:v2:conc2} and \eqref{eq:lem:v2:conc3}.

Next, we compute
\begin{align}
    \frac{d}{dt} \int_{\Omega} |\nabla v(x,t)|^2 \Psi(x,t;t_0)^{\sigma+1} \,dx
    &=(\sigma+1)
    \int_{\Omega} |\nabla v|^2 \Psi^{\sigma} \,dx \\
    &\quad + 
    2 \int_{\Omega} (\nabla \partial_t v \cdot \nabla v) \Psi^{\sigma + 1} \,dx.
\label{eq:lem:v2:en2}
\end{align}
The first term of the right-hand side
belongs to $L^1(0,\infty)$ by \eqref{eq:lem:v2:conc3}.
The second term can be estimated
in completely the same way as \eqref{eq:lem:v1:en1},
and we have
\begin{align}
    2 \int_{\Omega} (\nabla \partial_t v \cdot \nabla v) \Psi^{\sigma + 1} \,dx
    &\le - \int_{\Omega} a(x) |\partial_t v|^2 \Psi^{\sigma+1} \,dx \\
    &\quad + C \int_{\Omega} |\nabla v|^2 \Psi^{\sigma} \,dx \\
    &\quad + C \int_{\Omega} a(x)^{-1} |G|^2 \Psi^{\sigma+1} \,dx.
\end{align}
By using
\eqref{eq:lem:v2:indass2}, and \eqref{eq:lem:v2:conc3},
the last two terms of above
belong to $L^1(0,\infty)$.
Finally, integrating \eqref{eq:lem:v2:en2} over $[0,t]$, we conclude 
\begin{align}
    &\int_{\Omega} |\nabla v(x,t)|^2
    \Psi(x,t;t_0)^{\sigma+1} \,dx
    \in L^{\infty} (0,\infty),\\
    &\int_{\Omega} a(x) |\partial_t v(x,t)|^2
    \Psi(x,t;t_0)^{\sigma+1} \,dx
    \in L^1(0,\infty).
\end{align}
This completes the proof of \eqref{eq:lem:v2:conc1} and \eqref{eq:lem:v2:conc4}.
\end{proof}
\begin{proof}[Proof of Theorem \ref{thm:v}]
We note that the case $k=0$ has been already proved by Lemma \ref{lem:v1}.
Let $k \ge 1$ be an integer.
Then, by Lemma \ref{lem:v1}, we have
\eqref{eq:thm:v:conclusion:1}--\eqref{eq:thm:v:conclusion:4}
in the case $j=0$.

Next,
for $j=1$,
we apply Lemma \ref{lem:v2} with
$\sigma = \lambda + 2j$
and with the replacement of
$v$ and $G$ by $\partial_t v$ and $\partial_t G$, respectively.
We remark that the condition \eqref{eq:lem:v2:indass1} with
$\sigma = \lambda + 2j$
is fulfilled
by virtue of \eqref{eq:thm:v:conclusion:4} with $j=0$.
Then, we obtain
\eqref{eq:lem:v2:conc2}--\eqref{eq:lem:v2:conc1} for
$\sigma = \lambda + 2j$
with the replacement of
$v$ by $\partial_t v$,
namely, we reach the conclusions
\eqref{eq:thm:v:conclusion:1}--\eqref{eq:thm:v:conclusion:4} for $j=1$.

The properties
\eqref{eq:thm:v:conclusion:1}--\eqref{eq:thm:v:conclusion:4} for $j=1$
allow us to apply again Lemma \ref{lem:v2} with
$\sigma = \lambda + 2j$,
$j=2$
and with the replacement of
$v$ and $G$ by $\partial_t^2 v$ and $\partial_t^2 G$, respectively.
Then, we can see that
\eqref{eq:thm:v:conclusion:1}--\eqref{eq:thm:v:conclusion:4} for $j=2$ hold.
Repeating this argument until $j=k$,
we complete the proof of Theorem \ref{thm:v}.
\end{proof}

\section{Energy estimates for the damped wave equation}
\subsection{First order energy estimates}
In this section, we discuss the energy estimate
for the general damped wave equation \eqref{eq:gDW}.

The results of this section
will be used in the next section
by putting
$w=U_{n+1}$, $F=-\partial_t V_n$,
$w_0 = 0$, and $w_1 = (-a(x))^{-n-1}u_1(x)$
(see \eqref{eq:U_n+1})
to derive the energy estimate of
$\partial_t^{n+1}U_{n+1}$.

We start with the definition of the weighted energy of $w$.
\begin{definition}
For
$\delta \in(0,1/2)$, $\varepsilon \in (0,1/2)$, $\lambda \in [0, (1-2\delta)\gamma_{\varepsilon})$
(see \eqref{gammatilde} for the definition of $\gamma_{\varepsilon}$),
$\beta = \lambda/(1-2\delta)$, $t_0 \ge 1$, and $\nu >0$,
we define
\begin{align}
\label{eq:en:w:1}
    E_1[w](t;t_0,\lambda) &:=
    \int_{\Omega} \left( |\nabla w(x,t)|^2 + |\partial_t w(x,t)|^2 \right) \Psi(x,t;t_0)^{\lambda+1} \,dx,\\
\label{eq:en:w:0}
    E_0[w](t;t_0,\lambda) &:=
    \int_{\Omega} \left( 2 w(x,t) \partial_tw(x,t) + a(x) |w(x,t)|^2 \right) \Phi_{\beta,\varepsilon}(x,t;t_0)^{-1+2\delta} \,dx,\\
\label{eq:en:w:1+0}
    E[w](t;t_0,\lambda,\nu) &:=
    \nu E_1[w](t;t_0,\lambda) + E_0[w](t;t_0,\lambda)
\end{align}
for $t \ge 0$.
\end{definition}
We note that, for any
$\nu > 0$,
there exists
$t_1 >0$
such that
\begin{align}
    E[w](t;t_0,\lambda,\nu)
    \sim E_1[w](t;t_0,\lambda)
        + \int_{\Omega} a(x)|w(x,t)|^2 \Psi(x,t;t_0)^{\lambda} \,dx
\end{align}
holds for any $t_0 \ge t_1$.
Indeed, the Schwarz inequality implies
\begin{align}
    |2 w \partial_t w|
    \le \frac{a(x)}{2} |w|^2 + 2 a(x)^{-1} |\partial_t w|^2,
\end{align}
and Proposition \ref{prop:super-sol} (iii) and (iv)
lead to 
\begin{align}
    2a(x)^{-1}|\partial_t w|^2 \Phi_{\beta,\varepsilon}^{-1+2 \delta}
    \le C \Psi^{\frac{\alpha}{2-\alpha}}
    |\partial_t w|^2 \Psi^{\lambda}
    \le C t_0^{-1+\frac{\alpha}{2-\alpha}}
    |\partial_t w|^2 \Psi^{\lambda+1}
    \le \frac{\nu}{2} |\partial_t w|^2 \Psi^{\lambda+1}
\end{align}
for sufficiently large $t_0$.

The main theorem of this subsection is the following:
\begin{theorem}\label{thm:en:w:1}
Assume that $a(x)$ satisfies \eqref{a}.
Let
$\delta \in(0,1/2)$,
$\varepsilon \in (0,1/2)$,
$\lambda \in [0, (1-2\delta)\gamma_{\varepsilon})$,
and
$\beta = \lambda/(1-2\delta)$.
Then, there exist constants
$\nu=\nu(N,\alpha,\delta,\varepsilon,\lambda)>0$
and
$t_*=t_*(N,\alpha,\delta,\varepsilon,\lambda,\nu) \ge 1$
such that
for any $t_0 \ge t_*$, the following holds:
Let
$m=(\lambda+1)\frac{2-\alpha}{2}$
and assume
$F \in C^1([0,\infty); H^{0,m}(\Omega))$
satisfies
\begin{align}
    \int_{\Omega} a(x)^{-1} |F(x,t)|^2 \Psi(x,t;t_0)^{\lambda+1} \,dx \in L^1(0,\infty).
\end{align}
Let $w$ be the solution of \eqref{eq:gDW} with the initial data
$(w_0, w_1) \in (H^{2,m}(\Omega)\cap H_0^{1,m}(\Omega) ) \times H^{1,m}_0(\Omega)$
given in Theorem \ref{thm:WP:gDW}.
Then, we have
\begin{align}
    &E[w](t;t_0,\lambda,\nu) \in L^{\infty}(0,\infty),\\
    &\int_{\Omega} |\nabla w(x,t)|^2 \Psi(x,t;t_0)^{\lambda} \,dx \in L^1(0,\infty),\\
    &\int_{\Omega} a(x) |\partial_t w(x,t)|^2 \Psi(x,t;t_0)^{\lambda+1} \,dx \in L^1(0,\infty).
\end{align}
\end{theorem}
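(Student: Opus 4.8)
The strategy is the standard weighted-energy argument: differentiate $E[w](t;t_0,\lambda,\nu)$ in time, and show that all terms on the right-hand side are either negative (the ``good'' dissipation terms producing the $L^1$ conclusions) or integrable in time. The combination $E_0$ is the natural ``hyperbolic'' energy adapted to the supersolution weight $\Phi_{\beta,\varepsilon}^{-1+2\delta}$, which is exactly the weight that makes the damped-wave energy identity close; the additional piece $\nu E_1$, with a small coefficient $\nu$, is there to absorb the cross terms $\int w\,\Delta w$ and to gain control of $\|\nabla w\|$ and $\|\partial_t w\|$ with the heavier weight $\Psi^{\lambda+1}$. First I would compute $\frac{d}{dt}E_0$: expanding, one gets $2\int |\partial_t w|^2\Phi^{-1+2\delta} + 2\int w\,\Delta w\,\Phi^{-1+2\delta} - \int a|w|^2\partial_t\Phi^{-1}\cdots + 2\int wF\Phi^{-1+2\delta}$ plus terms from differentiating the weight. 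The crucial point is to apply Lemma \ref{lem.deltaphi} to $\int w\,\Delta w\,\Phi_{\beta,\varepsilon}^{-1+2\delta}$, which yields $-\frac{\delta}{1-\delta}\int|\nabla w|^2\Phi^{-1+2\delta} + \frac{1-2\delta}{2}\int w^2(\Delta\Phi_{\beta,\varepsilon})\Phi_{\beta,\varepsilon}^{-2+2\delta}$, and then to combine the weight terms using Proposition \ref{prop:super-sol} (iv): the combination $a\,\partial_t\Phi_{\beta,\varepsilon} - \Delta\Phi_{\beta,\varepsilon} \ge c\,a\,\Psi^{-\beta-1}$ produces a genuinely negative term $-c\int a|w|^2\Psi^{-\beta-1}\Phi^{-1}\cdots$, i.e. (after estimating $\Phi_{\beta,\varepsilon}\sim\Psi^{-\beta}$ via Proposition \ref{prop:super-sol} (ii)–(iii)) a term controlling $\int a|w|^2\Psi^{\lambda-1}$.

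Next I would compute $\frac{d}{dt}E_1$, which gives $(\lambda+1)\int(|\nabla w|^2+|\partial_t w|^2)\Psi^{\lambda} + 2\int(\nabla\partial_t w\cdot\nabla w + \partial_t w\,\partial_t^2 w)\Psi^{\lambda+1}$ plus a weight-derivative term $\propto\int(\cdots)\partial_t A_\varepsilon\cdots$ which vanishes since $\partial_t\Psi = 1$ — more precisely the $\Psi$-derivative contributes the $(\lambda+1)\int(\cdots)\Psi^{\lambda}$ term already written. Substituting the equation $\partial_t^2 w = \Delta w - a\,\partial_t w + F$ and integrating by parts the $\nabla\partial_t w\cdot\nabla w$ term exactly as in the estimate \eqref{eq:lem:v1:en1} of Lemma \ref{lem:v1}, the leading term becomes $-2\int a|\partial_t w|^2\Psi^{\lambda+1}$, and the remaining terms are controlled by $\int|\nabla w|^2\Psi^{\lambda}$, $\int a^{-1}|F|^2\Psi^{\lambda+1}$, and a small multiple of $\int a|\partial_t w|^2\Psi^{\lambda+1}$ (using \eqref{A3} and $A_\varepsilon\le\Psi$). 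So $\frac{d}{dt}E_1 \lesssim -\int a|\partial_t w|^2\Psi^{\lambda+1} + C\int|\nabla w|^2\Psi^{\lambda} + C\int a^{-1}|F|^2\Psi^{\lambda+1}$; the term $+C\int|\nabla w|^2\Psi^{\lambda}$ is the dangerous one, and it is precisely why we need the negative $-\frac{\delta}{1-\delta}\int|\nabla w|^2\Phi^{-1+2\delta}$ coming from $E_0$ (note $\Phi^{-1+2\delta}\sim\Psi^{-\beta(-1+2\delta)} = \Psi^{\lambda}$ up to constants).

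Then I would form $\frac{d}{dt}E = \nu\frac{d}{dt}E_1 + \frac{d}{dt}E_0$ and choose $\nu$ small and $t_0\ge t_*$ large so that: (a) the bad term $\nu C\int|\nabla w|^2\Psi^{\lambda}$ is dominated by the good term $-\frac{\delta}{1-\delta}\int|\nabla w|^2\Psi^{\lambda}$ from $E_0$ (this fixes $\nu$ in terms of $N,\alpha,\delta,\varepsilon,\lambda$); (b) the cross term $2\int wF\Psi^{\sigma}$-type contributions and the term $\int a|w|^2\Psi^{\lambda-1}$ arising with the wrong sign are absorbed, using Young's inequality, into the negative $-c\int a|w|^2\Psi^{\lambda-1}$ from Proposition \ref{prop:super-sol} (iv) — here the smallness of the coefficient that the weight analysis provides, i.e. the factor $t_0^{-1+\alpha/(2-\alpha)}$ appearing in the equivalence of $E$ with its model, is what forces $t_0\ge t_*$; (c) what remains on the right after these absorptions is $-c\int|\nabla w|^2\Psi^{\lambda} - c\int a|\partial_t w|^2\Psi^{\lambda+1} + C\int a^{-1}|F|^2\Psi^{\lambda+1}$. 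Since $\int a^{-1}|F|^2\Psi^{\lambda+1}\in L^1(0,\infty)$ by hypothesis and $E(0)$ is finite (by the weighted regularity of the data from Theorem \ref{thm:WP:gDW}, using $m=(\lambda+1)\frac{2-\alpha}{2}$ and the comparison $\Psi(x,0;t_0)\sim\lr{x}^{2-\alpha}$ from \eqref{A2}), integrating over $[0,t]$ yields $E[w](t)\in L^\infty(0,\infty)$ together with the two $L^1$ bounds $\int_0^\infty\!\!\int_\Omega|\nabla w|^2\Psi^{\lambda}\,dx\,dt<\infty$ and $\int_0^\infty\!\!\int_\Omega a|\partial_t w|^2\Psi^{\lambda+1}\,dx\,dt<\infty$.

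\textbf{Main obstacle.} The delicate point is the simultaneous calibration of the two parameters $\nu$ and $t_0$: the cross terms generated by the $E_1$-energy must be absorbed by dissipation that lives in $E_0$ (so $\nu$ must be small relative to $\delta$), while at the same time the ``wrong-sign'' $\int a|w|^2\Psi^{\lambda-1}$ term and the forcing cross term must be controlled by the Proposition \ref{prop:super-sol} (iv) dissipation, which requires $t_0$ large — and enlarging $t_0$ must not destroy the finiteness of $E(0)$ or the $E\sim$ model equivalence. One must check these constraints are mutually compatible (they are: $\nu$ is chosen first depending only on $N,\alpha,\delta,\varepsilon,\lambda$, then $t_*$ depending additionally on $\nu$), and one must also make sure all integrations by parts are legitimate, which is handled exactly as remarked after Lemma \ref{lem:v1} by the approximation argument of \cite[Section 4]{SoWa21_JMSJ}. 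The bookkeeping of lower-order weight-derivative terms (those with $\partial_t A_\varepsilon = 0$ trivially vanish, but the $\nabla A_\varepsilon$ terms from integrating by parts against $\Psi^{\sigma}$ do not) is routine but must be done with the uniform bound \eqref{A3} in hand.
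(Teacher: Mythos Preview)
Your proposal is correct and follows essentially the same approach as the paper: compute $\frac{d}{dt}E_0$ via Lemma~\ref{lem.deltaphi} and Proposition~\ref{prop:super-sol}(iv) to extract the good terms $-\eta\int|\nabla w|^2\Psi^{\lambda}$ and $-\eta\int a|w|^2\Psi^{\lambda-1}$, compute $\frac{d}{dt}E_1$ by integration by parts to extract $-\int a|\partial_t w|^2\Psi^{\lambda+1}$, then choose $\nu$ small and $t_0$ large to close. One small imprecision: in your step~(b), the reason $t_0$ must be large is not to absorb the wrong-sign $\int a|w|^2\Psi^{\lambda-1}$ terms (those are handled purely by choosing the Young parameters small, independently of $t_0$), but rather to absorb the leftover term $C\int|\partial_t w|^2\Psi^{\lambda}$ coming from $E_0$ (and the analogous term $(\lambda+1)\int|\partial_t w|^2\Psi^{\lambda}$ inside the $E_1$ computation) into $-\nu\int a|\partial_t w|^2\Psi^{\lambda+1}$ via $\Psi^{-1}\le C t_0^{-1+\alpha/(2-\alpha)}a(x)$; this is exactly the mechanism you name, just applied to a different term.
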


The proof of Theorem \ref{thm:en:w:1}
is a bit lengthy, but
the outline is as follows.
We shall derive good terms
$\displaystyle - \int_{\Omega} |\nabla w|^2 \Psi^{\lambda}\,dx$
and
$\displaystyle -\int_{\Omega} a(x)| \partial_tw|^2 \Psi^{\lambda+1} \,dx$
from the computations of
$\frac{d}{dt}E_0[w](t;t_0,\lambda)$
and
$\frac{d}{dt} E_1[w](t;t_0,\lambda)$,
respectively
(see the right-hand sides of Lemmas \ref{lem:en:w:E0} and \ref{lem:en:w:E1}).
Then, we sum up them with sufficiently small
$\nu$
and sufficiently large $t_0$
so that the other bad terms
are absorbed by these good terms.

We first give estimates of
$E_0[w](t;t_0,\lambda)$.

\begin{lemma}\label{lem:en:w:E0}
Under the assumptions on Theorem \ref{thm:en:w:1},
for any
$t_0 \ge 1$ and $t>0$,
we have
\begin{align}\label{eq:lem:en:w:E0:1}
    \frac{d}{dt} E_0[w](t;t_0,\lambda) 
    &\le
    - \eta_0 \int_{\Omega} |\nabla w(x,t)|^2 \Psi(x,t;t_0)^{\lambda} \,dx \\
    &\quad
    - \eta_0 \int_{\Omega} a(x) |w(x,t)|^2 \Psi(x,t;t_0)^{\lambda-1} \,dx \\
    &\quad 
        + C \int_{\Omega} |\partial_t w(x,t)|^2 \Psi(x,t;t_0)^{\lambda} \,dx \\
    &\quad
        + C \int_{\Omega} a(x)^{-1} |F(x,t)|^2 \Psi(x,t;t_0)^{\lambda+1} \,dx
\end{align}
with some constants
$\eta_0 = \eta_0(\varepsilon,\delta) > 0$
and 
$C=C(N,\alpha,\delta,\varepsilon,\lambda)>0$.
\end{lemma}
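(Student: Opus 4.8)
The plan is to differentiate $E_0[w]$ in time and organize the outcome around the two good (negative) terms $-\int_{\Omega}|\nabla w|^2\Psi^{\lambda}\,dx$ and $-\int_{\Omega}a(x)|w|^2\Psi^{\lambda-1}\,dx$. Writing $\psi:=\Phi_{\beta,\varepsilon}^{-1+2\delta}$, I would first compute
\begin{align}
\frac{d}{dt}E_0[w](t;t_0,\lambda)
&= \int_{\Omega}\bigl(2|\partial_t w|^2 + 2w\Delta w + 2wF\bigr)\psi\,dx \\
&\quad + \int_{\Omega}\bigl(2w\partial_t w + a(x)|w|^2\bigr)\,\partial_t\psi\,dx,
\end{align}
which follows from $\partial_t\bigl(2w\partial_t w + a(x)|w|^2\bigr) = 2|\partial_t w|^2 + 2w\partial_t^2 w + 2a(x)w\partial_t w$ and the equation $\partial_t^2 w = \Delta w - a(x)\partial_t w + F$, so that the two terms $\pm 2a(x)w\partial_t w$ cancel. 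To $\int_{\Omega}2w\Delta w\,\psi\,dx$ I would apply Lemma~\ref{lem.deltaphi} with $\Phi=\Phi_{\beta,\varepsilon}$; this produces the main negative gradient term $-\frac{2\delta}{1-\delta}\int_{\Omega}|\nabla w|^2\psi\,dx$ together with $(1-2\delta)\int_{\Omega}w^2(\Delta\Phi_{\beta,\varepsilon})\Phi_{\beta,\varepsilon}^{-2+2\delta}\,dx$.

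The heart of the argument is then a cancellation in the zeroth-order terms. By Proposition~\ref{prop:super-sol}~(i) we have $\partial_t\psi = (1-2\delta)\beta\,\Phi_{\beta+1,\varepsilon}\Phi_{\beta,\varepsilon}^{-2+2\delta}\ge 0$, and by Proposition~\ref{prop:super-sol}~(i) and (iv),
\begin{align}
\Delta\Phi_{\beta,\varepsilon}
&= a(x)\partial_t\Phi_{\beta,\varepsilon}-\bigl(a(x)\partial_t\Phi_{\beta,\varepsilon}-\Delta\Phi_{\beta,\varepsilon}\bigr) \\
&\le -\beta\,a(x)\Phi_{\beta+1,\varepsilon} - c\,a(x)\Psi^{-\beta-1}.
\end{align}
Hence the contribution $(1-2\delta)\beta\int_{\Omega}a(x)w^2\Phi_{\beta+1,\varepsilon}\Phi_{\beta,\varepsilon}^{-2+2\delta}\,dx$ coming from the $a(x)|w|^2\partial_t\psi$ part of the second integral exactly cancels the corresponding term arising from $\Delta\Phi_{\beta,\varepsilon}$, and the net contribution of all $a(x)|w|^2$-terms reduces to the negative quantity $-c(1-2\delta)\int_{\Omega}a(x)w^2\Psi^{-\beta-1}\Phi_{\beta,\varepsilon}^{-2+2\delta}\,dx$. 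Using $\beta=\lambda/(1-2\delta)$ and $c\Psi^{-\beta}\le\Phi_{\beta,\varepsilon}\le C\Psi^{-\beta}$ from Proposition~\ref{prop:super-sol}~(ii),(iii), one checks $\Phi_{\beta,\varepsilon}^{-1+2\delta}\sim\Psi^{\lambda}$ and $\Phi_{\beta,\varepsilon}^{-2+2\delta}\gtrsim\Psi^{\lambda+\beta}$, so that $\Psi^{-\beta-1}\Phi_{\beta,\varepsilon}^{-2+2\delta}\gtrsim\Psi^{\lambda-1}$; this converts the two surviving negative terms into $-\eta_0\int_{\Omega}|\nabla w|^2\Psi^{\lambda}\,dx$ and $-\eta_0\int_{\Omega}a(x)|w|^2\Psi^{\lambda-1}\,dx$ with a suitable constant $\eta_0>0$.

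It remains to control the cross term $\int_{\Omega}2w\partial_t w\,\partial_t\psi\,dx$ and the forcing term $\int_{\Omega}2wF\,\psi\,dx$. For both I would use Young's inequality together with $\partial_t\psi\lesssim\Psi^{\lambda-1}$, $\psi^2\lesssim\Psi^{2\lambda}$ (again from Proposition~\ref{prop:super-sol}~(ii),(iii) and $\beta=\lambda/(1-2\delta)$), and the elementary bound $a(x)^{-1}\lesssim\Psi(x,t;t_0)$, which holds because $\alpha<1$ forces $a(x)^{-1}\lesssim\langle x\rangle^{\alpha}$ while $\Psi\gtrsim A_{\varepsilon}(x)\gtrsim\langle x\rangle^{2-\alpha}$ by \eqref{A2}. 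Distributing, the cross term is bounded by a small multiple of $\int_{\Omega}a(x)w^2\Psi^{\lambda-1}\,dx$ plus $C\int_{\Omega}|\partial_t w|^2\Psi^{\lambda}\,dx$, and the forcing term by a small multiple of $\int_{\Omega}a(x)w^2\Psi^{\lambda-1}\,dx$ plus $C\int_{\Omega}a(x)^{-1}|F|^2\Psi^{\lambda+1}\,dx$; the small multiples are absorbed into the good term by shrinking $\eta_0$, and combining with $\int_{\Omega}2|\partial_t w|^2\psi\,dx\le C\int_{\Omega}|\partial_t w|^2\Psi^{\lambda}\,dx$ (using $\psi\lesssim\Psi^{\lambda}$) yields the claimed inequality. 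The main obstacle, and the step requiring care, is precisely the exact cancellation above: only after splitting off the $a(x)\partial_t\Phi_{\beta,\varepsilon}$ part of $\Delta\Phi_{\beta,\varepsilon}$ and matching it against $\partial_t\psi$ does the zeroth-order term acquire the correct sign, and the quantitative gain $\Psi^{-\beta-1}$ in Proposition~\ref{prop:super-sol}~(iv) is what makes the surviving weight exactly $\Psi^{\lambda-1}$. A secondary point is that the integrations by parts must be justified by an approximation argument as in \cite[Section~4]{SoWa21_JMSJ}.
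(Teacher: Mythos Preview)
Your proof is correct and follows essentially the same route as the paper's: the paper differentiates $E_0[w]$, applies Lemma~\ref{lem.deltaphi}, and then directly groups the $a(x)|w|^2\partial_t\Phi_{\beta,\varepsilon}$ and $\Delta\Phi_{\beta,\varepsilon}$ contributions into $-(1-2\delta)\int_{\Omega}|w|^2(a\partial_t\Phi_{\beta,\varepsilon}-\Delta\Phi_{\beta,\varepsilon})\Phi_{\beta,\varepsilon}^{-2+2\delta}\,dx$, which by Proposition~\ref{prop:super-sol}~(iii),(iv) yields the good term $-\eta\int_{\Omega}a|w|^2\Psi^{\lambda-1}\,dx$; your ``exact cancellation'' is just this same grouping written out componentwise. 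The treatment of the cross term and the forcing term, including the use of $a(x)^{-1}\lesssim\Psi$, is also identical to the paper's.
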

\begin{proof}
By the definition of $E_0[w](t;t_0,\lambda)$
and using the equation \eqref{eq:gDW},
we calculate
\begin{align}
    \frac{d}{dt}E_0[w](t;t_0,\lambda)
    &= 2 \int_{\Omega} |\partial_tw|^2 \Phi_{\beta,\varepsilon}^{-1+2\delta} \,dx
        + 2 \int_{\Omega} w \left( \partial_t^2 w + a(x) \partial_t w \right) \Phi_{\beta,\varepsilon}^{-1+2\delta} \,dx \\
    &\quad - (1-2\delta) \int_{\Omega} \left( 2 w \partial_t w + a(x) |w|^2 \right)
               ( \partial_t \Phi_{\beta,\varepsilon} ) \Phi_{\beta,\varepsilon}^{-2+2\delta} \,dx \\
    &= 2 \int_{\Omega} |\partial_tw|^2 \Phi_{\beta,\varepsilon}^{-1+2\delta} \,dx
        + 2 \int_{\Omega} w \left( \Delta w + F \right) \Phi_{\beta,\varepsilon}^{-1+2\delta} \,dx \\
        &\quad - (1-2\delta) \int_{\Omega} \left( 2 w \partial_t w + a(x) |w|^2 \right)
               ( \partial_t \Phi_{\beta,\varepsilon} ) \Phi_{\beta,\varepsilon}^{-2+2\delta} \,dx.
\end{align}
Applying Lemma \ref{lem.deltaphi}, we have
\begin{align}
    \frac{d}{dt}E_0[w](t;t_0,\lambda)
    &\le 2 \int_{\Omega} |\partial_tw|^2 \Phi_{\beta,\varepsilon}^{-1+2\delta} \,dx
        - \frac{2\delta}{1-\delta} \int_{\Omega} |\nabla w|^2 \Phi_{\beta,\varepsilon}^{-1+2\delta} \,dx \\
    &\quad - (1-2\delta) \int_{\Omega} |w|^2
                \left( a(x) \partial_t \Phi_{\beta,\varepsilon} - \Delta \Phi_{\beta,\varepsilon} \right)
                \Phi_{\beta,\varepsilon}^{-2+2\delta} \,dx \\
    &\quad - 2 (1-2\delta) \int_{\Omega} w \partial_t w  ( \partial_t \Phi_{\beta,\varepsilon} ) \Phi_{\beta,\varepsilon}^{-2+2\delta} \,dx \\
    &\quad + 2 \int_{\Omega} w F \Phi_{\beta,\varepsilon}^{-1+2\delta} \,dx.
\label{eq:lem:en:w:E0:2}
\end{align}
By Proposition \ref{prop:super-sol} (iii) and (iv),
the third term of the right-hand side of \eqref{eq:lem:en:w:E0:2}
is estimated as
\begin{align}\label{eq:lem:en:w:E0:wL2}
    - (1-2\delta) \int_{\Omega} |w|^2
                \left( a(x) \partial_t \Phi_{\beta,\varepsilon} - \Delta \Phi_{\beta,\varepsilon} \right)
                \Phi_{\beta,\varepsilon}^{-2+2\delta} \,dx 
    \le 
    - \eta \int_{\Omega} a(x) |w|^2 \Psi^{\lambda -1} \,dx
\end{align}
with some $\eta > 0$.
Moreover, by Proposition \ref{prop:super-sol} (iii),
the second term of the right-hand side of \eqref{eq:lem:en:w:E0:2} is estimated as
\begin{align}
\label{eq:lem:en:w:E0:5}
    - \frac{2\delta}{1-\delta} \int_{\Omega} |\nabla w|^2 \Phi_{\beta,\varepsilon}^{-1+2\delta} \,dx
    \le - \eta' \int_{\Omega} |\nabla w|^2 \Psi^{\lambda} \,dx
\end{align}
with some $\eta' > 0$.
Moreover, we can drop the third term of the right-hand side,
and we also have
$|\partial_t \Phi_{\beta,\varepsilon}| = |\beta \Phi_{\beta+1,\varepsilon} | \le C \Psi^{-\beta -1}$,
which implies
\begin{align}
    \left| w \partial_t w  ( \partial_t \Phi_{\beta,\varepsilon} ) \Phi_{\beta,\varepsilon}^{-2+2\delta} \right|
    \le C |w| |\partial_t w| \Psi^{\lambda-1}.
\end{align}
Therefore, from the above inequality
with
the Schwarz inequality, we estimate
the fourth term of the right-hand side of \eqref{eq:lem:en:w:E0:2} as
\begin{align}
   &\left| \int_{\Omega} w \partial_t w  ( \partial_t \Phi_{\beta,\varepsilon} ) \Phi_{\beta,\varepsilon}^{-2+2\delta} \,dx \right| \\
   &\le C \int_{\Omega}
        |w| |\partial_t w| \Psi^{\lambda-1} \,dx \\
   &\le C \left( \int_{\Omega} a(x) |w|^2 \Psi^{\lambda-1} \,dx \right)^{1/2}
        \left( \int_{\Omega} a(x)^{-1} |\partial_t w|^2 \Psi^{\lambda-1} \,dx \right)^{1/2} \\
    &\le
    \eta_1
    \int_{\Omega} a(x) |w|^2 \Psi^{\lambda-1} \,dx
   + C(\eta_1) \int_{\Omega} |\partial_t w|^2 \Psi^{\lambda} \,dx
\label{eq:lem:en:w:E0:3}
\end{align}
for any $\eta_1>0$.
Similarly, the last term of the right-hand side of \eqref{eq:lem:en:w:E0:2} is estimated as
\begin{align}
    \left| 2\int_{\Omega} w F \Phi_{\beta,\varepsilon}^{-1+2\delta} \,dx \right|
    &\le C \int_{\Omega} |w| |F| \Psi^{\lambda} \,dx \\
    &\le \left( \int_{\Omega} a(x) |w|^2 \Psi^{\lambda-1} \,dx \right)^{1/2}
        \left( \int_{\Omega} a(x)^{-1} |F|^2 \Psi^{\lambda+1} \,dx \right)^{1/2} \\
    &\le \eta_2 \int_{\Omega} a(x) |w|^2 \Psi^{\lambda-1} \,dx
        + C \int_{\Omega} a(x)^{-1} |F|^2 \Psi^{\lambda+1} \,dx
\label{eq:lem:en:w:E0:4}
\end{align}
for any $\eta_2>0$.
Therefore, by applying
\eqref{eq:lem:en:w:E0:5}--\eqref{eq:lem:en:w:E0:4} to
\eqref{eq:lem:en:w:E0:2},
we have the desired estimate.
\end{proof}

\begin{lemma}\label{lem:en:w:E1}
Under the assumptions on Theorem \ref{thm:en:w:1},
there exists $t_2 \ge 1$ such that
for any $t_0 \ge t_2$ and $t>0$,
we have
\begin{align}
    \frac{d}{dt} E_1[w](t;t_0,\lambda)
    &\le - \int_{\Omega} a(x) |\partial_t w(x,t)|^2 \Psi(x,t;t_0)^{\lambda+1} \,dx
        + C \int_{\Omega} |\nabla w(x,t)|^2 \Psi(x,t;t_0)^{\lambda} \,dx \\
    &\quad + C \int_{\Omega} a(x)^{-1} |F(x,t)|^2 \Psi(x,t;t_0)^{\lambda+1} \,dx
\end{align}
with some constant
$C = C(N,\alpha,\delta,\varepsilon,\lambda,t_2) >0$.
\end{lemma}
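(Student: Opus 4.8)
The plan is to differentiate $E_1[w]$ directly and exploit that $\partial_t\Psi(x,t;t_0)=1$, so that $\partial_t(\Psi^{\lambda+1})=(\lambda+1)\Psi^{\lambda}$. Substituting the equation $\partial_t^2 w=\Delta w-a(x)\partial_t w+F$ and then integrating by parts in the resulting $\int_{\Omega}2\partial_t w\,\Delta w\,\Psi^{\lambda+1}\,dx$, I expect to arrive at the identity
\begin{align}
    \frac{d}{dt}E_1[w]
    &= -2\int_{\Omega}a(x)|\partial_t w|^2\Psi^{\lambda+1}\,dx
     + (\lambda+1)\int_{\Omega}\left(|\nabla w|^2+|\partial_t w|^2\right)\Psi^{\lambda}\,dx \\
    &\quad -2(\lambda+1)\int_{\Omega}\partial_t w\,(\nabla w\cdot\nabla A_{\varepsilon})\,\Psi^{\lambda}\,dx
     + 2\int_{\Omega}\partial_t w\,F\,\Psi^{\lambda+1}\,dx,
\end{align}
where the term $\int\nabla\partial_t w\cdot\nabla w\,\Psi^{\lambda+1}$ cancels and the cross term arises from $\nabla(\Psi^{\lambda+1})=(\lambda+1)\Psi^{\lambda}\nabla A_{\varepsilon}$; the boundary term vanishes because $w(\cdot,t)=0$ on $\partial\Omega$ forces $\partial_t w(\cdot,t)=0$ there. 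This is essentially the computation already carried out in \eqref{eq:nb_v_psi}--\eqref{eq:lem:v1:en1}, and the integration by parts is justified by the approximation argument mentioned in the remark following Lemma \ref{lem:v1}.

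Next I would dispose of the last two integrals by Young's inequality. For the cross term, $2(\lambda+1)|\partial_t w||\nabla w||\nabla A_{\varepsilon}|\Psi^{\lambda}\le \tfrac14 a(x)|\partial_t w|^2\Psi^{\lambda+1}+C\frac{|\nabla A_{\varepsilon}|^2}{a(x)\Psi}|\nabla w|^2\Psi^{\lambda}$, and since \eqref{A3} gives $|\nabla A_{\varepsilon}|^2\le C a(x)A_{\varepsilon}$ while $A_{\varepsilon}\le\Psi$ by \eqref{psi}, the last integrand is $\le C|\nabla w|^2\Psi^{\lambda}$. For the $F$-term, $2|\partial_t w||F|\Psi^{\lambda+1}\le\tfrac14 a(x)|\partial_t w|^2\Psi^{\lambda+1}+C a(x)^{-1}|F|^2\Psi^{\lambda+1}$. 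Both leftover $|\nabla w|^2\Psi^{\lambda}$ contributions then merge with the one already present in the identity into the admissible term $C\int_{\Omega}|\nabla w|^2\Psi^{\lambda}\,dx$.

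The one step that genuinely requires $t_0$ large is absorbing the remaining $(\lambda+1)\int_{\Omega}|\partial_t w|^2\Psi^{\lambda}\,dx$. Writing $|\partial_t w|^2\Psi^{\lambda}=\frac{1}{a(x)\Psi}\,a(x)|\partial_t w|^2\Psi^{\lambda+1}$ and using $a(x)^{-1}\le C\langle x\rangle^{\alpha}\le C\Psi^{\alpha/(2-\alpha)}$ (valid by \eqref{A2}), we obtain
\begin{align}
    \frac{1}{a(x)\Psi}\le C\Psi^{-1+\frac{\alpha}{2-\alpha}}=C\Psi^{-\frac{2(1-\alpha)}{2-\alpha}}\le C\,t_0^{-\frac{2(1-\alpha)}{2-\alpha}},
\end{align}
the exponent being negative precisely because $\alpha<1$. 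Choosing $t_2\ge1$ so that $C(\lambda+1)t_2^{-2(1-\alpha)/(2-\alpha)}\le\tfrac14$ makes this term $\le\tfrac14\int_{\Omega}a(x)|\partial_t w|^2\Psi^{\lambda+1}\,dx$ for every $t_0\ge t_2$. Adding the three $\tfrac14$-contributions to the $-2$ coefficient leaves $-\tfrac54\int_{\Omega}a(x)|\partial_t w|^2\Psi^{\lambda+1}\,dx\le-\int_{\Omega}a(x)|\partial_t w|^2\Psi^{\lambda+1}\,dx$, which is the claimed bound. I expect this absorption to be the only real obstacle: it is where $t_0$ must be taken large and where the effective-damping structure ($\alpha<1$, together with $a(x)^{-1}\lesssim A_{\varepsilon}^{\alpha/(2-\alpha)}$) enters; everything else is the routine differentiate--integrate-by-parts--Young scheme already used for the heat equation in Section 4.
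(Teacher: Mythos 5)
Your proposal is correct and follows essentially the same route as the paper: the identity you derive (after cancelling $\nabla\partial_t w\cdot\nabla w$ via integration by parts and substituting the equation) is exactly the paper's \eqref{eq:lem:en:w:E1:2}, and your Young-inequality treatment of the cross term via \eqref{A3} and $A_{\varepsilon}\le\Psi$, of the $F$-term, and the absorption of $(\lambda+1)\int|\partial_t w|^2\Psi^{\lambda}\,dx$ using $\tfrac{1}{a(x)\Psi}\lesssim t_0^{-2(1-\alpha)/(2-\alpha)}$ for $t_0$ large coincide with the paper's \eqref{eq:lem:en:w:E1:3}--\eqref{eq:lem:en:w:E1:5}.
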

\begin{proof}
By the definition of $E_1[w](t;t_0,\lambda)$
and the equation \eqref{eq:gDW},
we calculate
\begin{align}
    \frac{d}{dt} E_1[w](t;t_0,\lambda)
    &= 2 \int_{\Omega} \left( \nabla \partial_t w \cdot \nabla w + \partial_t w \partial_t^2 w \right) \Psi^{\lambda+1} \,dx \\
    &\quad
        + (\lambda+1) \int_{\Omega} \left( |\nabla w|^2 + |\partial_t w|^2 \right) \Psi^{\lambda} \,dx \\
    &= 2 \int_{\Omega} \partial_t w \left( -\Delta w + \partial_t^2 w \right) \Psi^{\lambda+1} \,dx \\
    &\quad
        -2 (\lambda+1) \int_{\Omega} \partial_t w ( \nabla w \cdot \nabla \Psi ) \Psi^{\lambda} \,dx \\
    &\quad
        + (\lambda+1) \int_{\Omega} \left( |\nabla w|^2 + |\partial_t w|^2 \right) \Psi^{\lambda} \,dx \\
    &= -2 \int_{\Omega} a(x) |\partial_t w|^2 \Psi^{\lambda+1} \,dx + 2 \int_{\Omega} \partial_t w F \Psi^{\lambda+1} \,dx \\
    &\quad 
       -2 (\lambda+1) \int_{\Omega} \partial_t w ( \nabla w \cdot \nabla \Psi ) \Psi^{\lambda} \,dx \\
    &\quad
        + (\lambda+1) \int_{\Omega} \left( |\nabla w|^2 + |\partial_t w|^2 \right) \Psi^{\lambda} \,dx.
\label{eq:lem:en:w:E1:2}
\end{align}
Here, we note that the integration by parts
in the second identity
is justified, since
$\partial_t w \nabla w \Psi^{\lambda+1} \in L^1(\Omega)$
for each $t \ge 0$.
For the second term of the right-hand side of \eqref{eq:lem:en:w:E1:2}, we apply the Schwarz inequality to obtain
\begin{align}
\label{eq:lem:en:w:E1:3}
    | 2 \partial_t w F | \le \frac{a(x)}{4} |\partial_t w|^2 + 4 a(x)^{-1} |F|^2.
\end{align}
Next, by the Schwarz inequality,
the third term of the right-hand side of \eqref{eq:lem:en:w:E1:2} is estimated as
\begin{align}
    | -2 (\lambda+1) \partial_t w (\nabla w \cdot \nabla \Psi) |
    &\le \frac{a(x)}{4} |\partial_t w|^2 \Psi + C |\nabla w|^2 \frac{|\nabla \Psi|^2}{a(x)\Psi} \\
    &\le \frac{a(x)}{4} |\partial_t w|^2 \Psi + C |\nabla w|^2,
\label{eq:lem:en:w:E1:4}
\end{align}
where we have also used
\begin{align}
    \frac{|\nabla \Psi|^2}{a(x) \Psi} \le \frac{|\nabla A_{\varepsilon} (x)|^2}{a(x) A_{\varepsilon} (x)} \le \frac{2-\alpha}{N-\alpha} + \varepsilon,
\end{align}
which follows from \eqref{A3}.
Moreover, for the last term of the right-hand side of \eqref{eq:lem:en:w:E1:2},
we note that
$\Psi^{-1} \le t_0^{-1+\frac{\alpha}{2-\alpha}} A_{\varepsilon}(x)^{-\frac{\alpha}{2-\alpha}}
\le C t_0^{-1+\frac{\alpha}{2-\alpha}} a(x) \le \frac{a(x)}{2(\lambda+1)}$
holds for $t_0 \ge t_2$, provided that $t_2$ is sufficiently large.
Thus, we have
\begin{align}
\label{eq:lem:en:w:E1:5}
    \left| (\lambda+1) \int_{\Omega} |\partial_t w|^2 \Psi^{\lambda} \,dx \right|
    &\le \frac{1}{2} \int_{\Omega} a(x) |\partial_t w|^2 \Psi^{\lambda+1} \,dx.
\end{align}
Finally, applying \eqref{eq:lem:en:w:E1:3}--\eqref{eq:lem:en:w:E1:5} to \eqref{eq:lem:en:w:E1:2},
we have the desired estimate.
\end{proof}

Now we are in the position to prove Theorem \ref{thm:en:w:1}.
\begin{proof}[Proof of Theorem \ref{thm:en:w:1}]
Let
$t_2$
be the constant given in Lemma \ref{lem:en:w:E1}.
For $t_0 \ge t_2$,
by Lemmas \ref{lem:en:w:E0} and \ref{lem:en:w:E1},
we calculate
\begin{align}
    \frac{d}{dt} E[w](t;t_0,\lambda,\nu)
    &\le \int_{\Omega} \left( - \nu a(x)  + C \Psi^{-1} \right) |\partial_t w|^2 \Psi^{\lambda+1} \,dx \\
    &\quad
        + \left( \nu C - \eta_0 \right) \int_{\Omega} |\nabla w|^2 \Psi^{\lambda} \,dx \\
    &\quad 
        + C \int_{\Omega} a(x)^{-1} |F|^2 \Psi^{\lambda+1} \,dx.
\label{eq:thm:en:w:1:2}
\end{align}
By taking $\nu > 0$ sufficiently small so that
$\nu C - \eta_0 < 0$.
After that, taking
$t_* \ge t_2$ sufficiently large so that
\begin{align}
    - \nu a(x)  + C \Psi^{-1}
    &\le - \nu a(x) + C t_0^{-1+\frac{\alpha}{2-\alpha}} a(x)
    \le - \frac{\nu}{2} a(x)
\end{align}
holds for $t_0 \ge t_*$.
Therefore, integrating \eqref{eq:thm:en:w:1:2} over $[0,t]$, we have
\begin{align}
   &E[w](t;t_0,\lambda,\nu)
    + \eta_* \int_0^t \int_{\Omega} a(x) |\partial_t w|^2 \Psi^{\lambda+1} \,dx d\tau \\
    &\quad
    + \eta_* \int_0^t \int_{\Omega} |\nabla w|^2 \Psi^{\lambda} \,dx d\tau \\
    &\le E[w](0;t_0,\lambda,\nu)
        + C \int_{0}^{t} \int_{\Omega} a(x)^{-1} |F|^2 \Psi^{\lambda+1} \,dx d\tau
\end{align}
with some constant
$\eta_* > 0$.
By the assumptions of the theorem, the right-hand side is bounded with respect to $t>0$.
Thus, we complete the proof.
\end{proof}

\subsection{Higher order energy estimates}
\begin{definition}
Let $\delta\in (0,1/2)$,
$\varepsilon\in(0,1)$,
$\lambda \in [0,(1-2\delta)\gamma_{\varepsilon})$,
where $\gamma_{\varepsilon}$
is defined in \eqref{gammatilde},
and let $\nu >0$.
For an integer $j \ge 1$, we define
\begin{align}
     E_1^{(j)} [w](t;t_0,\lambda)
    &:= \int_{\Omega} \left( |\nabla w(x,t)|^2 + |\partial_t w(x,t)|^2 \right) \Psi(x,t;t_0)^{\lambda+1+2j} \,dx,\\
    E_0^{(j)} [w](t;t_0,\lambda)
    &:= \int_{\Omega} \left( 2w(x,t) \partial_t w(x,t) + a(x) |w(x,t)|^2 \right) \Psi(x,t;t_0)^{\lambda+2j} \,dx,\\
    E^{(j)}[w](t;t_0,\lambda,\nu)
    &:= \nu E_1^{(j)}[w](t;t_0,\lambda) + E_0^{(j)}[w](t;t_0,\lambda)
\end{align}
for $t \ge 0$.
\end{definition}
We remark that there exists $t_1 > 0$
such that for any $t_0 \ge t_1$ and $t \ge 0$,
we have
\begin{align}
    E^{(j)} [w] (t;t_0,\lambda,\nu)
    \sim
    \int_{\Omega}
    \left[ \left( |\nabla w|^2 + |\partial_t w|^2 \right)\Psi^{\lambda+1+2j}
    + a(x)|w|^2 \Psi^{\lambda+2j} \right] \,dx.
\end{align}

The main result of this subsection
is the following energy estimates
for higher order derivatives of
the solution of the damped wave equation \eqref{eq:gDW}.

\begin{theorem}\label{thm:en:w:k}
Let $k \ge 1$ be an integer.
Assume that $a(x)$ satisfies \eqref{a}.
Let $\delta \in (0,1/2)$,
$\varepsilon\in (0,1/2)$,
and
$\lambda \in [0,(1-2\delta)\gamma_{\varepsilon})$.
Then, there exist constants
$\nu^{(j)} = \nu^{(j)}(N,\alpha,\lambda,j) >0$
and
$t_*^{(j)} = t_*^{(j)}(N,\alpha,\delta,\varepsilon,\lambda,\nu^{(j)}, j) \ge 1$
for $j=1,\ldots,k$
such that for any
$t_0 \ge \max_{1\le j \le k} t_*^{(j)}$,
the following holds:
let
$m = (\lambda+1+2k)\frac{2-\alpha}{2}$
and assume
$F \in \bigcap_{j=0}^{k} C^{j+1}([0,\infty); H^{k-j,m}(\Omega))$
satisfies
\begin{align}
    \int_{\Omega} a(x)^{-1} |\partial_t^j F(x,t)|^2 \Psi(x,t;t_0)^{\lambda+1+2j} \,dx \in L^1(0,\infty)
\end{align}
for $j =0,1, \ldots, k$.
Let $w$ be the solution of \eqref{eq:gDW}
in Theorem \ref{thm:reg:gDW}
with the initial data
$(w_0, w_1) \in H^{k+2,m}(\Omega) \times H^{k+1,m}(\Omega)$
satisfying
the $k$-th order compatibility condition
in the sense of Theorem \ref{thm:reg:gDW}.
Then, we have
\begin{align}
    &E^{(j)} [\partial_t^j w](t;t_0,\lambda,\nu^{(j)}) \in L^{\infty}(0,\infty),\\
    &\int_{\Omega} a(x) |\partial_t^{j+1} w(x,t)|^2 \Psi(x,t;t_0)^{\lambda+1+2j} \,dx \in L^1(0,\infty)
\end{align}
for $j=1,\ldots,k$.
\end{theorem}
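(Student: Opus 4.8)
The plan is to argue by induction on $j$, differentiating \eqref{eq:gDW} in time. Since $w(\cdot,t)$ vanishes on $\pa\Omega$, so does each $\pa_t^jw(\cdot,t)$, and by Theorem~\ref{thm:reg:gDW} with $m=(\lambda+1+2k)\tfrac{2-\alpha}{2}$ the function $\pa_t^jw$ (for $0\le j\le k$) lies in $C([0,\infty);H^{2,m}(\Omega))\cap C^1([0,\infty);H^{1,m}_0(\Omega))\cap C^2([0,\infty);H^{0,m}(\Omega))$, which is enough regularity to run the weighted energy manipulations below; moreover $\pa_t^jw$ solves the damped wave equation with inhomogeneous term $\pa_t^jF$ and initial data $(w_j,w_{j+1})$ furnished by Theorem~\ref{thm:reg:gDW}. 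As in the proof of Theorem~\ref{thm:en:w:1}, the weighted integrations by parts will be justified rigorously by the approximation argument of \cite[Section~4]{SoWa21_JMSJ}, since finite propagation speed is unavailable here.

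Fix $j\in\{1,\dots,k\}$ and write $W=\pa_t^jw$, $\mathcal F=\pa_t^jF$. The inductive step will reproduce, for the polynomially weighted energies $E_0^{(j)}[W]$ and $E_1^{(j)}[W]$, the computations of Lemmas~\ref{lem:en:w:E0} and~\ref{lem:en:w:E1} with $\Phi_{\beta,\ep}^{-1+2\delta}$ replaced by $\Psi^{\lambda+2j}$ and with $\lambda$ replaced by $\lambda+2j$. Differentiating $E_0^{(j)}[W]$, using \eqref{eq:gDW} for $W$, the identity $2W\Delta W=\Delta(W^2)-2|\nabla W|^2$, the bound $|\Delta(\Psi^{\lambda+2j})|\le Ca(x)\Psi^{\lambda+2j-1}$ (a consequence of \eqref{A1}, \eqref{A3} and $A_\ep\le\Psi$), and $\pa_t\Psi\equiv1$, I expect
\begin{align*}
    \frac{d}{dt}E_0^{(j)}[W](t;t_0,\lambda)
    &\le -\eta_0\int_{\Omega}|\nabla W|^2\Psi^{\lambda+2j}\,dx
        +C\int_{\Omega}|\pa_tW|^2\Psi^{\lambda+2j}\,dx\\
    &\quad+C\int_{\Omega}a(x)|W|^2\Psi^{\lambda-1+2j}\,dx
        +C\int_{\Omega}a(x)^{-1}|\mathcal F|^2\Psi^{\lambda+1+2j}\,dx,
\end{align*}
and, for $t_0$ large (exactly as in Lemma~\ref{lem:en:w:E1}),
\begin{align*}
    \frac{d}{dt}E_1^{(j)}[W](t;t_0,\lambda)
    &\le -\int_{\Omega}a(x)|\pa_tW|^2\Psi^{\lambda+1+2j}\,dx
        +C\int_{\Omega}|\nabla W|^2\Psi^{\lambda+2j}\,dx\\
    &\quad+C\int_{\Omega}a(x)^{-1}|\mathcal F|^2\Psi^{\lambda+1+2j}\,dx.
\end{align*}
The point to emphasize is that the coercive term $-\int_\Omega a|W|^2\Psi^{\lambda-1+2j}$ that was produced by the supersolution property Proposition~\ref{prop:super-sol}~(iv) in the first-order case (see \eqref{eq:lem:en:w:E0:wL2}) is not available with the purely polynomial weight; instead, $\int_\Omega a|\pa_t^jw|^2\Psi^{\lambda-1+2j}\,dx=\int_\Omega a|\pa_t^jw|^2\Psi^{\lambda+1+2(j-1)}\,dx$ will be handled as an $L^1(0,\infty)$ function of $t$ coming from the previous induction step.

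To close the step, I would set $E^{(j)}=\nu^{(j)}E_1^{(j)}+E_0^{(j)}$, add the two displays, pick $\nu^{(j)}=\nu^{(j)}(N,\alpha,\lambda,j)>0$ so small that the net coefficient of $\int|\nabla W|^2\Psi^{\lambda+2j}$ is negative, and then choose $t_*^{(j)}$ so large that $-\nu^{(j)}a(x)+C\Psi^{-1}\le-\tfrac{\nu^{(j)}}{2}a(x)$ for $t_0\ge t_*^{(j)}$ (using $\Psi\ge t_0$, $\Psi\ge c_\ep\langle x\rangle^{2-\alpha}$ and $\langle x\rangle^{-\alpha}\lesssim a(x)$, so that $a(x)^{-1}\Psi^{-1}\to0$ as $t_0\to\infty$), which makes the coefficient of $\int|\pa_tW|^2\Psi^{\lambda+1+2j}$ negative as well. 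Integrating over $[0,t]$ and invoking (i) the finiteness of $E^{(j)}[W](0;t_0,\lambda,\nu^{(j)})$, (ii) the hypothesis $\int_\Omega a^{-1}|\pa_t^jF|^2\Psi^{\lambda+1+2j}\,dx\in L^1(0,\infty)$, and (iii) the induction hypothesis $\int_\Omega a|\pa_t^jw|^2\Psi^{\lambda-1+2j}\,dx\in L^1(0,\infty)$ then yields $E^{(j)}[\pa_t^jw]\in L^\infty(0,\infty)$ and $\int_\Omega a|\pa_t^{j+1}w|^2\Psi^{\lambda+1+2j}\,dx\in L^1(0,\infty)$. For the base case $j=1$ the role of hypothesis (iii) is played instead by Theorem~\ref{thm:en:w:1}, whose hypotheses follow from those of the present theorem (note $m\ge(\lambda+1)\tfrac{2-\alpha}{2}$ and that the $j=0$ instances of the source assumption hold); one finally takes $t_0\ge\max_{1\le j\le k}t_*^{(j)}$. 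The main obstacle I anticipate is not the energy algebra, which mirrors Subsection~5.1, but the bookkeeping needed to guarantee $E^{(j)}[\pa_t^jw](0;t_0,\lambda,\nu^{(j)})<\infty$: one must check that the data $(w_j,w_{j+1})$ produced by the $k$-th order compatibility condition really belong to $H^{0,(\lambda+2j)(2-\alpha)/2}(\Omega)$ and $H^{0,(\lambda+1+2j)(2-\alpha)/2}(\Omega)$ respectively for all $j\le k$, together with the corresponding gradient bound — which is precisely what dictates the weight exponent $m=(\lambda+1+2k)\tfrac{2-\alpha}{2}$ in the statement.
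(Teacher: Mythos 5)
Your proposal is correct and follows essentially the same route as the paper: the paper also proves the theorem by induction on $j$, using Theorem \ref{thm:en:w:1} as the base case and, for each step, differential inequalities for the polynomially weighted energies $E_0^{(j)}$ and $E_1^{(j)}$ (Lemmas \ref{lem:en:w:E0j} and \ref{lem:en:w:E1j}) in which the term $C\int_\Omega a|w|^2\Psi^{\lambda-1+2j}\,dx$ is not absorbed by a supersolution-type coercive term but is instead controlled through its $L^1(0,\infty)$ integrability inherited from the previous step, exactly as you describe (Lemma \ref{lem:en:w:Ej}). The choices of small $\nu^{(j)}$ and then large $t_*^{(j)}$, and the justification of the weighted integrations by parts and of the finiteness of the initial energies via Theorem \ref{thm:reg:gDW} with $m=(\lambda+1+2k)\frac{2-\alpha}{2}$, match the paper's argument.
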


The proof of Theorem \ref{thm:en:w:k} is 
based on an induction argument,
which is similar to that of Lemma \ref{lem:v2}.
The main part of the induction argument
is the following lemma.
\begin{lemma}\label{lem:en:w:Ej}
Let
$j \in \mathbb{N}$.
Assume $a(x)$ satisfies \eqref{a}.
Let
$\lambda \ge 0$
and
$m = (\lambda+1+2j)(2-\alpha)/2$.
Then, there exist constants
$\nu^{(j)} = \nu^{(j)}(N,\alpha,\lambda,j) >0$
and
$t_*^{(j)} = t_*^{(j)}(N,\alpha,\lambda,\nu^{(j)}, j) \ge 1$
such that for any
$t_0 \ge t_*^{(j)}$,
the following holds:
Assume
$F \in C^1([0,\infty); H^{0,m}(\Omega))$
and let
$w$
be the solution of \eqref{eq:gDW}
with initial data
$(w_0, w_1)\in (H^{2,m}(\Omega)\cap H^{1,m}_0(\Omega)) \times H^{1,m}_0(\Omega)$.
If
\begin{align}
    &\int_{\Omega} a(x)^{-1} |F (x,t)|^2 \Psi(x,t;t_0)^{\lambda+1+2j} \,dx \in L^1(0,\infty), \\
    & \int_{\Omega} a(x) |w(x,t)|^2 \Psi(x,t;t_0)^{\lambda-1+2j} \,dx \in L^1(0,\infty)
\end{align}
are satisfied, then
\begin{align}
    &E^{(j)}[w](t;t_0,\lambda)\in L^{\infty}(0,\infty),\\
    &\int_{\Omega} a(x) |\partial_t w(x,t)|^2 \Psi(x,t;t_0)^{\lambda+1+2j} \,dx \in L^1(0,\infty)
\end{align}
hold.
\end{lemma}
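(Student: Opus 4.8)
The plan is to run the same weighted-energy scheme as in the proof of Theorem~\ref{thm:en:w:1}, the only structural change being that the zeroth-order energy $E_0^{(j)}$ is now built from the explicit polynomial weight $\Psi^{\lambda+2j}$ instead of the supersolution weight $\Phi_{\beta,\varepsilon}^{-1+2\delta}$. Consequently the differentiation of $E_0^{(j)}$ no longer produces a good negative term $\displaystyle-\int_\Omega a|w|^2\Psi^{\lambda-1+2j}\,dx$, and its role is played instead by the hypothesis $\displaystyle\int_\Omega a|w|^2\Psi^{\lambda-1+2j}\,dx\in L^1(0,\infty)$, exactly as in the induction step of Lemma~\ref{lem:v2}. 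Throughout I will use the two consequences of Lemma~\ref{lem_A_ep}: $\Delta A_\varepsilon\sim a$ with $|\nabla A_\varepsilon|^2\lesssim aA_\varepsilon\lesssim a\Psi$, and the decisive inequality $\Psi^{-1}\le t_0^{-1+\frac{\alpha}{2-\alpha}}A_\varepsilon(x)^{-\frac{\alpha}{2-\alpha}}\le C\,t_0^{-1+\frac{\alpha}{2-\alpha}}a(x)$, where $\alpha<1$ enters and which makes every lower-order $\partial_t w$ term absorbable once $t_0$ is large. All integrations by parts below are legitimate because Theorem~\ref{thm:WP:gDW} gives $w\in C([0,\infty);H^{2,m}(\Omega))\cap C^1([0,\infty);H^{1,m}_0(\Omega))$ with $m=(\lambda+1+2j)\frac{2-\alpha}{2}$, so the weighted products that appear lie in $L^1(\Omega)$ for each fixed $t$ (alternatively one invokes the approximation argument indicated after Lemma~\ref{lem:v1}).

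First I would estimate $\frac{d}{dt}E_0^{(j)}[w]$. Differentiating, using $\partial_t^2 w+a\partial_t w=\Delta w+F$ and $\partial_t\Psi=1$, then writing $2w\Delta w=-2|\nabla w|^2+\Delta(|w|^2)$ and integrating by parts, I obtain
\begin{align*}
\frac{d}{dt}E_0^{(j)}[w]
&= -2\int_\Omega|\nabla w|^2\Psi^{\lambda+2j}\,dx
+2\int_\Omega|\partial_t w|^2\Psi^{\lambda+2j}\,dx
+\int_\Omega|w|^2\Delta(\Psi^{\lambda+2j})\,dx \\
&\quad
+2\int_\Omega wF\,\Psi^{\lambda+2j}\,dx
+(\lambda+2j)\int_\Omega\bigl(2w\partial_t w+a|w|^2\bigr)\Psi^{\lambda+2j-1}\,dx.
\end{align*}
By $\Delta A_\varepsilon\sim a$ and $|\nabla A_\varepsilon|^2\lesssim a\Psi$ one has $|\Delta(\Psi^{\lambda+2j})|\le C a\Psi^{\lambda+2j-1}$; the cross term $w\partial_t w$ and the forcing term $wF$ are handled by the Schwarz inequality, producing $\int a|w|^2\Psi^{\lambda+2j-1}$, $\int a^{-1}|\partial_t w|^2\Psi^{\lambda+2j-1}$ and $\int a^{-1}|F|^2\Psi^{\lambda+2j+1}$. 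Both $\partial_t w$-contributions reduce, via $\Psi^{-1}\le C t_0^{-1+\frac{\alpha}{2-\alpha}}a$, to $\le C\,t_0^{-c}\int_\Omega a|\partial_t w|^2\Psi^{\lambda+2j+1}\,dx$ for some $c=c(\alpha)>0$, so that
\begin{align*}
\frac{d}{dt}E_0^{(j)}[w]
&\le -2\int_\Omega|\nabla w|^2\Psi^{\lambda+2j}\,dx
+C\,t_0^{-c}\int_\Omega a|\partial_t w|^2\Psi^{\lambda+2j+1}\,dx \\
&\quad
+C\int_\Omega a|w|^2\Psi^{\lambda+2j-1}\,dx
+C\int_\Omega a^{-1}|F|^2\Psi^{\lambda+2j+1}\,dx,
\end{align*}
where the last two integrals lie in $L^1(0,\infty)$ by hypothesis.

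Next I would estimate $\frac{d}{dt}E_1^{(j)}[w]$, in complete analogy with Lemma~\ref{lem:en:w:E1}: differentiation, integration by parts and the equation produce the good term $-2\int_\Omega a|\partial_t w|^2\Psi^{\lambda+2j+1}\,dx$; the cross term $-2(\lambda+1+2j)\int_\Omega\partial_t w(\nabla w\cdot\nabla A_\varepsilon)\Psi^{\lambda+2j}\,dx$ is absorbed by Schwarz together with \eqref{A3} (to bound $|\nabla A_\varepsilon|^2/(a\Psi)$), the forcing term by Schwarz, and the stray term $(\lambda+1+2j)\int_\Omega|\partial_t w|^2\Psi^{\lambda+2j}\,dx$ by $\Psi^{-1}\le C t_0^{-1+\frac{\alpha}{2-\alpha}}a$, yielding for $t_0$ large
\begin{align*}
\frac{d}{dt}E_1^{(j)}[w]
&\le -\int_\Omega a|\partial_t w|^2\Psi^{\lambda+2j+1}\,dx
+C\int_\Omega|\nabla w|^2\Psi^{\lambda+2j}\,dx
+C\int_\Omega a^{-1}|F|^2\Psi^{\lambda+2j+1}\,dx.
\end{align*}
Then I set $E^{(j)}[w]:=\nu^{(j)}E_1^{(j)}[w]+E_0^{(j)}[w]$, choose $\nu^{(j)}$ so small that the $\int|\nabla w|^2\Psi^{\lambda+2j}$ coefficient $\nu^{(j)}C-2$ is negative, and then $t_*^{(j)}$ so large that $C t_0^{-c}\le\frac{\nu^{(j)}}{2}$; for $t_0\ge t_*^{(j)}$ one obtains $\frac{d}{dt}E^{(j)}[w]\le-\eta\int|\nabla w|^2\Psi^{\lambda+2j}-\eta\int a|\partial_t w|^2\Psi^{\lambda+2j+1}+C\int a|w|^2\Psi^{\lambda+2j-1}+C\int a^{-1}|F|^2\Psi^{\lambda+2j+1}$ for some $\eta>0$. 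Integrating over $[0,t]$ — with $E^{(j)}[w](0)<\infty$ following from the assumed membership of $(w_0,w_1)$, and the two space--time integrals on the right finite by hypothesis — gives $E^{(j)}[w](t;t_0,\lambda)\in L^\infty(0,\infty)$ and $\int_\Omega a|\partial_t w|^2\Psi^{\lambda+2j+1}\,dx\in L^1(0,\infty)$, which is the assertion.

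The only genuinely delicate part is the bookkeeping in the two energy identities: one must verify that every $\partial_t w$ term other than the designated good one carries a weight bounded by $C t_0^{-c}a\Psi^{\lambda+2j+1}$ — which in each case reduces to $\Psi^{-1}\lesssim t_0^{-(1-\alpha)/(2-\alpha)}a$, valid precisely because $\alpha<1$ — and that the lower-order $|w|^2$ terms are exactly those controlled by the hypothesis $\int_\Omega a|w|^2\Psi^{\lambda-1+2j}\,dx\in L^1(0,\infty)$. No idea beyond Theorem~\ref{thm:en:w:1} and Lemma~\ref{lem:v2} is needed; the stated dependence of $\nu^{(j)}$ and $t_*^{(j)}$ on $N,\alpha,\lambda,j$ comes out of the constants in Lemma~\ref{lem_A_ep} together with the number $2j$ of extra powers of $\Psi$.
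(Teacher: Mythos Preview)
Your proof is correct and follows essentially the same approach as the paper: you reproduce the content of Lemmas~\ref{lem:en:w:E0j} and~\ref{lem:en:w:E1j} (the paper's estimates for $\frac{d}{dt}E_0^{(j)}$ and $\frac{d}{dt}E_1^{(j)}$) and combine them exactly as the paper does, choosing $\nu^{(j)}$ small and then $t_*^{(j)}$ large. The only cosmetic differences are that you use the identity $2w\Delta w=-2|\nabla w|^2+\Delta(|w|^2)$ (as in Lemma~\ref{lem:v2}) rather than a direct integration by parts, and you convert the stray $\partial_t w$ contributions to the form $Ct_0^{-c}\int a|\partial_t w|^2\Psi^{\lambda+2j+1}$ already in the $E_0^{(j)}$ estimate rather than at the combination stage.
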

For the proof of Lemma \ref{lem:en:w:Ej},
we further prepare
the following two lemmas.
\begin{lemma}\label{lem:en:w:E0j}
Under the assumptions of Lemma \ref{lem:en:w:Ej},
we have
\begin{align}
    \frac{d}{dt} E_0^{(j)}[w](t;t_0,\lambda)
    &\le
    C \int_{\Omega} |\partial_t w|^2 \Psi^{\lambda + 2j} \,dx
    - \int_{\Omega} |\nabla w (x,t)|^2 \Psi(x,t;t_0)^{\lambda+2j} \,dx \\
    &\quad
    + C \int_{\Omega} a(x) |w(x,t)|^2 \Psi(x,t;t_0)^{\lambda -1 +2j} \,dx \\
    &\quad
    +C \int_{\Omega} a(x)^{-1} |F(x,t)|^2 \Psi(x,t;t_0)^{\lambda+1+2j} \,dx
\end{align}
with some constant
$C>0$.
\end{lemma}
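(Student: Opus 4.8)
The plan is to differentiate $E_0^{(j)}[w]$ directly and insert the equation \eqref{eq:gDW}. Since $\partial_t\Psi=1$, one has $\partial_t\bigl(\Psi^{\lambda+2j}\bigr)=(\lambda+2j)\Psi^{\lambda-1+2j}$, so
\begin{align*}
    \frac{d}{dt}E_0^{(j)}[w]
    &= \int_{\Omega} 2|\partial_t w|^2\,\Psi^{\lambda+2j}\,dx
        + \int_{\Omega} 2w\bigl(\partial_t^2 w + a(x)\partial_t w\bigr)\Psi^{\lambda+2j}\,dx \\
    &\quad + (\lambda+2j)\int_{\Omega}\bigl(2w\partial_t w + a(x)|w|^2\bigr)\Psi^{\lambda-1+2j}\,dx .
\end{align*}
Replacing $\partial_t^2 w + a(x)\partial_t w$ by $\Delta w + F$ (the two contributions $\pm a(x)w\partial_t w\,\Psi^{\lambda+2j}$ cancel), and then using $2w\Delta w=\Delta(|w|^2)-2|\nabla w|^2$ together with $w|_{\partial\Omega}=0$ to integrate by parts, the middle integral becomes
\[
    -2\int_{\Omega}|\nabla w|^2\,\Psi^{\lambda+2j}\,dx
    + \int_{\Omega}|w|^2\,\Delta\bigl(\Psi^{\lambda+2j}\bigr)\,dx
    + 2\int_{\Omega} wF\,\Psi^{\lambda+2j}\,dx .
\]
This already exhibits the good term $-2\int_{\Omega}|\nabla w|^2\Psi^{\lambda+2j}\,dx$, which is stronger than the one claimed. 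As in the proof of Lemma \ref{lem:v1}, the integration by parts on the unbounded domain against the growing weight is justified rigorously by the approximation argument of \cite[Section 4]{SoWa21_JMSJ}.

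It then remains to absorb the error terms into the right-hand side. Since $\nabla\Psi=\nabla A_{\varepsilon}$ and $\Delta\Psi=\Delta A_{\varepsilon}$, we have
\[
    \Delta\bigl(\Psi^{\lambda+2j}\bigr)
    = (\lambda+2j)\,\Psi^{\lambda-1+2j}\,\Delta A_{\varepsilon}
      + (\lambda+2j)(\lambda-1+2j)\,\Psi^{\lambda-2+2j}\,|\nabla A_{\varepsilon}|^2 ,
\]
and \eqref{A1}, \eqref{A3} together with $A_{\varepsilon}\le\Psi$ give $|\Delta(\Psi^{\lambda+2j})|\le C a(x)\Psi^{\lambda-1+2j}$, hence the $|w|^2$-integral above is bounded by $C\int_{\Omega}a(x)|w|^2\Psi^{\lambda-1+2j}\,dx$. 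For the inhomogeneous term, the Schwarz inequality gives
\[
    2\int_{\Omega}|w||F|\,\Psi^{\lambda+2j}\,dx
    \le \int_{\Omega}a(x)|w|^2\,\Psi^{\lambda-1+2j}\,dx
      + \int_{\Omega}a(x)^{-1}|F|^2\,\Psi^{\lambda+1+2j}\,dx ,
\]
and for the cross term in the lower-order part, Schwarz bounds $(\lambda+2j)\int_{\Omega}2w\partial_t w\,\Psi^{\lambda-1+2j}\,dx$ by $C\int_{\Omega}a(x)|w|^2\Psi^{\lambda-1+2j}\,dx+C\int_{\Omega}a(x)^{-1}|\partial_t w|^2\Psi^{\lambda-1+2j}\,dx$; here one uses the elementary inequality $a(x)^{-1}\Psi^{-1}\le C$, which holds for all $t_0\ge1$ because $\alpha<1$ forces $a(x)\Psi\gtrsim\langle x\rangle^{2-2\alpha}\gtrsim1$, to turn the last integral into $C\int_{\Omega}|\partial_t w|^2\Psi^{\lambda+2j}\,dx$. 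Collecting all contributions yields exactly the asserted inequality.

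I do not anticipate a genuine obstacle here: this is the damped-wave counterpart of the heat estimate in Lemma \ref{lem:v2}, the only new features being the kinetic term $2|\partial_t w|^2\Psi^{\lambda+2j}$ produced by differentiating $2w\partial_t w$ --- harmless, since such a term is permitted on the right-hand side --- and the cancellation of the $a(x)w\partial_t w\,\Psi^{\lambda+2j}$ terms noted above. The only delicate point, as is usual in this circle of ideas, is the rigorous justification of the integration by parts on $\Omega$ against the unbounded weight $\Psi^{\lambda+2j}$; since the needed approximation scheme is already available from \cite{SoWa21_JMSJ} and was invoked in Lemma \ref{lem:v1}, here it merely has to be cited, while the remaining regularity of $w$ is supplied by Theorem \ref{thm:WP:gDW}.
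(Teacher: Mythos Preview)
Your proof is correct and follows essentially the same route as the paper's: differentiate $E_0^{(j)}[w]$, substitute the equation, integrate by parts on the $w\Delta w$ term, and control each remainder by Schwarz together with the weight estimates $|\nabla A_\varepsilon|^2\le Ca(x)A_\varepsilon$, $\Delta A_\varepsilon\le Ca(x)$, $A_\varepsilon\le\Psi$, and $a(x)^{-1}\Psi^{-1}\le C$. The only cosmetic difference is that the paper integrates by parts once to produce the cross term $-2(\lambda+2j)\int_\Omega w(\nabla w\cdot\nabla\Psi)\Psi^{\lambda-1+2j}\,dx$ and then estimates it by Schwarz (sacrificing half of the $-2\int|\nabla w|^2\Psi^{\lambda+2j}$), whereas you integrate by parts a second time to land on $\int_\Omega|w|^2\Delta(\Psi^{\lambda+2j})\,dx$, exactly as in the proof of Lemma~\ref{lem:v2}; both variants yield the stated inequality.
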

\begin{proof}
We compute
\begin{align}
    \frac{d}{dt} E_0^{(j)}[w](t;t_0,\lambda)
    &= 2 \int_{\Omega} | \partial_t w|^2 \Psi^{\lambda+2j} \,dx
        + 2 \int_{\Omega} w \left(\partial_t^2 w + a(x) \partial_t w \right) \Psi^{\lambda+2j} \,dx \\
    &\quad + 2 (\lambda + 2j) \int_{\Omega}
        \left(
            2 w \partial_t w + a(x) |w|^2
        \right) \Psi^{\lambda-1+2j} \,dx \\
    &= 2 \int_{\Omega} | \partial_t w|^2 \Psi^{\lambda+2j} \,dx
        + 2 \int_{\Omega} w \left( \Delta w + F \right) \Psi^{\lambda+2j} \,dx \\
    &\quad + 2(\lambda + 2j) \int_{\Omega}
        \left(
            2 w \partial_t w + a(x) |w|^2
        \right) \Psi^{\lambda-1+2j} \,dx \\
    &= 2 \int_{\Omega} | \partial_t w|^2 \Psi^{\lambda+2j} \,dx
        - 2 \int_{\Omega} |\nabla w|^2 \Psi^{\lambda + 2j} \,dx \\
    &\quad
        - 2(\lambda+2j ) \int_{\Omega} w ( \nabla w \cdot \nabla \Psi )
            \Psi^{\lambda -1 +2j} \,dx \\
    &\quad + 2(\lambda + 2j) \int_{\Omega}
        \left(
            2 w \partial_t w + a(x) |w|^2
        \right) \Psi^{\lambda -1 +2j} \,dx \\
    &\quad + 2 \int_{\Omega} w F \Psi^{\lambda+2j} \,dx.
\end{align}
The Schwarz inequality implies
\begin{align}
    &\left| 2(\lambda+2j ) \int_{\Omega} w ( \nabla w \cdot \nabla \Psi )
            \Psi^{\lambda -1 +2j} \,dx \right| \\
    &\le \int_{\Omega} |\nabla w|^2 \Psi^{\lambda +2j} \,dx
    + C \int_{\Omega} a(x) |w|^2 \frac{|\nabla \Psi|^2}{a(x) \Psi} \Psi^{\lambda -1 + 2j} \,dx \\
    &\le \int_{\Omega} |\nabla w|^2 \Psi^{\lambda +2j} \,dx
    + C \int_{\Omega} a(x) |w|^2 \Psi^{\lambda -1+2j} \,dx,
\end{align}
where we have used
$\nabla \Psi = \nabla A_{\varepsilon}(x)$,
$\Psi(x,t;t_0) \ge A_{\varepsilon}(x)$,
and \eqref{A3} in Lemma \ref{lem_A_ep}.
Similarly, we have
\begin{align}
    &\left| 2(\lambda + 2j) \int_{\Omega} 2 w \partial_t w \Psi^{\lambda-1+2j} \,dx \right| \\
    &\le C \int_{\Omega} |\partial_t w|^2 \Psi^{\lambda + 2j} \,dx
    + C \int_{\Omega} a(x) |w|^2 a(x)^{-1} \Psi^{\lambda -2+2j} \,dx \\
    &\le C \int_{\Omega} |\partial_t w|^2 \Psi^{\lambda + 2j} \,dx
    + C \int_{\Omega} a(x) |w|^2 \Psi^{\lambda -1 +2j} \,dx,
\end{align}
and
\begin{align}
    &\left| 2 \int_{\Omega} w F \Psi^{\lambda+2j} \,dx \right| \\
    &\le C \int_{\Omega} a(x) |w|^2 a(x)^{\lambda-1+2j} \,dx
    + C \int_{\Omega} a(x)^{-1} |F|^2 \Psi^{\lambda+1+2j} \,dx.
\end{align}
This completes the proof.
\end{proof}
\begin{lemma}\label{lem:en:w:E1j}
Under the assumptions of Lemma \ref{lem:en:w:Ej},
there exists
$t_2 \ge 1$
such that for any $t_0 \ge t_2$ and $t>0$,
we have
\begin{align}
    E_1^{(j)}[w](t;t_0,\lambda)
    &\le - \int_{\Omega} a(x) |\partial_t w(x,t)|^2 \Psi(x,t;t_0)^{\lambda+1+2j} \,dx \\
    &\quad
    + C \int_{\Omega} |\nabla w(x,t)|^2 \Psi(x,t;t_0)^{\lambda+2j} \,dx \\
    &\quad
    + C \int_{\Omega} a(x)^{-1} |F(x,t)|^2 \Psi(x,t;t_0)^{\lambda+1+2j} \,dx
\end{align}
with some constant
$C=C(N,\alpha,\delta,\varepsilon,\lambda,j,t_2)>0$.
\end{lemma}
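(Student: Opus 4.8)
The plan is to differentiate the weighted energy in time and to reproduce, essentially verbatim, the argument of Lemma~\ref{lem:en:w:E1}, with the exponent $\lambda+1$ there replaced throughout by $\lambda+1+2j$. First I would compute
\[
    \frac{d}{dt} E_1^{(j)}[w](t;t_0,\lambda)
    = 2\int_{\Omega} \bigl( \nabla\partial_t w\cdot\nabla w + \partial_t w\,\partial_t^2 w \bigr)\Psi^{\lambda+1+2j}\,dx
    + (\lambda+1+2j)\int_{\Omega}\bigl(|\nabla w|^2+|\partial_t w|^2\bigr)\Psi^{\lambda+2j}\,dx ,
\]
where differentiation under the integral sign is legitimate by the $C^1$ in time regularity of $w$ from Theorem~\ref{thm:WP:gDW}. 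Then I would integrate by parts in the first term to rewrite $2\int_{\Omega}\nabla\partial_t w\cdot\nabla w\,\Psi^{\lambda+1+2j}\,dx$ as $-2\int_{\Omega}\partial_t w\,\Delta w\,\Psi^{\lambda+1+2j}\,dx - 2(\lambda+1+2j)\int_{\Omega}\partial_t w\,(\nabla w\cdot\nabla A_{\varepsilon})\,\Psi^{\lambda+2j}\,dx$, using $\nabla\Psi=\nabla A_{\varepsilon}$. This integration by parts is justified because the choice $m=(\lambda+1+2j)(2-\alpha)/2$ in Lemma~\ref{lem:en:w:Ej} makes $\partial_t w\,\nabla w\,\Psi^{\lambda+1+2j}\in L^1(\Omega)$ for each fixed $t$, and the boundary and spatial-infinity contributions are controlled by the approximation argument of \cite[Section~4]{SoWa21_JMSJ}, exactly as in the remark following Lemma~\ref{lem:v1}.

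Next, inserting the equation \eqref{eq:gDW} in the form $-\Delta w+\partial_t^2 w=-a(x)\partial_t w+F$ extracts the good damping term $-2\int_{\Omega}a(x)|\partial_t w|^2\Psi^{\lambda+1+2j}\,dx$ together with a forcing term $2\int_{\Omega}\partial_t w\,F\,\Psi^{\lambda+1+2j}\,dx$. It then remains to absorb three error terms into the damping term. The forcing term is bounded by Young's inequality by $\tfrac14\int_{\Omega}a(x)|\partial_t w|^2\Psi^{\lambda+1+2j}\,dx + 4\int_{\Omega}a(x)^{-1}|F|^2\Psi^{\lambda+1+2j}\,dx$. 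The cross term $-2(\lambda+1+2j)\int_{\Omega}\partial_t w\,(\nabla w\cdot\nabla A_{\varepsilon})\,\Psi^{\lambda+2j}\,dx$ is bounded, again by Young, by $\tfrac14\int_{\Omega}a(x)|\partial_t w|^2\Psi^{\lambda+1+2j}\,dx + C\int_{\Omega}|\nabla w|^2\frac{|\nabla A_{\varepsilon}|^2}{a(x)\Psi}\Psi^{\lambda+2j}\,dx$, and here $|\nabla A_{\varepsilon}|^2/(a(x)\Psi)\le |\nabla A_{\varepsilon}|^2/(a(x)A_{\varepsilon})\le (2-\alpha)/(N-\alpha)+\varepsilon$ by \eqref{A3}, so this is at most $\tfrac14\int_{\Omega}a(x)|\partial_t w|^2\Psi^{\lambda+1+2j}\,dx + C\int_{\Omega}|\nabla w|^2\Psi^{\lambda+2j}\,dx$. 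Finally, the stray piece $(\lambda+1+2j)\int_{\Omega}|\partial_t w|^2\Psi^{\lambda+2j}\,dx$ produced by the $t$-derivative of the weight is handled by choosing $t_2$ so large that, for $t_0\ge t_2$, one has $\Psi^{-1}\le t_0^{-1+\frac{\alpha}{2-\alpha}}A_{\varepsilon}(x)^{-\frac{\alpha}{2-\alpha}}\le C t_0^{-1+\frac{\alpha}{2-\alpha}}a(x)\le \frac{a(x)}{2(\lambda+1+2j)}$, whence this term is at most $\tfrac12\int_{\Omega}a(x)|\partial_t w|^2\Psi^{\lambda+1+2j}\,dx$.

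Collecting these contributions, the net coefficient of $\int_{\Omega}a(x)|\partial_t w|^2\Psi^{\lambda+1+2j}\,dx$ becomes $-2+\tfrac14+\tfrac14+\tfrac12=-1$, which yields
\[
    \frac{d}{dt}E_1^{(j)}[w](t;t_0,\lambda)
    \le -\int_{\Omega}a(x)|\partial_t w|^2\Psi^{\lambda+1+2j}\,dx
    + C\int_{\Omega}|\nabla w|^2\Psi^{\lambda+2j}\,dx
    + C\int_{\Omega}a(x)^{-1}|F|^2\Psi^{\lambda+1+2j}\,dx ,
\]
the desired bound. I do not anticipate a genuine obstacle: each estimate is a routine rescaling of its counterpart in the proof of Lemma~\ref{lem:en:w:E1}, and the only point that must be tracked with care is that the weighted Sobolev regularity assumed in Lemma~\ref{lem:en:w:Ej} (precisely the exponent $m=(\lambda+1+2j)(2-\alpha)/2$) is exactly what is needed to make every integral above finite and to validate the differentiation under the integral and the integration by parts.
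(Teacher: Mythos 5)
Your proposal is correct and follows essentially the same route as the paper: the paper's own proof of this lemma consists of the remark that one repeats the proof of Lemma \ref{lem:en:w:E1} with $\lambda$ replaced by $\lambda+2j$, which is precisely the computation you carried out (same integration by parts with $\nabla\Psi=\nabla A_{\varepsilon}$, same use of \eqref{A3}, same choice of large $t_0$ to absorb the term coming from the time derivative of the weight, same $-2+\tfrac14+\tfrac14+\tfrac12=-1$ bookkeeping). Note only that the displayed inequality in the lemma should have $\frac{d}{dt}E_1^{(j)}[w](t;t_0,\lambda)$ on the left-hand side (a typo in the statement), which is the differential inequality you proved and the form actually used in the proof of Lemma \ref{lem:en:w:Ej}.
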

The proof is completely the same as that of
Lemma \ref{lem:en:w:E1}
by replacing $\lambda$ by $\lambda+2j$.
Thus, we omit the detail.
\begin{proof}[Proof of Lemma \ref{lem:en:w:Ej}]
By Lemmas \ref{lem:en:w:E0j} and \ref{lem:en:w:E1j},
taking $\nu^{(j)} > 0$ sufficiently small,
and then, taking
$t_*^{(j)} \ge t_2$
sufficiently large depending on
$\nu^{(j)}$,
we have
\begin{align}
    \frac{d}{dt} E^{(j)} [w] (t;t_0,\lambda,\nu^{(j)})
    &= \nu^{(j)} \frac{d}{dt} E_1^{(j)} [w] (t;t_0,\lambda)
        + \frac{d}{dt} E_0^{(j)} [w] (t;t_0,\lambda) \\
    &\le
    - \eta_1 \int_{\Omega} a(x) |\partial_t w|^2 \Psi^{\lambda+1+2j} \,dx
    - \eta_2 \int_{\Omega} |\nabla w|^2 \Psi^{\lambda + 2j} \,dx \\
    &\quad
    +  C \int_{\Omega} a(x) |w|^2 \Psi^{\lambda-1+2j} \,dx
    + C \int_{\Omega} a(x)^{-1} |F|^2 \Psi^{\lambda+1+2j} \,dx
\end{align}
for $t_0 \ge t_*^{(j)}$ and $t>0$
with some constants
$\eta_1, \eta_2 > 0$.
By integrating the above inequality on
$[0,t]$
and using the assumptions, we conclude
\begin{align}
    &E^{(j)} [w](t;t_0,\lambda,\nu^{(j)})
    + \int_0^t \int_{\Omega}
        a(x) |\partial_t w|^2 \Psi^{\lambda+1+2j} \,dx d\tau
    + \int_0^t \int_{\Omega}
        |\nabla w|^2 \Psi^{\lambda + 2j} \,dx d\tau \\
    &\le
    E^{(j)} [w] (0;t_0,\lambda,\nu^{(j)})
    + C \int_0^t \int_{\Omega} a(x) |w|^2 \Psi^{\lambda-1+2j} \,dx d\tau \\
    &\quad + C \int_0^t \int_{\Omega} a(x)^{-1} |F|^2 \Psi^{\lambda+1+2j} \,dx d\tau,
\end{align}
and the proof is complete.
\end{proof}
\begin{proof}[Proof of Theorem \ref{thm:en:w:k}]
By Theorem \ref{thm:en:w:1},
there exist
$\nu>0$ and $t_* \ge 1$
such that
\begin{align}
    &E[w] (t;t_0,\lambda, \nu) \in L^{\infty}(0,\infty),\\
\label{eq:en:dw:dtw:L1:1}
    &\int_{\Omega} a(x) |\partial_t w(x,t)|^2 \Psi(x,t;t_0)^{\lambda+1} \,dx \in L^1(0,\infty)
\end{align}
hold for
$t_0 \ge t_*$.
Now, thanks to the property \eqref{eq:en:dw:dtw:L1:1},
we apply
Lemma \ref{lem:en:w:Ej} with
$j=1$
and the replacement of
$w$ and $F$ by $\partial_t w$ and $\partial_t F$,
respectively.
Then, there exist
$\nu^{(1)}>0$
and
$t_*^{(1)} \ge 1$
such that
\begin{align}
    &E^{(1)}[\partial_t w](t;t_0,\lambda,\nu^{(1)}) \in L^{\infty}(0,\infty),\\
    &\int_{\Omega} a(x) |\partial_t^2 w(x,t)|^2 \Psi(x,t;t_0)^{\lambda+1+2} \,dx \in L^1(0,\infty)
\end{align}
hold for
$t_0 \ge t_*^{(1)}$.
The latter property allows us to
apply again Lemma \ref{lem:en:w:Ej} with
$j=2$ and the replacement of
$w$ and $F$ by $\partial_t^2 w$ and $\partial_t^2 F$,
respectively.
Repeating this argument until
$j=k$,
we reach the conclusion of Theorem \ref{thm:en:w:k}.
\end{proof}

\section{Proof of the asymptotic expansion}
In this section,
we give the estimates of the right-hand side of
\eqref{eq:u_decomp} and
complete the proof of Theorem \ref{thm:asym}.

Let $n \in \mathbb{N}$
be fixed,
and let
$\delta \in (0,1/2)$,
$\varepsilon \in (0,1/2)$,
$\lambda \in [0,(1-2\delta)\gamma_{\varepsilon})$,
and
$t_0 \ge 1$.
Moreover, we define
$\lambda_j = \lambda - \frac{2\alpha}{2-\alpha}j$
for $j=1,\ldots,n+1$,
and we assume that
$\lambda_{n+1} \in [0,(1-2\delta)\gamma_{\varepsilon})$,
that is,
$\lambda \in [\frac{2\alpha}{2-\alpha}(n+1), (1-2\delta)\gamma_{\varepsilon})$.
Here, we note that
the assumption
$n+1 < \frac{N-\alpha}{2\alpha}$
ensures that this interval is not empty,
provided that
$\delta$ and $\varepsilon$
are sufficiently small.
We also put
$\tilde{m} := (\lambda_{n+1}+1+2n)\frac{2-\alpha}{2}$.

As in \eqref{ass:thm:asym},
we assume that the initial data
$u_0$ and $u_1$ satisfy
\begin{align}
    u_0 \in H^{s+1,m}(\Omega)\cap H_0^{s,m}(\Omega),\quad
    u_1 \in H^{s,m}_0(\Omega)
\end{align}
with sufficiently large
$s$ and $m$.

Note that, in what follows,
we retake the parameter
$t_0 \ge 1$
suitably larger from line to line.

\textbf{Step 0: Estimates of $V_0$:}
We first give the estimates of
$V_0$,
which is the solution of
\eqref{hx}.
We apply Theorem \ref{thm:v}
with
$k=n$,
$v=V_0$,
$G=0$,
$v_0 = u_0+ a(x)^{-1}u_1$.
Noting Lemma \ref{lem:D(L)}
and taking
$s$ and $m$ sufficiently large,
we have
\begin{align}
   \partial_t^j V_0(x,0)
   &\in
   D(L) \cap
   H^{0, (\lambda+2j) \frac{2-\alpha}{2} -\frac{\alpha}{2}}(\Omega),\\
   \nabla \partial_t^j V_0(x,0)
   &\in H^{0, (\lambda+1+2j) \frac{2-\alpha}{2}}(\Omega)
\end{align}
for $j=0,1,\ldots, n$,
where 
$\partial_t^j V_0(x,0)$
is defined by the right-hand side of
\eqref{eq:dtv0}
with
$v_0 = u_0 + a(x)^{-1}u_1$
and
$G=0$.
Therefore, the assumptions of Theorem \ref{thm:v} are fulfilled, and we have
\begin{align}
\label{eq:est:V0:Linf}
    &\int_{\Omega} a(x) |\partial_t^j V_0(x,t)|^2 \Psi(x,t;t_0)^{\lambda+2j} \,dx \in L^{\infty}(0,\infty),\\
\label{eq:est:V0:L1}
    &\int_{\Omega} a(x) |\partial_t^{j+1} V_0(x,t)|^2 \Psi(x,t;t_0)^{\lambda+1+2j} \,dx \in L^1(0,\infty)
\end{align}
for $j=0,1,\ldots, n$.
In particular,
\eqref{eq:est:V0:Linf} with
$j=0$
implies
the $L^2$-estimate of $V_0$:
\begin{align}
    \left\| V_0(t) \right\|_{L^2(\Omega)}
    \le C (1+t)^{-\frac{\lambda}{2}+\frac{\alpha}{2(2-\alpha)}}.
\end{align}

\textbf{Step 1: Estimates of $V_1$:}
Next, we consider the estimate for $V_1$,
which is the solution of
\eqref{eq:intro:V_j} with $j=1$.
We apply Theorem \ref{thm:v} with
$k=n$,
$v=V_1$,
$v_0 = -a(x)^{-2}u_1(x)$,
$G=-\partial_t V_0$,
and the replacement of
$\lambda$
by
$\lambda_1 = \lambda - \frac{2\alpha}{2-\alpha}$.
Similarly as before,
If we take
$s$ and $m$ sufficiently large,
then we have
\begin{align}
   \partial_t^j V_1(x,0)
   &\in
   D(L) \cap
   H^{0, (\lambda_1+2j) \frac{2-\alpha}{2} -\frac{\alpha}{2}}(\Omega),\\
   \nabla \partial_t^j V_1(x,0)
   &\in H^{0, (\lambda_1+1+2j) \frac{2-\alpha}{2}}(\Omega)
\end{align}
for $j=0,1,\ldots, n$,
where
$\partial_t^j V_1(x,0)$
is defined by the right-hand side of
\eqref{eq:dtv0} with
$v_0 = -(-a(x))^{-2}$
and
$G = -\partial_t V_0$.
Moreover, from \eqref{eq:est:V0:L1},
one obtains
\begin{align}
    &\int_{\Omega} a(x)^{-1} |\partial_t^{j}(-\partial_t V_0)(x,t)|^2 \Psi(x,t;t_0)^{\lambda_1 +1+2j} \,dx \\
    &\le \int_{\Omega} a(x) |\partial_t^{j+1} V_0(x,t)|^2 \Psi(x,t;t_0)^{\lambda+1+2j} \,dx
    \in L^1(0,\infty)
\end{align}
for $j=0,1,\ldots,n$
(this is the reason why we define
$\lambda_1=\lambda-\frac{2\alpha}{2-\alpha}$).
Therefore, the assumptions of Theorem \ref{thm:v}
are fulfilled,
and we deduce
\begin{align}
\label{eq:est:V1:Linf}
    &\int_{\Omega} a(x) |\partial_t^j V_1(x,t)|^2 \Psi(x,t;t_0)^{\lambda_1 + 2j} \,dx \in L^{\infty} (0,\infty),\\
    &\int_{\Omega} a(x) |\partial_t^{j+1} V_1(x,t)|^2 \Psi(x,t;t_0)^{\lambda_1+1+2j} \,dx
    \in L^1(0,\infty)
\end{align}
for $j=0,1,\ldots, n$.
In particular,
\eqref{eq:est:V1:Linf} with $j=1$
implies 
the $L^2$-estimate of
$\partial_t V_1$:
\begin{align}
    \left\| \partial_t V_1 \right\|_{L^2}
    \le C (1+t)^{-\frac{\lambda_1}{2}-1+\frac{\alpha}{2(2-\alpha)}}
    \le C (1+t)^{-\frac{\lambda}{2}- \frac{2(1-\alpha)}{2-\alpha} + \frac{\alpha}{2(2-\alpha)}}.
\end{align}

\textbf{Step $n$: Estimates of $V_n$:}
Continuing this argument until
$j=n$,
we can estimate
$V_n$.
Indeed, 
we apply Theorem \ref{thm:v} with
$k=n$,
$v=V_n$,
$v_0 = -(-a(x))^{-n-1} u_1(x)$,
$G=-\partial_t V_{n-1}$,
and the replacement of
$\lambda$
by
$\lambda_n = \lambda - \frac{2\alpha}{2-\alpha}n$.
Similarly as before,
if we take
$s$ and $m$ sufficiently large,
then we have
\begin{align}
    \partial_t^j V_n(x,0)
    &\in
    D(L) \cap
    H^{0,(\lambda_n+2j)\frac{2-\alpha}{2}-\frac{\alpha}{2}}(\Omega),\\
    \nabla \partial_t^j V_n(x,0)
    &\in H^{0,(\lambda_n+1+2j)\frac{2-\alpha}{2}}(\Omega)
\end{align}
for $j=0,1,\ldots,n$,
where
$\partial_t^j V_n(x,0)$
is defined by the right-hand side of
\eqref{eq:dtv0} with 
$v_0 = -(-a(x))^{-n-1}$
and
$G = -\partial_t V_{n-1}$.
Furthermore, by the $(n-1)$-th step, we have the estimate of inhomogeneous term $-\partial_t V_{n-1}$:
\begin{align}
    &\int_{\Omega} a(x)^{-1} |\partial_t^j (-\partial_t V_{n-1})(x,t)|^2 \Psi(x,t;t_0)^{\lambda_n+1+2j} \,dx \\
    &\le \int_{\Omega} a(x) |\partial_t^{j+1} V_{n-1}(x,t)|^2 \Psi(x,t;t_0)^{\lambda_{n-1}+1+2j} \,dx 
    \in L^1(0,\infty).
\end{align}
Thus, the assumptions of Theorem \ref{thm:v}
are fulfilled, and we have
\begin{align}
\label{eq:Vn:Linfty}
    &\int_{\Omega} a(x) |\partial_t^j V_n(x,t)|^2 \Psi(x,t;t_0)^{\lambda_n + 2j} \,dx
    \in L^{\infty}(0,\infty),\\
\label{eq:Vn:L1}
    &\int_{\Omega} a(x) | \partial_t^{j+1} V_n(x,t)|^2 \Psi(x,t;t_0)^{\lambda_n + 1 + 2j} \,dx \in L^1(0,\infty)
\end{align}
for $j=0,1,\ldots,n$.
In particular,
\eqref{eq:Vn:Linfty} with $j=n$
implies
the $L^2$-estimate of $\partial_t^n V_n$:
\begin{align}\label{eq:Vn:dtn:L2}
    \left\| \partial_t^n V_n(t) \right\|_{L^2(\Omega)}
    \le C (1+t)^{-\frac{\lambda_n}{2} -n +\frac{\alpha}{2(2-\alpha)}}
    = C(1+t)^{-\frac{\lambda}{2} - \frac{2n(1-\alpha)}{2-\alpha} + \frac{\alpha}{2(2-\alpha)}}.
\end{align}

Moreover,
we shall check that
$V_n$ has the regularity
\begin{align}\label{eq:reg:Vn}
    \partial_t V_n \in \bigcap_{j=0}^n C^{j+1} ([0,\infty); H^{n-j,\tilde{m}}(\Omega))
\end{align}
which will be required in the next step.
This follows if both the initial value
$\partial_t V_n(x,0)$ and the inhomogeneous term
$-\partial_t V_{n-1}$ have enough regularity and belong to a suitable weighted Sobolev space.
Note that
$\partial_t V_n(x,0)$
can be computed from \eqref{eq:dtv0},
and the regularity of
$-\partial_t V_{n-1}$
is obtained by a similar iteration argument
as Steps $0,1,\ldots,n$.
Thus, we can obtain \eqref{eq:reg:Vn}
if $s$ and $m$ are sufficiently large.

\textbf{Final step: Estimates of $U_{n+1}$:}
Finally, we estimate
$\partial_t^{n+1}U_{n+1}$
defined by \eqref{eq:intro:U_n+1} with $j=n$.
First, to check the compatibility condition on
$U_{n+1}$,
we prepare the following lemma:
\begin{lemma}\label{lem:compatibility}
Let
$U_{n+1}^{(0)} = 0$,
$U_{n+1}^{(1)} = -(-a(x))^{-n-1}u_1(x)$,
and let
$U_{n+1}^{(p)}$
for $p=2,\ldots,n+1$
be successively defined by
\begin{align}
    U_{n+1}^{(p)}(x) = \Delta U_{n+1}^{(p-2)}
    - a(x) U_{n+1}^{(p-1)}
    - \partial_t^{p-1} V_n(x,0).
\end{align}
Then, we have, for 
$p=2,\ldots, n+1$,
\begin{align}
    U_{n+1}^{(p)}(x)
    = (-a(x))^{-(n-p+2)}u_1(x)
    - \partial_t V_{n-p+2}(x,0)
    - \cdots
    - \partial_t^{p-1} V_n(x,0).
\end{align}
\end{lemma}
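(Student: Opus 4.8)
The plan is to prove the identity by induction on $p$, reading the recursion
\[
U_{n+1}^{(p)} = \Delta U_{n+1}^{(p-2)} - a(x)\,U_{n+1}^{(p-1)} - \partial_t^{p-1}V_n(x,0), \qquad p=2,\dots,n+1,
\]
as the definition of $U_{n+1}^{(p)}$ (with $U_{n+1}^{(0)}=0$ and $U_{n+1}^{(1)}=(-a(x))^{-n-1}u_1(x)$, the initial velocity in \eqref{eq:U_n+1}), and feeding in three ingredients: (a) the initial data $V_j(x,0) = -(-a(x))^{-j-1}u_1(x)$, valid for $j=1,\dots,n$ — note that $V_0(x,0)=u_0+a(x)^{-1}u_1$ is \emph{not} of this form, which is why the term $u_0$ never enters the expressions below; (b) the scalar sign rule $a(x)(-a(x))^{-m} = -(-a(x))^{-(m-1)}$, which in particular rewrites $(-a(x))^{-m}u_1 = -V_{m-1}(x,0)$ whenever $m\ge 2$; and (c) the parabolic equations $a(x)\partial_t V_j - \Delta V_j = -\partial_t V_{j-1}$, differentiated $k$ times in $t$ and evaluated at $t=0$, i.e. $\Delta\partial_t^k V_j(x,0) = a(x)\partial_t^{k+1}V_j(x,0) + \partial_t^{k+1}V_{j-1}(x,0)$ for $j\ge 1$. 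The relations in (c) are legitimate because the regularity of $V_0,\dots,V_n$ obtained in Steps $0$ through $n$ of this section (together with \eqref{eq:reg:Vn}) is, for $s$ and $m$ large, strong enough to commute $\Delta$ with $\partial_t^k$ and to take the point values $\partial_t^k V_j(\cdot,0)$ for the finitely many $j,k$ that occur.

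For the base case $p=2$ one computes directly, using $U_{n+1}^{(0)}=0$ and (b),
\[
U_{n+1}^{(2)} = -a(x)U_{n+1}^{(1)} - \partial_t V_n(x,0) = -a(x)(-a(x))^{-n-1}u_1 - \partial_t V_n(x,0) = (-a(x))^{-n}u_1 - \partial_t V_n(x,0),
\]
which is the asserted formula at $p=2$. For the inductive step with $3\le p\le n+1$, substitute the induction hypotheses
\[
U_{n+1}^{(p-1)} = (-a)^{-(n-p+3)}u_1 - \sum_{k=1}^{p-2}\partial_t^k V_{n-p+2+k}(x,0), \qquad
U_{n+1}^{(p-2)} = (-a)^{-(n-p+4)}u_1 - \sum_{k=1}^{p-3}\partial_t^k V_{n-p+3+k}(x,0)
\]
into the recursion. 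Using (b) to turn the $u_1$-term of $U_{n+1}^{(p-2)}$ into $-V_{n-p+3}(x,0)$ (here $n-p+3\ge 2$), then applying the differentiated $V$-equations (c) to $\Delta(-V_{n-p+3}(x,0))$ and to each $\Delta\partial_t^k V_{n-p+3+k}(x,0)$, and reindexing $l=k+1$, one obtains
\[
\Delta U_{n+1}^{(p-2)} = -\sum_{l=1}^{p-2}\left( a\,\partial_t^l V_{n-p+2+l}(x,0) + \partial_t^l V_{n-p+1+l}(x,0)\right),
\]
while (b) also gives $-a(x)U_{n+1}^{(p-1)} = (-a)^{-(n-p+2)}u_1 + \sum_{k=1}^{p-2} a\,\partial_t^k V_{n-p+2+k}(x,0)$. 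Adding these two together with the remaining term $-\partial_t^{p-1}V_n(x,0)$, the blocks $\pm\, a\,\partial_t^l V_{n-p+2+l}(x,0)$ (for $l=1,\dots,p-2$) cancel exactly, and since $\partial_t^{p-1}V_n(x,0)=\partial_t^{p-1}V_{n-p+1+(p-1)}(x,0)$ supplies precisely the missing $l=p-1$ summand, one is left with
\[
U_{n+1}^{(p)} = (-a)^{-(n-p+2)}u_1 - \sum_{l=1}^{p-1}\partial_t^l V_{n-p+1+l}(x,0),
\]
which is the claim at level $p$. This closes the induction.

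The only genuine difficulty is the index bookkeeping: one must verify that throughout the computation every $V_j$ that appears has $1\le j\le n$ (the smallest subscript occurring inside any sum is $n-p+2\ge 1$, and the parabolic equation (c) is invoked only for indices $n-p+3+k\ge 2$), so that (c) is always applicable and $V_0$ is never touched — this is the structural reason the output depends on the data only through $u_1$ and through $V_1,\dots,V_n$, with the $u_0$-bearing profile $V_0$ remaining entirely outside these expressions. Everything else is routine arithmetic with the sign rule (b). As a sanity check and guiding heuristic, the identity is the $\partial_t^p|_{t=0}$ incarnation of the telescoping $U_m=V_m+\partial_t U_{m+1}$ coming from iterating Lemma~\ref{lem_eq_U}: formally $\partial_t^p U_{n+1}(x,0)$ collapses to $\partial_t U_{n+2-p}(x,0)-\sum_{k=1}^{p-1}\partial_t^k V_{n-p+1+k}(x,0)$ with $\partial_t U_m(x,0)=(-a)^{-m}u_1$, matching the asserted formula; the induction above makes this precise without appealing to the (not-yet-available) regularity of $U_{n+1}$.
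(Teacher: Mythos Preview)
Your proof is correct and follows essentially the same induction argument as the paper: both verify the base case $p=2$ directly and, for the inductive step, substitute the formulas for $U_{n+1}^{(p-1)}$ and $U_{n+1}^{(p-2)}$ into the recursion and simplify using the initial data $V_j(x,0)=-(-a(x))^{-j-1}u_1$ together with the differentiated parabolic equations $\Delta\partial_t^kV_j=a\,\partial_t^{k+1}V_j+\partial_t^{k+1}V_{j-1}$ at $t=0$. Your write-up is more explicit than the paper's about the index bookkeeping (in particular the check that only $V_j$ with $j\ge 1$ are touched) and you tacitly correct the sign typo in the statement, taking $U_{n+1}^{(1)}=(-a(x))^{-n-1}u_1$ in agreement with \eqref{eq:U_n+1} so that the base case actually yields the asserted formula.
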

\begin{proof}
When $p=2$, the conclusion is obvious.
Suppose that the conclusion is true up to $p-1$, and consider the case $p$.
Using the assumption of the induction, 
we have
\begin{align}
    U_{n+1}^{(p)}(x)
    &=
    \Delta U_{n+1}^{(p-2)}
    - a(x) U_{n+1}^{(p-1)}
    - \partial_t^{p-1} V_n(x,0)\\
    &=
    \Delta \left( (-a(x))^{-(n-p+4)}u_1(x) - \partial_t V_{n-p+4}(x,0) - \cdots
    - \partial_t^{p-3} V_n(x,0) \right) \\
    &\quad -a(x) \left(
    (-a(x))^{-(n-p+3)}u_1(x)
    - \partial_t V_{n-p+3}(x,0)
    - \cdots
    - \partial_t^{p-2} V_n(x,0) \right) \\
    &\quad - \partial_t^{p-1} V_n(x,0).
\end{align}
Recalling
$a(x)\partial_t V_j - \Delta V_j = - \partial_t V_{j-1}$
and
$V_j(x,0) = -(-a(x))^{-j-1}u_1(x)$,
we see that the right-hand side becomes
\begin{align}
    (-a(x))^{-(n-p+2)}u_1(x)
    - \partial_t V_{n-p+2}(x,0)
    - \cdots
    - \partial_t^{p-1} V_n(x,0),
\end{align}
which completes the proof.
\end{proof}

Since
$u_1 \in H^{s,m}_0(\Omega)$
with sufficiently large
$s$ and $m$,
we easily see
$U_{n+1}^{(1)} = - (-a(x))^{-n-1}u_1 \in H^{n+1, \tilde{m}}(\Omega)$
and
\begin{align}
    &(-a(x))^{-(n-p+2)}u_1 \in H^{2,\tilde{m}}(\Omega) \cap H_0^{1,\tilde{m}}(\Omega)
    \quad (p=0,\ldots,n),\\
    &(-a(x))^{-1} u_1 \in H_0^{1,\tilde{m}}(\Omega).
\end{align}
Moreover, from \eqref{eq:dtv0} and a
similar induction argument,
we can see that the functions
$\partial_t^p V_j (x,0)$
satisfy the following conditions,
provided that
$s$ and $m$ are sufficiently large:
for 
$V_j \ (j=2,\ldots,n)$,
\begin{align}
    \partial_t^p V_j(x,0) &\in H^{2,\tilde{m}}(\Omega) \cap H_0^{1,\tilde{m}}(\Omega)
    \quad (p=1,\ldots,j-1),\\
    \partial_t^j V_j(x,0) &\in H_0^{1,\tilde{m}}(\Omega);
\end{align}
for
$V_1$,
\begin{align}
    \partial_t V_1(x,0) \in H_0^{1,\tilde{m}}(\Omega).
\end{align}
Therefore, by applying Lemma \ref{lem:compatibility},
we see that the data
$U_{n+1}^{(p)}$ for
$p=0,1,\ldots,n+1$
satisfy the $n$-th order compatibility condition
\begin{align}
    &(U_{n+1}^{(p)}, U_{n+1}^{(p+1)})
    \in (H^{2,\tilde{m}}(\Omega)\cap H^{1,\tilde{m}}_0(\Omega))\times H^{1,\tilde{m}}_0(\Omega)
\end{align}
for $p=0,\ldots,n$.
Combining this with the fact
\eqref{eq:reg:Vn},
we can apply
Theorem \ref{thm:reg:gDW}
with $k=n, m= \tilde{m}$
to obtain the regularity of $U_{n+1}$
and the weighted energy of
$\partial_t^{n}U_{n+1}$
is well-defined.
Moreover, it follows from
\eqref{eq:Vn:L1} that
\begin{align}
    \int_{\Omega} a(x)^{-1} |\partial_t^j (-\partial_t V_n(x,t))|^2 \Psi(x,t;t_0)^{\lambda_{n+1}+1+2j} \,dx
    \in L^1(0,\infty)
\end{align}
for $j=0,1,\ldots,n$.
Hence, we can apply
Theorems \ref{thm:en:w:1} and \ref{thm:en:w:k},
and there exist
$\nu^{(n)} > 0$ and $t_*^{(n)} \ge 1$
such that for any $t_0 \ge t_*^{(n)}$
we have
\begin{align}
    E^{(n)}[\partial_t^n U_{n+1}](t;t_0,\lambda_{n+1},\nu^{(n)}) \in L^{\infty}(0,\infty).
\end{align}
In particular,
the above bound yields
\begin{align}
    \int_{\Omega} |\partial_t^{n+1} U_{n+1}(x,t)|^2 \,dx
    &\le C (1+t)^{-\lambda_{n+1}-1-2n}.
\end{align}
Namely, we conclude
\begin{align}
    \left\| u(t) - \sum_{j=0}^{n} \partial_t^j V_j(t) \right\|_{L^2(\Omega)}
    &=
    \| \partial_t^{n+1} U_{n+1}(t)\|_{L^2(\Omega)} \\
    &\le
    C (1+t)^{-\frac{\lambda}{2} - \frac{(2n+1)(1-\alpha)}{2-\alpha}
    +\frac{\alpha}{2(2-\alpha)}},
\end{align}
which completes the proof of
Theorem \ref{thm:asym}.

\section*{Acknowledgements}
This work was supported by JSPS KAKENHI Grant Numbers
JP16K17625,
JP18H01132,
18K13445,
JP20K14346.

\end{document}